\documentclass[preprint,3p,times,12pt]{elsarticle}

\usepackage{amssymb}
\usepackage{lipsum}
\usepackage{amsmath}
\usepackage{cool}
\usepackage{mathtools}
\usepackage{dirtytalk}
\usepackage{cuted}
\usepackage{relsize}
\usepackage[font=small]{caption}
\usepackage{subcaption}
\usepackage{ragged2e}
\usepackage{ifthen}
\usepackage[normalem]{ulem}
\usepackage{xcolor}
\usepackage{url}
\usepackage[inkscapearea=page]{svg}
\usepackage{tabularx}
\usepackage{hyperref}
\usepackage{mathrsfs}
\usepackage{enumitem}
\usepackage{multirow}
\usepackage{stmaryrd}
\usepackage{algorithm, algorithmic}
\usepackage{diagbox}

\newcommand{\e}{\mathrm{e}}

\newcommand{\upi}{\mathrm{i}}

\newcommand{\w}{\mathrm{w}}

\newcommand{\mean}[1]{\langle#1\rangle}
\newcommand{\jump}[1]{\llbracket#1\rrbracket}
\newcommand{\partialn}{\partial_{\boldsymbol{n}}}
\newcommand{\Drig}{\boldsymbol{\rm D}}
\newcommand{\X}{\mathcal{X}}
\newcommand{\Ctr}{C_{\text{tr}}}
\newcommand{\xvec}{\mathbf{x}}
\newcommand{\yvec}{\mathbf{y}}
\newcommand{\vertiii}[1]{{\left\vert\kern-0.25ex\left\vert\kern-0.25ex\left\vert #1 \right\vert\kern-0.25ex\right\vert\kern-0.25ex\right\vert}}

\usepackage{stfloats}

\usepackage{amsthm}
\newtheorem{theorem}{Theorem}
\newtheorem{assumption}{Assumption}

\newtheorem{proposition}[theorem]{Proposition}%

\newtheorem{remark}{Remark}

\newtheorem{lemma}{Lemma}

\usepackage{tikz}
\usetikzlibrary{arrows.meta}

\usepackage{lineno}

\journal{}

\newcommand{\bnabla}{\boldsymbol{\nabla}}
\newcommand{\bn}{\boldsymbol{n}}

\begin{document}

\begin{frontmatter}

\title{A discontinuous finite element method for the hydroelastic analysis of submerged structures}

\author[qut]{Zachary J. Wegert\corref{cor}}
\ead{wegert@qut.edu.au}
\author[tudelft]{Oriol Colom\'es}
\author[UNC]{Michael H. Meylan}
\author[UNC]{Ngamta Thamwattana}
\author[qut]{Vivien J. Challis}

\cortext[cor]{Corresponding author}

\affiliation[qut]{organization={School of Mathematical Sciences, Queensland University of Technology},
            city={Brisbane},
            state={QLD},
            postcode={4000}, 
            country={Australia}}

\affiliation[tudelft]{organization={Faculty of Civil Engineering and Geosciences, Delft University of Technology},
            city={Delft},
            postcode={2628 CN}, 
            country={The Netherlands}}

\affiliation[UNC]{organization={School of Computer and Information Sciences, University of Newcastle},
            city={Callaghan}, 
            state={NSW},
            postcode={2308},
            country={Australia}}

\begin{abstract}
This paper develops a finite element method for wave-structure interaction problems arising in the hydroelastic modelling of submerged elastic plates. The approach is formulated for both two-dimensional settings and three-dimensional channels. For the fluid part of the problem, we use a symmetric discontinuous Galerkin scheme that captures the discontinuity in the potential across the plate. To impose the Sommerfeld radiation conditions, we take inspiration from the Dirichlet-to-Neumann map approach and use the analytic solution in the semi-infinite domains to construct appropriate non-local boundary conditions. For the plate, we use a continuous/discontinuous Galerkin method to resolve the 4$^{\rm th}$ order operator without using continuously differentiable finite elements. We prove well-posedness of the method, showing that the resulting sesquilinear form is bounded and satisfies the G{\aa}rding inequality, leading to a unique and stable solution. We implement our formulation as an open-source tool using the Gridap package in Julia. We validate our method against a hypersingular boundary integral method, finding excellent agreement. The flexibility of the methodology is demonstrated through simulations of a two-dimensional slanted plate and three-dimensional plate geometries including centred rectangular plates, offset rectangular plates, and annulus-shaped plates. The finite element method presented here can be readily applied to model wave energy conversion of piezoelectric bimorphs. Furthermore, the method could easily be extended to consider wave scattering from fixed or rigid structures or from variable bottom topography, and is not limited to hydroelastic applications.
\end{abstract}

\begin{keyword}
Hydroelasticity, discontinuous finite elements, wave-structure interactions, wave energy conversion
\end{keyword}

\end{frontmatter}


\section{Introduction}\label{sec: Introduction}\noindent

The problem of linear water-wave scattering has numerous applications. Standard methods, as implemented in commercial software such as WAMIT \cite{LeeNewman2006}, are based primarily on Green's functions and boundary integral methods. It has long been recognised that the finite element method (FEM) can also be applied to such scattering problems, although this approach has been much less extensively developed. The FEM has several distinct advantages, including access to highly developed computational techniques and considerable flexibility in accommodating different physical state equations and geometries. These features make the FEM particularly well-suited to problems involving thin elastic plates, especially those arising in modelling proposed devices for ocean wave energy harvesting.

Ocean waves provide an abundant source of renewable energy that can be harnessed using devices known as wave energy converters (WECs). The development of WECs faces several challenges, including the need for devices that are durable enough to withstand harsh marine environments while maintaining high power take-off efficiency over long periods of operation. Consequently, considerable research has focused on the design of WECs, including their structural configurations, material selection, and operating principles. We refer the reader to \cite{ZHOU2026} for a comprehensive review of WEC technologies over the past two decades, including emerging challenges and future directions.  
In this paper, we develop an FEM approach that can be used to model submerged plate WECs, in which pressure differences and wave-induced flow around the submerged plate generate oscillatory motion or hydraulic effects for energy conversion. As mentioned in a  state-of-the-art review of submerged WECs \cite{ROACH2025}, a submerged plate is preferred  because it experiences lower wave loads, smoother water motion, and improved durability compared to a surface plate. This improves energy conversion efficiency while reducing structural stress and maintenance.  

    
     
Submerged plate WECs have been widely studied experimentally  \cite{Polly_etal2025,ORER2007} and mathematically \cite{HAYATDAVOODI2015,V20221382,Wegert_Wilks_Thamwattana_Challis_Koley_Meylan_2026}.   
In terms of  numerical modelling, most research on submerged plate WECs  employs the finite volume method (FVM) (e.g., ANSYS Fluent) \cite{jmse13081564,Thum_etal2024,Seibt_etal2023,Seibt_etal2014}, boundary element method (BEM) \cite{Panduranga_etal2023,CHEN2017,JIN2026}, or smoothed particle hydrodynamics (SPH) \cite{He_etal2018,HE2019,SPH_ref}.  In contrast, relatively few studies have applied the FEM directly to submerged plate WECs. Most of these studies are based on commercial software packages (e.g., COMSOL) or coupled CFD–FEM approaches \cite{Yang_etal2020,HUANG2022}. Therefore, this paper develops a dedicated FEM implementation relevant to submerged plate WECs. The development and implementation of our own FEM framework provides greater flexibility to incorporate different structural configurations, material properties, and numerical techniques, and will facilitate future improvement of WEC designs. 


Based on the FEM studies presented in \cite{OriolHydroelasticity,Colomes_preprint,Agarwal_etal2024}, this paper  develops a finite element formulation to solve the hydroelastic problem of a submerged plate described in \cite{Wegert_Wilks_Thamwattana_Challis_Koley_Meylan_2026}, neglecting piezoelectric effects.  
In   \cite{OriolHydroelasticity}, Colom\'{e}s et al. introduced  a monolithic finite element formulation that couples linear potential flow with structural models into a single system of equations, providing a stable and efficient approach for analysing wave–structure interactions in very large floating structures. Building on  \cite{OriolHydroelasticity}, Pradnyanata and Colom\'{e}s \cite{Colomes_preprint} extended  the formulation to include moored and hinged floating structures using Mindlin–Timoshenko beam theory, allowing accurate modelling of shear deformation, rotary inertia, mooring systems, and power take-off mechanisms for wave energy applications within a unified FEM framework. In addition, Agarwal et al.  \cite{Agarwal_etal2024} expanded  the method to investigate the dynamic behaviour of viscoelastic floating membranes, demonstrating the ability of monolithic FEM to capture the coupled effects of fluid loading, structural deformation, and material viscoelasticity. The ability of this approach to handle non-homogeneous structural properties was also demonstrated in \cite{el2026adjoint}. We note that these studies apply the monolithic finite element approach to hydroelastic problems involving floating structures, while this paper particularly formulates the FEM framework for submerged plates. The proposed formulation is directly relevant to modelling submerged plate piezoelectric WEC. Furthermore, the framework can be extended and adapted in several ways, including to the case of non-uniform ocean depth, non-homogeneous structures, and non-linear bending. 

  

    In the following section, mathematical models for the hydroelasticity problem of the submerged plate in a two-dimensional fluid and the three-dimensional channel are described that form the basis for the finite element formulations presented in Section~\ref{sec: FE formulation}. The numerical analysis for our finite element scheme is provided in Section~\ref{sec: analysis}. In Section~\ref{sec: results}, implementation and validation of the formulation shown in Section~\ref{sec: model} are given for submerged horizontal plate, slanted plates and three-dimensional channel. Finally, conclusions are given in Section~\ref{sec: Conclusions}.




\section{Problem setup}\label{sec: model}\noindent
This section describes the model, assumptions, and imposition of radiation conditions in finite domains.

\subsection{Hydroelastic model}\noindent
In the following, we consider the development of a finite element method to solve the hydroelastic model described by \citet{Wegert_Wilks_Thamwattana_Challis_Koley_Meylan_2026}, neglecting the piezoelectric effect. This describes the two-dimensional wave-structure interaction between the ocean and large submerged structures. We also consider the extension to the case of a three-dimensional channel. We note that the formulation readily extends to the hydroelectromechanical modelling of piezoelectric WECs discussed in \citep{Wegert_Wilks_Thamwattana_Challis_Koley_Meylan_2026} by including complex-valued rigidity coefficients, although we consider this to be future work.

\subsubsection{Two-dimensional model}\noindent
In two dimensions, the hydroelastic model approximates the bending of a one-dimensional plate $\Gamma$ in a two-dimensional fluid $\Omega$ that is inviscid, incompressible, and irrotational. We assume that the motion is time harmonic with an angular frequency $\omega$, thus posing  the problem in the frequency domain and reducing computational complexity. Figure~\ref{fig:schematic} shows a diagram of the geometric setup.
\begin{figure}[h]
\begin{center}
\begin{tikzpicture}[scale=2.3]

\def\waveAmp{0.15}
\def\waveEnd{0.75}
\def\freq{480}

\draw[dashed] (0,1) -- (0,3);
\draw[dashed] (6,1) -- (6,3);
\draw[dashed] (1.5,1) -- (1.5,3);
\draw[dashed] (4.5,1) -- (4.5,3);
\draw[dashed] (0,2) -- (6,2);
\draw[line width=0.5mm] (0,1) -- (6,1) node[above,xshift = -165] {$\partialn\phi = 0$ on $\Gamma_{\mathrm{b}}$};
\draw[line width=1mm, black] (1.5,2) -- (4.5,2) node[below,xshift = -85] {$D \partial_x^4 \w - I_b \omega^2 \: \w = \upi\omega\rho_w\llbracket\phi\rrbracket $ on $\Gamma$};
\node[text width=3cm] at (6.8,3.02) {$z = 0$};
\node[text width=3cm] at (6.8,2.02) {$z = -h$};
\node[text width=3cm] at (6.8,1.02) {$z = -H$};

\draw[line width=0.5mm, solid, blue] 
    plot[domain=0:\waveEnd, samples=50] (\x, {3 + \waveAmp*sin(\x*\freq)}) 
    -- (6,3);

\draw[dashed, blue!50, thin] (0,3) -- (6,3);

\node[text width=1cm] at (0.3,3.3) {$\phi^{\rm in}$};

\node[text width=3cm] at (1.9,3.2) {$\partialn\phi=\frac{\omega^2}{g} \phi$ on $\Gamma_{\mathrm{fs}}$};

\draw[line width=0.5mm, black,->] (3,3) -- (3,3.5) node[right] {$z$};
\draw[line width=0.5mm, black,->] (5,3) -- (5,3.25) node[right] {$\boldsymbol{n}$};
\draw[line width=0.5mm, black,->] (3,3) -- (3.5,3) node[above] {$x$};
\draw[line width=0.5mm, black,->] (6,2.5) -- (6.25,2.5) node[above] {$\boldsymbol{n}$};
\draw[line width=0.5mm, black,->] (0,2.5) -- (-0.25,2.5) node[above] {$\boldsymbol{n}$};
\draw[line width=0.5mm, black,->] (4.5,2) -- (4.75,2) node[below] {$\boldsymbol{n}_\Gamma$};
\draw[line width=0.5mm, black,->] (4.3,2) -- (4.3,2.25) node[right] {$\boldsymbol{n}$};
\draw[line width=0.5mm, black,->] (1.5,2) -- (1.25,2) node[below] {$\boldsymbol{n}_\Gamma$};
\node[text width=3cm] at (0.9,1.5){$\displaystyle{-\Delta\phi = 0}$ in $\Omega$};
\node[text width=6cm] at (3.75,2.2){${\partialn\phi }+\upi\omega \w=0$ on $\Gamma$};
\node[text width=2cm] at (1.65,0.8){$x=-L$};
\node[text width=2cm] at (4.75,0.8){$x=L$};
\node[text width=2cm] at (0.25,0.8){$x=-L_f$};
\node[text width=2cm] at (6.25,0.8){$x=L_f$};
\node[text width=1cm] at (0.3,2.3){$\Gamma_{\mathrm{in}}$};
\node[text width=1cm] at (5.9,2.3){$\Gamma_{\mathrm{out}}$};
\end{tikzpicture}
\end{center}
\caption{Visualisation of the submerged plate hydroelasticity problem based on \citet{Wegert_Wilks_Thamwattana_Challis_Koley_Meylan_2026}. The finite-domain case is visualised. The free surface is denoted by $\Gamma_\mathrm{fs}$ and the ocean floor by $\Gamma_\mathrm{b}$. Note that $\boldsymbol{n}_\Gamma$ denotes the normal along the plane $\Gamma$.}
\label{fig:schematic}
\end{figure}
The model is given by the following equations \citep{Wegert_Wilks_Thamwattana_Challis_Koley_Meylan_2026}:

\begin{align}
-\Delta\phi=&\,0\quad \text{ in }\Omega,\label{eqn: CS Omega}\\
\partialn\phi=&\,0\quad \text{ on }\Gamma_{\mathrm{b}},\label{eqn: CS Gamma_b}\\
\partialn\phi-\alpha\phi =&\, 0\quad \text{ on }\Gamma_{\mathrm{fs}},\label{eqn: CS Gamma_fs}\\
\partialn\phi^{\rm sc}-\upi k_0\phi^{\rm sc}\rightarrow &\, 0\quad \text{ as }\lvert x\rvert\rightarrow\infty\label{eqn: CS Sommerfeld},
\\
{\partialn \phi}+\upi \omega \w=&\,0\quad \text{ on }\Gamma,\label{eqn: CS Gamma}\\
D\Delta^2\w - I_0 \omega^2\w - \upi\omega\rho_w\jump{\phi} =&\, 0\quad \text{ on }\Gamma.\label{eqn: CS Gamma plate}
\end{align}
In the above, \eqref{eqn: CS Omega}--\eqref{eqn: CS Sommerfeld} are the equations for the fluid and \eqref{eqn: CS Gamma} and \eqref{eqn: CS Gamma plate} are the kinematic and dynamic conditions on the plate. In addition to these,  $\w$ is  subject to either free boundary conditions:
\begin{equation}\label{eqn:free}
D\Delta^2\w = \bnabla(D\Delta^2\w)\cdot\boldsymbol{n}_\Gamma = 0\quad\text{on }\partial\Gamma,
\end{equation}
clamped boundary conditions:
\begin{equation}\label{eqn:clamped}
 \w = \partial_{\bn_{\Gamma}}\w = 0\quad\text{on }\partial\Gamma,
\end{equation}
or simply-supported boundary conditions:
\begin{equation}\label{eqn:simply-supported}
 \w = D\Delta^2\w = 0\quad\text{on }\partial\Gamma.
\end{equation}
In the above, $\Delta$ is the Laplacian operator, $\phi$ is the complex-valued fluid potential, $\boldsymbol{n}$ is the outward-facing unit normal to $\partial\Omega$, $\alpha=\frac{\omega^2}{g}$ is the dispersion coefficient for gravity $g$ and the given frequency $\omega$, $\w$ is the complex-valued out-of-plane plate displacement, $D$ is the real- or complex-valued bending rigidity, $I_0$ is the mass per unit length, $\rho_w$ is the density of the fluid, $\Delta^2$ is the bilaplacian operator, $\boldsymbol{n}_\Gamma$ denotes the normal along the plane $\Gamma$, $\jump{v}$ is the jump operator defined as $\jump{v}=v_1-v_2$, and $\mean{v}$ is the mean operator defined as $\mean{v}=\frac{1}{2}(v_1+v_2)$. To ensure that the sign of $\jump{\phi}$ is correct in \eqref{eqn: CS Gamma plate}, on $\Gamma$ we take $v_1$ and $v_2$ in $\jump{v}$ to be the values of $v$ below and above $\Gamma$, respectively. Finally, $\phi^{\rm sc}=\phi - \phi^{\rm in}$ is the scattered wave potential where 
\begin{equation}\label{eqn:plane_inc}
\phi^{\rm in}(x,z)=\beta\mathrm{e}^{\upi k_0 x}\frac{\cosh k_0(z+H)}{\cosh kH}
\end{equation}
denotes the velocity potential of a plane incident wave with scaling $\beta=\frac{-igA}{\omega}$ where $A$ is the amplitude; and $k_0$ is the real and positive solution of the dispersion equation
\begin{equation}
    k_0\tanh(k_0 H)=\frac{\omega^2}{g}.
\end{equation}
We refer the reader to \citet{Wegert_Wilks_Thamwattana_Challis_Koley_Meylan_2026} for a detailed discussion of the above model and its assumptions, and \citet{Linton2001} for a discussion of linear water wave theory.

\subsubsection{Three-dimensional channel model}\label{sec: three dim channel}\noindent
In Figure~\ref{fig:schematic_3d}, we show an example of a three-dimensional channel with a rectangular plate centred in the fluid with a plane incident wave.
\begin{figure}[t]
    \begin{center}
    \begin{tikzpicture}[
    x={(0.866cm, 0.5cm)}, 
    y={(-0.866cm, 0.5cm)}, 
    z={(0cm, 1cm)},
    scale=0.85
]

\def\L{12} 
\def\W{6} 
\def\H{2.5} 
\def\midZ{1.25} 

\def\Sx{3} 
\def\Sy{2} 
\def\offsetX{\L/2 - \Sx/2}
\def\offsetY{\W/2 - \Sy/2}
\def\waveEnd{\L/6}

\draw (0,0,0) -- (\L,0,0) -- (\L,0,\H) -- (\waveEnd,0,\H); 
\draw (0,0,0) -- (0,\W,0) -- (0,\W,\H) -- (0,0,\H) -- cycle; 

\filldraw[fill=gray!20, draw=black, thick] 
    (\offsetX, \offsetY, \midZ) -- 
    (\offsetX + \Sx, \offsetY, \midZ) -- 
    (\offsetX + \Sx, \offsetY + \Sy, \midZ) -- 
    (\offsetX, \offsetY + \Sy, \midZ) -- cycle;

\draw[dashed, draw=black!40] (0,0,\midZ) -- (\L,0,\midZ) -- (\L,\W,\midZ) -- (0,\W,\midZ) -- cycle;

\def\waveAmp{0.5}
\def\freq{180} 

\foreach \x in {0, 0.1, ..., 2} {
    \pgfmathsetmacro{\xnext}{\x + 0.1}
    \ifdim \xnext pt > 2pt \def\xnext{2} \fi 
    
    \pgfmathsetmacro{\zcurr}{\H + \waveAmp*sin(\x*\freq)}
    \pgfmathsetmacro{\znext}{\H + \waveAmp*sin(\xnext*\freq)}
    
    \fill[blue!20, opacity=0.7] 
        (\x, 0, \zcurr) -- (\xnext, 0, \znext) -- 
        (\xnext, \W, \znext) -- (\x, \W, \zcurr) -- cycle;
        
    \draw[blue!80!black, thin] (\x, 0, \zcurr) -- (\xnext, 0, \znext);
    \draw[blue!80!black, thin] (\x, \W, \zcurr) -- (\xnext, \W, \znext);
}

\fill[blue!30, opacity=0.15, draw=black] 
    (\waveEnd, 0, \H) -- (\L, 0, \H) -- (\L, \W, \H) -- (\waveEnd, \W, \H) -- cycle;

\draw[thick] (\waveEnd, \W, \H) -- (\L, \W, \H);
\draw[thick] (\L, 0, \H) -- (\L, \W, \H);

\draw[dashed, thick] (\L,0,0) -- (\L,\W,0);
\draw[dashed, thick] (0,\W,0) -- (\L,\W,0);
\draw[dashed, thick] (\L,\W,0) -- (\L,\W,\H);

\draw[dashed, draw=black!40, thick] (0,\W/2+\Sy/2,\midZ) -- (\L,\W/2+\Sy/2,\midZ);
\draw[dashed, draw=black!40, thick] (0,\W/2-\Sy/2,\midZ) -- (\L,\W/2-\Sy/2,\midZ);
\draw[dashed, draw=black!40, thick] (\L/2-\Sx/2,0,\midZ) -- (\L/2-\Sx/2,\W,\midZ);
\draw[dashed, draw=black!40, thick] (\L/2+\Sx/2,0,\midZ) -- (\L/2+\Sx/2,\W,\midZ);

\node at (0, \W/2, \H/2) [left, xshift=-0.2cm] {$\Gamma_{\rm in}$};
\node at (\L - 2, \W, \H-1) [left] {$\Gamma_{\rm fs}$};
\node at (\L, \W/2, \H/2) [right, xshift=0.2cm] {$\Gamma_{\rm out}$};
\node at (\L/2, 0, \H/2) [below right, yshift=0.1cm] {$\Gamma_{\rm side}$};
\node at (1, \W, \H + \waveAmp) [above] {$\phi^{\rm in}$};
\node at (\offsetX + \Sx/2 - 0.25, \offsetY + \Sy/2-0.25, \midZ) [above] {$\Gamma$};

\draw[<->, >=Stealth, yshift=-0.25cm, xshift = 0.25cm] (0,0,0) -- (\L,0,0) node[midway, below,yshift=0.1cm, xshift = 0.3cm] { $2L_f$};
\draw[<->, >=Stealth, xshift=-0.25cm, yshift=-0.25cm] (0,0,0) -- (0,\W,0) node[midway, left, xshift=-0.1cm, yshift=-0.1cm] { $2B_f$};
\draw[<->, >=Stealth, xshift=0.25cm] (\L,0,\H) -- (\L,0,\H/2) node[midway, right] { $h$};
\draw[<->, >=Stealth, xshift=-0.25cm] (0,\W,\H) -- (0,\W,0) node[midway, left] { $H$};
\draw[<->, >=Stealth, yshift=0.125cm, xshift=-0.125cm] (\offsetX, \offsetY + \Sy, \midZ) -- (\offsetX + \Sx, \offsetY + \Sy, \midZ) node[midway, above,yshift=-0.05cm] { $2L$};
\draw[<->, >=Stealth, xshift=0.125cm, yshift=0.125cm] (\offsetX + \Sx, \offsetY, \midZ) -- (\offsetX + \Sx, \offsetY + \Sy, \midZ) node[midway, above, xshift=0.15cm] { $2B$};

\begin{scope}[shift={(3, \W+2, \H)}]
    \draw[->, >=Stealth, thick] (0,0,0) -- (1.5,0,0) node[right] {$x$};
    \draw[->, >=Stealth, thick] (0,0,0) -- (0,1.5,0) node[above left] {$y$};
    \draw[->, >=Stealth, thick] (0,0,0) -- (0,0,1.5) node[above] {$z$};
\end{scope}

\end{tikzpicture}
    \end{center}
    \caption{Visualisation of an extension of the submerged plate hydroelasticity problem to a three-dimensional channel with a plane incident wave. Note that our formulation readily allows for a non-plane incident wave.}
    \label{fig:schematic_3d}
\end{figure}
For these cases, the submerged plate can be modelled as a thin two-dimensional Poisson–Kirchhoff plate \citep{OriolHydroelasticity}. In this case, \eqref{eqn: CS Gamma plate} becomes
    \begin{equation}
        \bnabla^2:(\Drig:\bnabla^2\w) - I_0 \omega^2\w - \upi\omega\rho_w\jump{\phi} =\, 0\quad \text{ on }\Gamma,\label{eqn: CS Gamma plate 2d}
    \end{equation}
    where $\bnabla^2$ is the Hessian operator and $\Drig$ is a symmetric rigidity tensor of order 4. Note that when the normal of the plate is aligned with the $z$-coordinate direction, all $z$ components of $\Drig$ vanish. In this case, typical boundary conditions on $\w$ take the following form:
\begin{equation}\label{eqn:free 3d}
(\Drig:\bnabla^2\w)\cdot\boldsymbol{n}_{\Gamma} = (\bnabla\cdot(\Drig:\bnabla^2\w))\cdot\boldsymbol{n}_\Gamma = 0\quad\text{on }\partial\Gamma
\end{equation}
for free boundary conditions;
\begin{equation}\label{eqn:clamped 3d}
 \w = \partial_{\boldsymbol{n}_{\Gamma}}\w = 0\quad\text{on }\partial\Gamma
\end{equation}
for clamped boundary conditions; and
\begin{equation}\label{eqn:simply-supported 3d}
 \w = (\Drig:\bnabla^2\w)\cdot\boldsymbol{n}_{\Gamma}= 0\quad\text{on }\partial\Gamma
\end{equation}
for simply-supported boundary conditions. 
For the sides of the channel, $\Gamma_{\rm side}$, we also impose slip boundary conditions on the fluid, i.e.,
\begin{equation}
    \partialn\phi=0\quad\text{on }\Gamma_{\rm side}.
\end{equation}
Finally, the incident wave becomes
\begin{equation}
    \phi^{\rm in}(x,y,z)=\beta\e^{ik_{0p}x}\frac{\cosh k_0(z+H)}{\cosh kH}\cos\left(\frac{p\pi (y-B_f)}{2B_f}\right),
\end{equation}
where $p$ is the frequency in the $y$-direction and $k_{0p}$ is given by
\begin{equation}
    k_{0p} = \sqrt{k_0^2 - \left(\frac{p\pi}{2B_f}\right)^2}.
\end{equation}
Note that in the case of a plane wave ($p=0$), $\phi^{\rm in}$ reduces to \eqref{eqn:plane_inc}.

\section{Finite element formulation}\label{sec: FE formulation}\noindent
In the following, we derive the finite element formulation for the submerged plate hydroelastic model.

\subsection{Notation}\noindent
Suppose that we partition $\Omega\subset\mathbb{R}^d$ into patches $\Omega_e$ and denote the dimension $d-1$ boundaries of $\Omega_e$ by $\mathcal{E}$. We define $\Gamma=\cup_e\Gamma_e$ where $\Gamma_e\in\mathcal{E}$. Next, we define the skeleton of $\Omega$ and $\Gamma$ as $\Lambda = (\bigcup_e \partial\Omega_e) \setminus \partial\Omega$ and $\Sigma = (\bigcup_e \partial\Gamma_e) \setminus \partial\Gamma$, respectively. On $\Lambda$, we define the operators $\jump{v}=v_1-v_2$ and $\mean{v}=\frac{1}{2}(v_1+v_2)$ where $v_1$ and $v_2$ are the restriction of $v\in V$ to cells $\Omega_{e_1}$ and $\Omega_{e_2}$ that share an interior facet $F\in\Lambda$, similarly on $\Sigma$. Note that in $\Gamma$ we take $v_1$ and $v_2$ in $\jump{v}$ to be the values of $v$ below and above $\Gamma$, respectively.

We also define the notation $(u,v)_\Omega=\int_{\Omega}u\overline{v}~\mathrm{d}\Omega$ and $(u,v)_\Gamma=\int_{\Gamma}u\overline{v}~\mathrm{d}\Gamma$ with associated norms $\lVert u\rVert_{L^2(\Omega;\mathbb{C})}\equiv\lVert u\rVert_{\Omega}=(u,u)_{\Omega}^{1/2}$ and $\lVert u\rVert_{L^2(\Gamma;\mathbb{C})}\equiv\lVert u\rVert_{\Gamma}=(u,u)_{\Gamma}^{1/2}$, respectively. Finally, we let ${\mathcal{V}}=\{v\in L^2(\Omega;\mathbb{C}):v\vert_{\Omega_e}\in H^1(\Omega_e;\mathbb{C})~\forall \Omega_e\in\Omega\}$ and ${\mathcal{V}}_\Gamma=H^2(\Gamma;\mathbb{C})$ be function spaces. Note that $V$ is a so-called \textit{broken space} \citep{oden_discontinuoushpfinite_1998}.

\subsection{Fluid}\noindent
In the following, we derive the weak formulation for the hydroelastic problem given by \eqref{eqn: CS Omega}-\eqref{eqn: CS Sommerfeld}. We proceed by multiplying \eqref{eqn: CS Omega} by a test function $v$ and integrating by parts over each partition of $\Omega$. This gives
\begin{align}
    0 &= (\bnabla\phi,\bnabla v)_{\Omega} - \sum_{\Omega_e\in\Omega}(\partialn\phi,v)_{\partial\Omega_e} \nonumber\\  
    &= (\bnabla\phi,\bnabla v)_{\Omega} - (v_1,\partial_{\boldsymbol{n}_1}\phi_1)_{\Lambda} - (v_2,\partial_{\boldsymbol{n}_2}\phi_2)_{\Lambda} - (\partialn\phi,v)_{\partial\Omega}.
\end{align}
It can be shown by definition of the jump and mean operators that $\jump{ab}=\jump{a}\mean{b} + \mean{a}\jump{b} $. Using this identity and separating $\Lambda$ into $\Gamma$ and $\Lambda\setminus\Gamma$ gives
\begin{equation}
    0 = (\bnabla\phi,\bnabla v)_{\Omega} - (\partialn\phi,v)_{\partial\Omega} - (\mean{\partialn\phi},\jump{v})_{\Lambda\setminus\Gamma} - (\jump{\partialn\phi},\mean{v})_{\Lambda\setminus\Gamma} - (v_1,\partial_{\boldsymbol{n}}\phi_1)_{\Gamma} + (v_2,\partial_{\boldsymbol{n}}\phi_2)_{\Gamma},
\end{equation}
where we have used the normal $\boldsymbol{n}$ on $\Gamma$ as shown in Figure~\ref{fig:schematic}. Next, applying \eqref{eqn: CS Gamma} and using the definition of the jump on $\Gamma$, we can write
\begin{equation}
    0 = (\bnabla\phi,\bnabla v)_{\Omega} - (\partialn\phi,v)_{\partial\Omega} - (\mean{\partialn\phi},\jump{v})_{\Lambda\setminus\Gamma} - (\jump{\partialn\phi},\mean{v})_{\Lambda\setminus\Gamma} + (i\omega\w,\jump{v})_{\Gamma},\label{eqn: wf intermediate 1}
\end{equation}
Requiring that $\partialn\phi$ be continuous, we have that
\begin{align}
    (\jump{\partialn\phi},\mean{v})_{\Lambda\setminus\Gamma}&=0.\label{eqn: DG term vanish}
\end{align}
As a result of \eqref{eqn: DG term vanish}, the second-to-last term in \eqref{eqn: wf intermediate 1} vanishes. Finally, we enforce the continuity of $\phi$ using a symmetric interior penalty method \citep{doi:10.1137/0719052}. This method relies on adding a symmetry term $(\mean{\partialn v},\jump{\phi})_{\Lambda\setminus\Gamma}$ and a penalty term $(\mu_1\jump{\phi},\jump{v})_{\Lambda\setminus\Gamma}$ to the variational form, where $\mu_1$ is a penalty parameter. These are consistent additions in the sense that $\jump{\phi}=0$ in the case of $\phi$ being a solution to \eqref{eqn: CS Omega}--\eqref{eqn: CS Sommerfeld}. This results in:
\begin{equation}
    0 = (\bnabla\phi,\bnabla v)_{\Omega} - (\partialn\phi,v)_{\partial\Omega} - (\mean{\partialn\phi},\jump{v})_{\Lambda\setminus\Gamma} - (\mean{\partialn v},\jump{\phi})_{\Lambda\setminus\Gamma} + (\mu_1\jump{\phi},\jump{v})_{\Lambda\setminus\Gamma} + (i\omega\w,\jump{v})_{\Gamma}.
\end{equation}
Using the boundary conditions given by \eqref{eqn: CS Gamma_b} and \eqref{eqn: CS Gamma_fs}, we can write the above as
\begin{equation}\label{eqn: intermediate pre SRC}
        0 = (\bnabla\phi,\bnabla v)_{\Omega} - (\alpha\phi,v)_{\Gamma_{\rm fs}} - (\partial_{\boldsymbol{n}}\phi,v)_{\Gamma_{\rm in}\cup\Gamma_{\rm out}} - (\mean{\partialn\phi},\jump{v})_{\Lambda\setminus\Gamma} - (\mean{\partialn v},\jump{\phi})_{\Lambda\setminus\Gamma} + (\mu_1\jump{\phi},\jump{v})_{\Lambda\setminus\Gamma} + (i\omega\w,\jump{v})_{\Gamma}.
\end{equation}
We now consider a method to impose the Sommerfeld radiation condition.

\subsubsection{Imposing Sommerfeld radiation conditions in two dimensions}\label{sec: sommerfeld 2d}\noindent
To apply the Sommerfeld radiation conditions in a finite domain, we take inspiration from the Dirichlet-to-Neumann map approach and use the solution in the semi-infinite domains $x\in(-\infty,-L_f)$ and $x\in(L_f,\infty)$ to construct the necessary boundary conditions at $x=\pm L_f$. This introduces a non-local integral operator into the formulation. In the following, we formulate the boundary conditions in two dimensions.

We start with the eigenfunction expansion solution for $|x|>L_f$. This and its derivative in $x$ are given by 
\begin{equation}\label{eqn: phi solution outside}
    \phi(x,z) =\begin{cases}
        \displaystyle\beta\psi_0(z)e^{\upi k_0x} +\sum_{m=0}^{\infty} R_m\psi_m(z)e^{-\upi k_mx},& x\leq -L_f,\\
        \displaystyle\sum_{m=0}^{\infty} T_m\psi_m(z)e^{\upi k_mx},& x \geq L_f,
    \end{cases} 
\end{equation}
\begin{equation}\label{eqn: D_xphi solution outside}
    \partial_x\phi(x,z) =\begin{cases}
        \displaystyle\upi k_0\beta\psi_0(z)e^{\upi k_0x} -\sum_{m=0}^{\infty} \upi k_mR_m\psi_m(z)e^{-\upi k_mx},& x\leq -L_f,\\
        \displaystyle\sum_{m=0}^{\infty} \upi k_mT_m\psi_m(z)e^{\upi k_mx},& x \geq L_f,
    \end{cases} 
\end{equation}
where $k_m$ are solutions of the dispersion relation $\alpha=k_m\tanh(k_mH)$ ordered with increasing magnitude and with $k_m$ being positive imaginary for $m>0$, $\psi_m(z)=\frac{\cosh{k_m(H+z)}}{\cosh{k_mH}}$ are the eigenfunctions, and $R_m$ and $T_m$ are the eigenfunction coefficients. Note that $\psi_m$ satisfies the orthogonality condition
\begin{equation}\label{eqn: orthog psi}
    \int_{-H}^0\psi_m(z)\psi_n(z)~\mathrm{d}z=\delta_{mn}C_{m}.
\end{equation}
At $x=-L_f$ and $x=L_f$, multiplying \eqref{eqn: phi solution outside} by $\psi_n$, integrating, applying orthogonality, and rearranging gives
\begin{equation}\label{eqn: reflection 2d}
    R_n=\begin{cases}
        \displaystyle-\beta e^{-2\upi k_0L_f} + \frac{e^{-\upi k_0L_f}}{C_0}\int_{-H}^0\phi(-L_f,z)\psi_0(z)~\mathrm{d}z,&n=0,\\
        \displaystyle\frac{e^{-\upi k_nL_f}}{C_n}\int_{-H}^0\phi(-L_f,z)\psi_n(z)~\mathrm{d}z,&n>0,
    \end{cases}
\end{equation}
and
\begin{equation}\label{eqn: transmission 2d}
    T_n=\frac{e^{-\upi k_nL}}{C_n}\int_{-H}^0\phi(L_f,z)\psi_n(z)~\mathrm{d}z.
\end{equation}
As a result, \eqref{eqn: D_xphi solution outside} in terms of $\Gamma_{\rm in}$ and $\Gamma_{\rm out}$ can be written as
\begin{align}
    \partial_{\boldsymbol{n}}\phi &= -2\upi k_0\beta\psi_0(z)e^{-\upi k_0L_f} + \sum_{m=0}^{\infty} \frac{\upi k_m\psi_m(z)}{C_m}\int_{\Gamma_{\rm in}}\phi(x,z)\psi_m(z)~\mathrm{d}s,&&\text{on }\Gamma_{\rm in},\\
    \partial_{\boldsymbol{n}}\phi &= \sum_{m=0}^{\infty} \frac{\upi k_m\psi_m(z)}{C_m}\int_{\Gamma_{\rm out}}\phi(x,z)\psi_m(z)~\mathrm{d}s,&&\text{on }\Gamma_{\rm out},
\end{align}
where $\boldsymbol{n}$ is the outward normal and $\Gamma_{\rm in}$ and $\Gamma_{\rm out}$ are as shown in Figure~\ref{fig:schematic}. Truncating the sums at $M$, we may use these in \eqref{eqn: intermediate pre SRC} to construct the operators
\begin{equation}\label{eqn: bilinear form fluid b}
    \begin{aligned}
        B_f([\phi,\w],[v,r]) =&~ (\bnabla\phi,\bnabla v)_{\Omega} - (\alpha\phi,v)_{\Gamma_{\rm fs}} - K(\phi,v) \\&- (\mean{\partialn\phi},\jump{v})_{\Lambda\setminus\Gamma} - (\mean{\partialn v},\jump{\phi})_{\Lambda\setminus\Gamma} + (\mu_1\jump{\phi},\jump{v})_{\Lambda\setminus\Gamma} + (i\omega\w,\jump{v})_{\Gamma},
    \end{aligned}
\end{equation}
and
\begin{equation}\label{eqn: bilinear form fluid L}
    L(v) = (-2\upi k_0\beta\psi_0(z)e^{-\upi k_0L},v)_{\Gamma_{\rm in}}=(-2\upi k_0\phi^{\rm in},v)_{\Gamma_{\rm in}},
\end{equation}
where $K(\phi,v)$ is given by
\begin{equation}\label{eqn: K op 2d}
    K(\phi,v) = \sum_{m=0}^{M}\frac{\upi k_m}{C_m}\left[(v,\psi_m)_{\Gamma_{\rm in}}(\phi,\psi_m)_{\Gamma_{\rm in}} + (v,\psi_m)_{\Gamma_{\rm out}}(\phi,\psi_m)_{\Gamma_{\rm out}}\right].
\end{equation}
It is important to note that $K$ is a non-local operator. Once the problem is posed in a finite function space, the term can be discretised by recognising that the resulting sparse matrix can be written as the outer product of two sparse vectors. Note that we use the term sparse loosely --- the vectors and matrix are dense but only for degrees of freedom on the boundaries $\Gamma_{\rm in}$ and $\Gamma_{\rm out}$. This number of degrees of freedom is usually much less than the total number of degrees of freedom in the problem. Should the discretisation of $K$ require a large amount of memory, matrix-free methods can be utilised to avoid constructing $K$ entirely.

\subsubsection{Imposing Sommerfeld radiation conditions for the three-dimensional channel}\label{sec: sommerfeld 3d}\noindent
We now consider imposing the Sommerfeld radiation conditions as integral operators for the case of a three-dimensional channel. 

The eigenfunction expansion solution for outside $x\in(-L_f,L_f)$ is given by 
\begin{equation}\label{eqn: phi solution outside 3d}
    \phi(x,y,z) =
    \begin{cases}
        \displaystyle\beta\psi_0(z)e^{ik_{0p}x}\chi_p(y)+\sum_{n=0}^{\infty}\sum_{m=0}^{\infty} R_{mn}\psi_m(z)\chi_n(y)e^{-ik_{mn}x},& x\leq -L_f,\\
        \displaystyle\sum_{n=0}^{\infty}\sum_{m=0}^{\infty} T_{mn}\psi_m(z)\chi_n(y)e^{ik_{mn}x},& x \geq L_f,
    \end{cases}
\end{equation}
where $p$ specifies the frequency of the incident wave in $y$ ($p=0$ is a plane wave), $k_{mn}$ are given by
\begin{equation}\label{eqn: kmn 3d}
    k_{mn} = \sqrt{k_m^2 - \left(\frac{n\pi}{2B_f}\right)^2},
\end{equation}
with $k_m$ satisfying the dispersion relation $\alpha=k_m\tanh(k_mH)$ ordered with increasing magnitude and with $k_m$ being positive imaginary for $m>0$, $\psi_m(z)=\frac{\cosh{k_m(H+z)}}{\cosh{k_mH}}$ are the eigenfunctions in $z$, $\chi_n(y)=\cos\left(\frac{n\pi (y-B_f)}{2B_f}\right)$ are the eigenfunctions in $y$, and $R_{mn}$ and $T_{mn}$ are eigen-expansion coefficients. 
As previously, $\psi_m$ satisfies the orthogonality condition in \eqref{eqn: orthog psi}. Furthermore, the eigenfunctions $\chi_n$ satisfy
\begin{equation}
    \int_{-B_f}^{B_f} \chi_m(y)\chi_n(y)~\mathrm{d} y=\delta_{mn}B_m.
\end{equation}
At $x=-L_f$ and $x=L_f$, multiplying \eqref{eqn: phi solution outside 3d} by $\psi_i\chi_j$, integrating in $y$ and $z$, applying orthogonality, and rearranging gives
\begin{equation}\label{eqn: Rij 3d}
    R_{ij}=\begin{cases}
        \displaystyle-\beta e^{-2\upi k_{0p}L_f} + \frac{e^{-\upi k_{0p}L_f}}{C_0B_p}\int_{B_f}^{B_f}\int_{-H}^0\phi(-L_f,y,z)\psi_0(z)\chi_p(y)~\mathrm{d}z~\mathrm{d}y,&i=0,j=p,\\
        \displaystyle\frac{e^{-\upi k_{ij}L_f}}{C_iB_j}\int_{B_f}^{B_f}\int_{-H}^0\phi(-L_f,y,z)\psi_i(z)\chi_j(y)~\mathrm{d}z~\mathrm{d}y,&\rm otherwise,
    \end{cases}
\end{equation}
and
\begin{equation}\label{eqn: Tij 3d}
    T_{ij}=\frac{e^{-\upi k_{ij}L_f}}{C_iB_j}\int_{B_f}^{B_f}\int_{-H}^0\phi(L_f,y,z)\psi_i(z)\chi_j(y)~\mathrm{d}z~\mathrm{d}y.
\end{equation}
Taking a normal derivative of \eqref{eqn: phi solution outside 3d} on $\Gamma_{\rm in}$ and $\Gamma_{\rm out}$ and substituting the expressions for $R_{mn}$ and $T_{mn}$ gives
\begin{align}
    \partial_{\boldsymbol{n}}\phi =\begin{cases} -2\upi k_{0p}\beta\psi_0(z)e^{-\upi k_{0p}L}\chi_p(y) + \sum_{n=0}^\infty\sum_{m=0}^{\infty} \frac{\upi k_{mn}\psi_m(z)\chi_n(y)}{C_mB_n}\int_{\Gamma_{\rm in}}\phi(x,y,z)\psi_m(z)\chi_n(y)~\mathrm{d}S,&\text{on }\Gamma_{\rm in},\\
     \sum_{n=0}^\infty\sum_{m=0}^{\infty} \frac{\upi k_{mn}\psi_m(z)\chi_n(y)}{C_mB_n}\int_{\Gamma_{\rm out}}\phi(x,y,z)\psi_m(z)\chi_n(y)~\mathrm{d}S,&\text{on }\Gamma_{\rm out},\end{cases}
\end{align}
where $\Gamma_{\rm in}$ and $\Gamma_{\rm out}$ are as shown in Figure~ \ref{fig:schematic_3d}. Truncating the series in $n$ at $N$ and $m$ at $M$ and substituting them into \eqref{eqn: intermediate pre SRC}, we obtain the operators $B_f$ and $L$ as given in \eqref{eqn: bilinear form fluid b} and \eqref{eqn: bilinear form fluid L}, respectively, while the integral operator $K(\phi,v)$ becomes
\begin{equation}\label{eqn: K op 3d}
    K(\phi,v) = \sum_{n=0}^{N}\sum_{m=0}^{M}\frac{\upi k_{mn}}{C_mB_n}\left[(v,\psi_m\chi_n)_{\Gamma_{\rm in}}(\phi,\psi_m\chi_n)_{\Gamma_{\rm in}} + (v,\psi_m\chi_n)_{\Gamma_{\rm out}}(\phi,\psi_m\chi_n)_{\Gamma_{\rm out}}\right].
\end{equation}

\subsection{Plate}\noindent
Next, we derive the bilinear form for the plate. For the sake of generality, we consider the three-dimensional case discussed in Section~\ref{sec: three dim channel}. Note that the two-dimensional case is analogous. We avoid using $\mathcal{C}^1$ finite elements to resolve the 4$^{\rm th}$ order operator via a continuous/discontinuous Galerkin (C/DG) formulation. This is similar to the approach discussed by \citet{OriolHydroelasticity}. We begin the derivation by multiplying \eqref{eqn: CS Gamma plate 2d} by a test function $r$ and integrating over $\Gamma$:
\begin{equation}
    0 = B_p([\phi,\w],[v,r]) = (\bnabla^2:(\Drig:\bnabla^2\w),r)_{\Gamma} - (I_0 \omega^2\w,r)_\Gamma - (\upi\omega\rho_w\jump{\phi},r)_\Gamma.
\end{equation}
Similarly to the DG formulation for the fluid, we can proceed with integration by parts on each partition $\Gamma_e$:
\begin{equation}
    (\bnabla^2:(\Drig:\bnabla^2\w),r)_{\Gamma_e} = (\Drig:\bnabla^2\w,\bnabla^2r)_{\Gamma_e} - ((\Drig:\bnabla^2\w)\cdot\boldsymbol{n}_\Gamma,\bnabla r)_{\partial\Gamma_e} + (\bnabla\cdot(\Drig:\bnabla^2\w)\cdot\boldsymbol{n}_\Gamma,r)_{\partial\Gamma_e}.
\end{equation}
$B_p$ can then be written as
\begin{equation}
\begin{aligned}
    B_p([\phi,\w],[v,r]) =&~ (\Drig:\bnabla^2\w,\bnabla^2r)_{\Gamma} - (I_0 \omega^2\w,r)_\Gamma - (\upi\omega\rho_w\jump{\phi},r)_\Gamma\\& - ((\Drig:\bnabla^2\w)\cdot\boldsymbol{n}_\Gamma,\bnabla r)_{\partial\Gamma} + (\bnabla\cdot(\Drig:\bnabla^2\w)\cdot\boldsymbol{n}_\Gamma,r)_{\partial\Gamma}\\& - (\mean{\Drig:\bnabla^2\w},\jump{\bnabla r\otimes\boldsymbol{n}_\Gamma})_{\Sigma} - (\jump{\Drig:\bnabla^2\w},\mean{\bnabla r\otimes\boldsymbol{n}_\Gamma})_{\Sigma}\\& + (\mean{\bnabla\cdot(\Drig:\bnabla^2\w)\cdot\boldsymbol{n}_\Gamma},\jump{r})_{\Sigma} + (\jump{\bnabla\cdot(\Drig:\bnabla^2\w)\cdot\boldsymbol{n}_\Gamma},\mean{r})_{\Sigma}.
\end{aligned}
\end{equation}
The last three terms vanish by requiring continuity of moments, shear, and the test function. Next, we add the symmetric DG term and penalty term
\begin{equation}
    0 = (\mean{\Drig:\bnabla^2r},\jump{\bnabla\w\otimes\boldsymbol{n}_\Gamma})_{\Sigma},~\&~0 = (\mu_2\jump{\bnabla r\otimes\boldsymbol{n}_\Gamma},\jump{\bnabla\w\otimes\boldsymbol{n}_\Gamma})_{\Sigma},
\end{equation}
where $\mu_2$ is a penalty parameter. For continuous $\bnabla\w$, these terms are consistent. We then have
\begin{equation}
\begin{aligned}
    B_p(&[\phi,\w],[v,r]) = (\Drig:\bnabla^2\w,\bnabla^2r)_{\Gamma} - (I_0 \omega^2\w,r)_\Gamma - (\upi\omega\rho_w\jump{\phi},r)_\Gamma\\& - ((\Drig:\bnabla^2\w)\cdot\boldsymbol{n}_\Gamma,\bnabla r)_{\partial\Gamma} + (\bnabla\cdot(\Drig:\bnabla^2\w)\cdot\boldsymbol{n}_\Gamma,r)_{\partial\Gamma}\\& - (\mean{\Drig:\bnabla^2\w},\jump{\bnabla r\otimes\boldsymbol{n}_\Gamma})_{\Sigma} - (\mean{\Drig:\bnabla^2r},\jump{\bnabla\w\otimes\boldsymbol{n}_\Gamma})_{\Sigma}+(\mu_2\jump{\bnabla r\otimes\boldsymbol{n}_\Gamma},\jump{\bnabla\w\otimes\boldsymbol{n}_\Gamma})_{\Sigma}.
\end{aligned}
\end{equation}
Finally, the terms in $\partial\Gamma$ depend on the chosen boundary conditions as follows.
\begin{itemize}
    \item \textbf{Free BCs}: Both terms on $\partial\Gamma$ vanish by \eqref{eqn:free 3d}, and we are left with
\begin{equation}\label{eqn: Bp f/s}
\begin{aligned}
    B_{p,\rm f/s}(&[\phi,\w],[v,r]) =~ (\Drig:\bnabla^2\w,\bnabla^2r)_{\Gamma} - (I_0 \omega^2\w,r)_\Gamma - (\upi\omega\rho_w\jump{\phi},r)_\Gamma\\& - (\mean{\Drig:\bnabla^2\w},\jump{\bnabla r\otimes\boldsymbol{n}_\Gamma})_{\Sigma} - (\mean{\Drig:\bnabla^2r},\jump{\bnabla\w\otimes\boldsymbol{n}_\Gamma})_{\Sigma}+(\mu_2\jump{\bnabla r\otimes\boldsymbol{n}_\Gamma},\jump{\bnabla\w\otimes\boldsymbol{n}_\Gamma})_{\Sigma}.
\end{aligned}
\end{equation}
    \item \textbf{Clamped BCs}: Take $\w$ and $r$ to be in $V_{\Gamma,0}=\{v\in V_\Gamma:v\rvert_{\partial\Gamma}=0\}$, then the second contribution on $\partial\Gamma$ vanishes by the support of $r$. In addition, following \citet{engel_continuousdiscontinuous_2002}, we weakly enforce $\partial_{\boldsymbol{n}_{\Gamma}}\w = 0$ on $\partial\Gamma$ using a symmetric interior penalty method. We then have
\begin{equation}\label{eqn: Bp c}
\begin{aligned}
    B_{p,\rm c}(&[\phi,\w],[v,r]) =~ (\Drig:\bnabla^2\w,\bnabla^2r)_{\Gamma} - (I_0 \omega^2\w,r)_\Gamma - (\upi\omega\rho_w\jump{\phi},r)_\Gamma\\& - ((\Drig:\bnabla^2\w)\cdot\boldsymbol{n}_\Gamma,\bnabla r)_{\partial\Gamma} - ((\Drig:\bnabla^2 r)\cdot\boldsymbol{n}_\Gamma,\bnabla w)_{\partial\Gamma} + (\mu_2(\bnabla r\otimes\boldsymbol{n}_\Gamma),(\bnabla\w\otimes\boldsymbol{n}_\Gamma))_{\partial\Gamma}\\& - (\mean{\Drig:\bnabla^2\w},\jump{\bnabla r\otimes\boldsymbol{n}_\Gamma})_{\Sigma} - (\mean{\Drig:\bnabla^2r},\jump{\bnabla\w\otimes\boldsymbol{n}_\Gamma})_{\Sigma}+(\mu_2\jump{\bnabla r\otimes\boldsymbol{n}_\Gamma},\jump{\bnabla\w\otimes\boldsymbol{n}_\Gamma})_{\Sigma}.
\end{aligned}
\end{equation} 
    \item \textbf{Simply-supported BCs}: Take $\w$ and $r$ to be in $V_{\Gamma,0}$. Then the first contribution on $\partial\Gamma$ vanishes by \eqref{eqn:simply-supported 3d} while the second vanishes by the support of $r$, giving the same operator as $B_{p,\rm f/s}([\phi,\w],[v,r])$ in \eqref{eqn: Bp f/s}.
\end{itemize}

\begin{remark}
The two-dimensional case in \eqref{eqn: CS Gamma plate} is analogous to the above. In this case, the above operators become
\begin{equation}\label{eqn: Bp f/s (2d)}
\begin{aligned}
    B_{p,\rm f/s}([\phi,\w],[v,r]) =&~ (D\Delta\w,\Delta r)_{\Gamma} - (I_0 \omega^2\w,r)_\Gamma - (\upi\omega\rho_w\jump{\phi},r)_\Gamma\\& - (\mean{D\Delta\w},\jump{\boldsymbol{n}_\Gamma\cdot\bnabla r})_{\Sigma} - (\mean{D\Delta r},\jump{\boldsymbol{n}_\Gamma\cdot\bnabla\w})_{\Sigma}+(\mu_2\jump{\bnabla r\cdot\boldsymbol{n}_\Gamma},\jump{\bnabla\w\cdot\boldsymbol{n}_\Gamma})_{\Sigma},
\end{aligned}
\end{equation}
    for the case of free/simply-supported BCs and
\begin{equation}\label{eqn: Bp c (2d)}
\begin{aligned}
    B_{p,\rm c}([\phi,\w],[v,r]) =&~ (D\Delta\w,\Delta r)_{\Gamma} - (I_0 \omega^2\w,r)_\Gamma - (\upi\omega\rho_w\jump{\phi},r)_\Gamma\\& - (D\Delta\w,\boldsymbol{n}_{\Gamma}\cdot\bnabla r)_{\partial\Gamma}- (D\Delta r,\boldsymbol{n}_{\Gamma}\cdot\bnabla w)_{\partial\Gamma}+ (\mu_2(\bnabla r\cdot\boldsymbol{n}_\Gamma),(\bnabla\w\cdot\boldsymbol{n}_\Gamma))_{\partial\Gamma}\\& - (\mean{D\Delta\w},\jump{\boldsymbol{n}_\Gamma\cdot\bnabla r})_{\Sigma} - (\mean{D\Delta r},\jump{\boldsymbol{n}_\Gamma\cdot\bnabla\w})_{\Sigma}+(\mu_2\jump{\bnabla r\cdot\boldsymbol{n}_\Gamma},\jump{\bnabla\w\cdot\boldsymbol{n}_\Gamma})_{\Sigma},
\end{aligned}
\end{equation}
    for clamped BCs.
\end{remark}

\subsection{Discrete formulation}\label{sec: discrete formulation}\noindent 
Let $\Omega_h$ be the finite element partition of $\Omega$ with partitions $\Omega_e$. Denote the dimension $d-1$ facets of $\Omega_h$ by $\mathcal{E}_h$ and the dimension $d-2$ subfacets by $\mathcal{P}_h$. Let $\Gamma_h\subset\mathcal{E}_h$ be the finite element partition of $\Gamma$ with partitions $\Gamma_e\in\mathcal{E}_h$. Finally, let $\Lambda_h$ and $\Sigma_h$ denote the discrete skeletons.

The DG + C/DG formulation for the problem is: find $(\phi_h,\w_h)\in\hat{\mathcal{V}}_h\times\hat{\mathcal{V}}_{\Gamma,h}$ such that
\begin{equation}
    B_f([\phi_h,\w_h],[v_h,r_h]) + \mu_p B_{p,\cdot}([\phi_h,\w_h],[v_h,r_h])= L(v_h),~\forall(v_h,r_h)\in\hat{\mathcal{V}}_h\times\hat{\mathcal{V}}_{\Gamma,h},
\end{equation}
where $\mu_p$ is introduced for stability and dimensional consistency, and the operator $B_{p,\cdot}$ corresponds to either $B_{p,\rm f/s}$ or $B_{p,\rm c}$ depending on the boundary conditions acting on the plate. In addition, the discrete function spaces $\hat{\mathcal{V}}$ and $\hat{\mathcal{V}}_\Gamma$ in the above are given by
\begin{align}
    \hat{\mathcal{V}}_h&=\{v\in L^2(\Omega;\mathbb{C}):v\vert_{\Omega_e}\in \mathbb{P}_r(\Omega_e;\mathbb{C})~\forall \Omega_e\in\Omega_h\},\\
    \hat{\mathcal{V}}_{\Gamma,h}&=\{v\in \mathcal{C}^0(\Gamma;\mathbb{C}):v\vert_{\Gamma_e}\in \mathbb{P}_r(\Gamma_e;\mathbb{C})~\forall \Gamma_e\in\Gamma_h\},
\end{align}
where $\mathbb{P}_r(\Omega_e;\mathbb{C})$ is the space of Lagrange polynomials of degree $r \geq 2$ in an element $\Omega_e$ with complex coefficients. Note that in the case of clamped or simply-supported BCs, the latter is replaced with
\begin{equation}
    \hat{\mathcal{V}}_{\Gamma,h}^0=\{v\in \hat{\mathcal{V}}_{\Gamma,h}:v\lvert_{\partial\Gamma}=0\}.
\end{equation}

We note that in a discrete setting the integration is done element by element, but we will use the same notation as in the continuous forms, i.e. $(u_h,v_h)_{\Omega_h}=\sum_K(u_h,v_h)_K$. Finally, with abuse of notation, we omit the subscript $h$ from all domains and treat these as discrete from this point onwards.

\section{Numerical analysis}\label{sec: analysis}\noindent
\subsection{Preliminary arguments}\label{sec:preliminary}\noindent
In the following, we prove various results for the proposed discrete formulation. To simplify the notation, we define $\xvec_h=[\phi_h,w_h]$ and $\yvec_h=[v_h,r_h]$, and the combined functional space $\X_h=\hat{\mathcal{V}}_h\times\hat{\mathcal{V}}_{\Gamma,h}$, with $\phi_h,v_h\in\hat{\mathcal{V}}_h$ and $w_h,r_h\in\hat{\mathcal{V}}_{\Gamma,h}$. Let us define the energy norm of the discrete problem for any $\xvec_h\in\X_h$ as
\begin{equation}
    \label{eq:energy_norm}
    \|\mathbf x_h\|_{\mathcal X_h}^2 = \|\bnabla\phi_h\|_{L^2(\Omega)}^2 + \|\phi_h\|_{L^2(\Omega)}^2 + J_f(\phi_h)^2
    + \mu_p\ \Big(\|\bnabla^2 w_h\|_{L^2(\Gamma)}^2 + \|w_h\|_{L^2(\Gamma)}^2 + J_p(w_h)^2\Big),
\end{equation}
with the jump terms given by
\[
J_f(v_h)^2 := \sum_{e\in\Lambda\setminus\Gamma} h_e^{-1}\big\|\jump{v_h}\big\|_{L^2(e)}^2,
\qquad
J_p(w_h)^2 := \sum_{p\in\Sigma} h_p^{-1}\bigl\|\jump{\bnabla w_h\otimes\boldsymbol n_{\mathcal P}}\bigr\|_{L^2(p)}^2.
\]

Note that throughout the analysis, we employ the notation $\bnabla(\cdot)$ and $\bnabla^2(\cdot)$ for both the classical and broken gradient and Hessian operators. The precise meaning is determined by the function space under consideration. For globally smooth functions these operators are understood in the classical sense, whereas for discontinuous finite element functions they are interpreted element-wise, i.e., as the broken gradient and broken Hessian.

For the sake of completeness, we introduce new notation that will be used in the analysis of the discrete formulation. Let us consider a triangulation $\mathcal{T}_h$, from which we construct the finite element partition $\Omega_h$ (denoted also as $\Omega$ in Section~\ref{sec: analysis} for notation simplicity), and a given element of the triangulation $K\in\mathcal{T}_h$ of diameter $h_K$ and a facet of this element $\mathcal{E}_K\in\partial K$ with outwards-pointing normal $\mathbf{n}_{\mathcal{E}_K}$. Note the abuse of notation in $K$, which will be used hereinafter to denote both an element in the triangulation $\mathcal{T}_h$ and the global radiation operator \eqref{eqn: K op 2d}/\eqref{eqn: K op 3d}. The conforming and quasi-uniform triangulation corresponding to the structure finite element partition will be denoted by $\mathcal{T}_h^\Gamma$. We denote by $h$ the domain characteristic element size, given by $h=\max_{K\in\mathcal{T}_h}h_K$, and quasi-uniformity constant $c_0\in(0,1]$ such
that $h_e\ge c_0h$ for every facet $e$.

In the analysis that follows, we consider the sesquilinear weak form for the case of free plate boundary conditions, equation~\eqref{eqn: Bp f/s}, and radiation boundary conditions, given by
\begin{equation}
\label{eq:bilinear_form_analysis}
\begin{aligned}
B([\phi,\w],[v,r]) = &~ (\bnabla\phi,\bnabla v)_{\Omega} - (\alpha\phi,v)_{\Gamma_{\rm fs}} - K(\phi,v) \\&- (\mean{\partialn\phi},\jump{v})_{\Lambda\setminus\Gamma} - (\mean{\partialn v},\jump{\phi})_{\Lambda\setminus\Gamma} + (\mu_1\jump{\phi},\jump{v})_{\Lambda\setminus\Gamma} + (\upi\omega\w,\jump{v})_{\Gamma}\\ & 
+\mu_p {\bigg{[}}(\Drig:\bnabla^2\w,\bnabla^2r)_{\Gamma} - (I_0 \omega^2\w,r)_\Gamma - (\upi\omega\rho_w\jump{\phi},r)_\Gamma\\& - (\mean{\Drig:\bnabla^2\w},\jump{\bnabla r\otimes\boldsymbol{n}_\Gamma})_{\Sigma} - (\mean{\Drig:\bnabla^2r},\jump{\bnabla\w\otimes\boldsymbol{n}_\Gamma})_{\Sigma}\\
&~+(\mu_2\jump{\bnabla r\otimes\boldsymbol{n}_\Gamma},\jump{\bnabla\w\otimes\boldsymbol{n}_\Gamma})_{\Sigma}{\bigg{]}}.
\end{aligned}
\end{equation}
Note that similar results can be obtained for the case of clamped or simply-supported boundary conditions. For the sake of brevity, we will not include them in the derivations that follow.

\begin{remark}
    We will use the same bilinear form definition, $B(\cdot,\cdot)$ given by equation~\eqref{eq:bilinear_form_analysis}, for both, continuous and discrete finite element spaces. Note that for the case of continuous spaces, i.e. $\xvec\in\X$, the terms involving jumps on $\Lambda\setminus\Gamma$ and $\Sigma$ vanish.
\end{remark}

We consider a set of assumptions that will be required throughout the analaysis section to establish the well-posedness of the discrete solution. Note that these are reasonable assumptions that do not enforce strong constraints to the numerical solution of most of the problems of interest.

\begin{assumption}[Bending rigidity tensor]\label{ass:rigidity}
We assume that the bending rigidity tensor $\Drig$ is bounded and uniformly elliptic,
\[
\mathbf x\cdot\Drig\cdot\mathbf x \ge \gamma_{\min}\|\mathbf x\|^2 \quad\text{a.e.\ on }\Gamma, \qquad
\|\Drig\|_{L^\infty(\Gamma)}<\infty.
\]
\end{assumption}

\begin{assumption}[Modal data of the global radiation operator]\label{ass:modal}
$\Gamma_{\rm in}$ and $\Gamma_{\rm out}$ share a common cross section, on which $\{\psi_m\}_{m=0}^M$ is a fixed,
finite family, mutually orthogonal in $L^2(\Omega)$ of that cross section, with $C_m:=\|\psi_m\|^2_{L^2(\Omega)}>0$. The axial
wavenumbers $k_m\in\mathbb C$, $k_m^2=k^2-\lambda_m$ (with $\lambda_m$ the transverse eigenvalue associated with $\psi_m$, $k=\omega/c$), are fixed by the outgoing/decaying branch convention $\mathrm{Im}(k_m)\ge0$, with
$k^2\ne\lambda_m$ for every $m\le M$.
\end{assumption}

\begin{assumption}[Interface splitting]\label{ass:gamma_splitting}
The interface $\Gamma$ is a Lipschitz hypersurface such that $\Omega\setminus\Gamma=\Omega^+\cup\Omega^-$
with $\Omega^+,\Omega^-$ disjoint, bounded, Lipschitz domains satisfying
$\overline{\Omega^+}\cap\overline{\Omega^-}=\overline\Gamma$. The triangulation $\mathcal T_h$ is compatible
with this splitting: $\mathcal T_h=\mathcal T_h^+\cup\mathcal T_h^-$, with $\mathcal T_h^\pm$ a shape-regular,
conforming triangulation of $\Omega^\pm$ satisfying Buffa--Ortner's Assumption~1
\citep{buffa2009compact}, and $\Lambda\setminus\Gamma=\Lambda^+\cup\Lambda^-$ where $\Lambda^\pm$ is the
interior skeleton of $\mathcal T_h^\pm$.
\end{assumption}

\begin{remark}
\autoref{ass:gamma_splitting} holds automatically whenever $\partial\Gamma\subset\partial\Omega$, that is, when the plate spans the fluid domain. If $\Gamma$ has a free interior
endpoint, $\Omega^+$ or $\Omega^-$ develops a non-Lipschitz cusp there and \autoref{ass:gamma_splitting}
fails locally. We consider the numerical analysis of this case out of the scope of this work, see Remark~\ref{rem:tips}.
\end{remark}

\begin{assumption}[Regularity of the exact solution]\label{ass:regularity}
In addition to \autoref{ass:gamma_splitting}, the exact solution $\xvec=[\phi,w]$ of the continuous
variational problem underlying~\eqref{eq:bilinear_form_analysis} satisfies
\[
\phi\in H^1(\Omega\setminus\Gamma), \qquad \phi\big|_K\in H^2(K)\ \ \forall K\in\mathcal T_h, \qquad
w\in C^1(\Gamma), \qquad w\big|_E\in H^3(E)\ \ \forall E\in\mathcal T_h^\Gamma,
\]
and solves the strong form of~\eqref{eqn: CS Omega}--\eqref{eqn:free} in each
element, with $\partial_n\phi$ and $\Drig:\nabla^2w$ single-valued across every interior facet of
$\mathcal T_h$ and $\mathcal T_h^\Gamma$ respectively; in particular $\jump\phi=0$ on $\Lambda\setminus\Gamma$
and $\jump{\nabla w\otimes\boldsymbol n_\Gamma}=0$ on $\Sigma$.
\end{assumption}

\begin{remark}[Free interior plate tips]\label{rem:tips}
If $\Gamma$ has a free interior endpoint, $\Omega^+$ or $\Omega^-$ of \autoref{ass:gamma_splitting} develops a non-Lipschitz slit-tip. Then, $\phi,w$
generically fail to be $H^2/H^3$ near the tip. In that case, \autoref{ass:regularity} and the quasi-optimal estimate
\eqref{eq:cea_final} do not apply directly. A rigorous treatment of the estimate requires further extension following, for example, the work in \cite{chaumont2020wavenumber}, which is left for future work. Note also
that the stability results of
Sections~\ref{sec:discrete_continuity}--\ref{sec:garding} and~\ref{sec:wellposedness}(i)--(ii) require no
regularity of the exact solution and are unaffected.
\end{remark}

\begin{assumption}[Continuous non-resonance]\label{ass:continuous_nonresonance}
Let $\X:=H^1(\Omega\setminus\Gamma)\times H^2(\Gamma)$ and let
$B:\X\times \X\to\mathbb C$ denote the continuous sesquilinear form
associated with the coupled hydroelastic problem and the truncated
radiation operator (Eqs.~\eqref{eqn: bilinear form fluid b},
\eqref{eqn: Bp f/s}/\eqref{eqn: Bp c}). For the frequency $\omega$
under consideration, the homogeneous problem $B(\xvec,\yvec)=0\ \forall \yvec\in \X$
admits only the trivial solution $\xvec=0$.
\end{assumption}

In addition to the previous assumptions, the following standard theorems and related lemmas will be used in the subsequent subsections. We refer the reader to \cite{arnold2002unified,van2005discontinuous}, and references therein, for further details on the analysis of similar problems.

\begin{theorem}[Local trace theorem]\label{theorem:trace}
    For every element $K\in\mathcal T_h$ of diameter $h_K$, facet $\mathcal E_K\subset\partial K$, and $\phi_h\in H^1(K)$, there is $C_{\mathrm{tr}}>0$, depending only on the shape regularity of $\mathcal T_h$, such that
    \begin{equation}
        \label{eq:trace_theorem}
        \left\|\phi_h\right\|^2_{L^2(\mathcal{E}_K)}\leq \Ctr\left(h_K^{-1}\left\|\phi_h\right\|^2_{L^2(K)}+h_K\left\|\bnabla\phi_h\right\|^2_{L^2(K)}\right).
    \end{equation}
\end{theorem}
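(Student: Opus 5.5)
The plan is to reduce to a fixed reference element by an affine change of variables and then apply the standard (continuous) trace theorem there, tracking how each quantity scales with $h_K$. Since $\mathcal T_h$ is shape regular, every element $K$ is the image of a fixed reference simplex $\hat K$ (of unit diameter) under an affine map $F_K(\hat{\mathbf x}) = A_K\hat{\mathbf x}+\mathbf b_K$, with $\|A_K\|\lesssim h_K$, $\|A_K^{-1}\|\lesssim h_K^{-1}$ and $|\det A_K|\simeq h_K^{d}$. The hidden constants depend only on the shape-regularity parameter. Each facet $\mathcal E_K$ is the image of a reference facet $\hat{\mathcal E}$, and its surface measure scales as $|\mathcal E_K|\simeq h_K^{d-1}$.

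On the reference element, I would invoke the classical trace inequality $\|\hat\phi\|^2_{L^2(\hat{\mathcal E})}\le \hat C\,\|\hat\phi\|^2_{H^1(\hat K)}$, valid for all $\hat\phi\in H^1(\hat K)$ because $\hat K$ is a bounded Lipschitz domain. Here $\hat C$ depends only on $\hat K$, and there are finitely many reference facets. To make the argument self-contained, this can also be proved directly by a divergence-theorem argument: choose a smooth vector field $\boldsymbol\sigma$ with $\boldsymbol\sigma\cdot\hat{\mathbf n}\ge 1$ on $\hat{\mathcal E}$ and $\boldsymbol\sigma\cdot\hat{\mathbf n}\ge0$ on $\partial\hat K$ (for instance $\hat{\mathbf x}-\hat{\mathbf a}$ with $\hat{\mathbf a}$ the opposite vertex, suitably normalised). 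Integrating $\bnabla\cdot(|\hat\phi|^2\boldsymbol\sigma)$ over $\hat K$ and applying Cauchy--Schwarz then gives the bound. Complex values cause no difficulty, since one works with $|\hat\phi|^2$ throughout.

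The transfer back to $K$ uses $\phi_h=\hat\phi\circ F_K^{-1}$ and three scaling relations:
\[
\|\phi_h\|^2_{L^2(\mathcal E_K)}\simeq h_K^{d-1}\|\hat\phi\|^2_{L^2(\hat{\mathcal E})},\qquad \|\hat\phi\|^2_{L^2(\hat K)}\simeq h_K^{-d}\|\phi_h\|^2_{L^2(K)},\qquad \|\hat\bnabla\hat\phi\|^2_{L^2(\hat K)}\lesssim h_K^{2-d}\|\bnabla\phi_h\|^2_{L^2(K)}.
\]
Combining these with the reference inequality yields $\|\phi_h\|^2_{L^2(\mathcal E_K)}\le \Ctr\,(h_K^{-1}\|\phi_h\|^2_{L^2(K)}+h_K\|\bnabla\phi_h\|^2_{L^2(K)})$. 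The constant $\Ctr$ depends only on $\hat C$ and the shape-regularity constants, and not on $h_K$ or on $\phi_h$.

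The only step needing care is the uniformity of the constants. The Jacobian bounds must follow from shape regularity alone, which holds because the ratio $h_K/\rho_K$ is bounded. The facet measure scaling must also be justified, which follows from the surface Jacobian $|\det A_K|\,|A_K^{-\top}\hat{\mathbf n}|\simeq h_K^{d-1}$. If Buffa--Ortner-type general polytopal elements are allowed, as in \autoref{ass:gamma_splitting}, one would instead decompose $K$ into a uniformly bounded number of shape-regular sub-simplices, each having a face on $\mathcal E_K$, and apply the simplex result on each piece.
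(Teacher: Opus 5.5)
Your affine-scaling argument is correct. The paper gives no proof of its own; it quotes this inequality as a standard tool from the DG literature it cites, where it is proved by the same reference-element argument (or, equivalently, by your divergence-theorem identity applied directly on $K$), so this is essentially the same approach. The one imprecision is in your closing polytopal remark: you need not that every sub-simplex has a face on $\mathcal E_K$, but that $\mathcal E_K$ is a union of faces of the uniformly boundedly many, comparably sized sub-simplices, after which summing the simplex estimate over those pieces gives the claim.
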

\begin{theorem}[Inverse inequality for fluxes]\label{theorem:inverse_inequality}
    For any $\phi_h\in\hat{\mathcal{V}}_h$, its gradient $\bnabla\phi_h$ is a polynomial on $K$. Then, there exists a constant $C_{\rm inv}$ such that the following inverse inequality holds
    \begin{equation}
        \label{eq:inverse_inequality}\left\|\bnabla\phi_h\cdot\mathbf{n}_{\mathcal{E}_K}\right\|^2_{L^2(\mathcal{E}_K)}\leq C_{\rm inv}h_K^{-1}\left\|\bnabla\phi_h\right\|^2_{L^2(K)}.
    \end{equation}
\end{theorem}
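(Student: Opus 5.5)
The inverse inequality for fluxes is a purely local, finite-dimensional statement, so the plan is to prove it on a fixed reference element and transfer it to a general $K$ by scaling. Only the shape regularity of $\mathcal T_h$ and the fixed polynomial degree $r$ of $\hat{\mathcal V}_h$ will enter. Since $\phi_h|_K\in\mathbb{P}_r(K;\mathbb{C})$, each component of $\bnabla\phi_h|_K$ lies in $\mathbb{P}_{r-1}(K;\mathbb{C})$. Moreover, $\mathbf{n}_{\mathcal E_K}$ is constant on the flat facet $\mathcal E_K$ with $|\mathbf n_{\mathcal E_K}|=1$. Hence, pointwise on $\mathcal E_K$, $|\bnabla\phi_h\cdot\mathbf n_{\mathcal E_K}|\le|\bnabla\phi_h|$, and it suffices to show
\[
\|q\|^2_{L^2(\mathcal E_K)}\le C h_K^{-1}\|q\|^2_{L^2(K)}\qquad\forall q\in\mathbb{P}_{r-1}(K;\mathbb{C}),
\]
and then apply this to each component of $\bnabla\phi_h$ and sum.

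On the reference simplex $\hat K$ with a reference facet $\hat{\mathcal E}$, the maps $\hat q\mapsto\|\hat q\|_{L^2(\hat{\mathcal E})}$ and $\hat q\mapsto\|\hat q\|_{L^2(\hat K)}$ are, respectively, a seminorm and a norm on the finite-dimensional space $\mathbb{P}_{r-1}(\hat K)$. By equivalence of norms in finite dimensions, there is $\hat C$, depending only on $r$ and $d$, such that $\|\hat q\|_{L^2(\hat{\mathcal E})}\le\hat C\|\hat q\|_{L^2(\hat K)}$. Finitely many reference facets occur, so we take the maximum over them.

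Next we transfer to $K$ through the affine map $F_K(\hat x)=A_K\hat x+b_K$, which preserves polynomial degree. The volume Jacobian scales like $|\det A_K|\sim h_K^{d}$, and the facet Jacobian scales like $|\mathcal E_K|/|\hat{\mathcal E}|\sim h_K^{d-1}$. The equivalence constants hidden in $\sim$ depend only on the shape-regularity constant, which bounds the ratio of $h_K$ to the inradius $\rho_K$. Then
\[
\|q\|^2_{L^2(\mathcal E_K)}\lesssim h_K^{d-1}\|\hat q\|^2_{L^2(\hat{\mathcal E})}\le \hat C^2 h_K^{d-1}\|\hat q\|^2_{L^2(\hat K)}\lesssim h_K^{d-1}h_K^{-d}\|q\|^2_{L^2(K)},
\]
which gives the claimed bound with $C_{\rm inv}$ depending only on $r$, $d$ and shape regularity. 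Alternatively, one could apply \autoref{theorem:trace} to $q$ and bound $h_K\|\bnabla q\|^2_{L^2(K)}$ by the standard volume inverse estimate $\|\bnabla q\|_{L^2(K)}\lesssim h_K^{-1}\|q\|_{L^2(K)}$. That estimate is itself proved by the same scaling argument, so the direct route above is simpler.

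The main point requiring care is making the constant uniform in $K$. The $h_K$ powers from the facet and volume Jacobians must combine to exactly $h_K^{-1}$, and all other factors, such as $\|A_K\|$ and $\|A_K^{-1}\|$ if one works with gradients instead of the components directly, must be controlled purely by $h_K/\rho_K$. Working componentwise on $\mathbb{P}_{r-1}$, as planned, avoids the $A_K^{-1}$ factors from the chain rule altogether. For complex-valued $\phi_h$ the estimate applies separately to the real and imaginary parts, and summing the two bounds gives the result.
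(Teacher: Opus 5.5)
Your proposal is correct: you reduce to the scalar estimate $\|q\|^2_{L^2(\mathcal{E}_K)}\le C h_K^{-1}\|q\|^2_{L^2(K)}$ for each component $q\in\mathbb{P}_{r-1}(K;\mathbb{C})$ of $\bnabla\phi_h$, prove it on the reference simplex by finite-dimensionality, and transfer it to $K$ through the affine Jacobians $|\mathcal{E}_K|\le C h_K^{d-1}$ and $|\det A_K|\ge c\,h_K^{d}$ (the latter from shape regularity), which is the standard argument; strictly, the facet $L^2$ norm is only a seminorm on $\mathbb{P}_{r-1}(\hat K)$, but any seminorm on a finite-dimensional space is bounded by a multiple of a norm, and that one-sided bound is all you use. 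The paper does not prove this estimate itself but quotes it as a standard result from the DG literature it cites, where it rests on this same reference-element scaling argument, so your route matches the intended one.
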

\begin{lemma}[Inverse trace inequality for the Hessian]\label{lemma:inverse_sigma}
For $w_h\in\hat{\mathcal V}_{\Gamma,h}$ and $p=\partial E_1\cap\partial E_2\in\Sigma$ with $E_1,E_2\in\mathcal{T}_h^\Gamma$, there is
$C_{\mathrm{inv}}^\Sigma>0$, in general different from $C_{\mathrm{inv}}$ of \autoref{theorem:inverse_inequality}, such that
\begin{equation}
\big\|\mean{\Drig:\bnabla^2 w_h}\big\|_{L^2(p)} \le \big(C_{\mathrm{inv}}^\Sigma\big)^{1/2} h_p^{-1/2}\,
\|\Drig\|_{L^\infty(\Gamma)}\,\|\bnabla^2 w_h\|_{L^2(E_1\cup E_2)}.
\end{equation}
\end{lemma}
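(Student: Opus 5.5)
The bound is local: it follows from a standard inverse trace inequality on each of the two elements sharing $p$, combined with the $L^\infty$ bound on $\Drig$ from \autoref{ass:rigidity}. First, split the mean with the triangle inequality,
\[
\big\|\mean{\Drig:\bnabla^2 w_h}\big\|_{L^2(p)}\le \tfrac12\big\|(\Drig:\bnabla^2 w_h)|_{E_1}\big\|_{L^2(p)}+\tfrac12\big\|(\Drig:\bnabla^2 w_h)|_{E_2}\big\|_{L^2(p)}.
\]
This reduces the claim to a single element $E_i$ with $p\subset\partial E_i$. On $E_i$, apply the pointwise estimate $|\Drig:\mathbf A|\le \|\Drig\|_{L^\infty(\Gamma)}|\mathbf A|$ for any second-order tensor $\mathbf A$. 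This holds a.e. on $\Gamma$ and hence on $p$, taking the trace of $\Drig$ from $E_i$. It gives $\|(\Drig:\bnabla^2 w_h)|_{E_i}\|_{L^2(p)}\le\|\Drig\|_{L^\infty(\Gamma)}\|(\bnabla^2 w_h)|_{E_i}\|_{L^2(p)}$, so $\Drig$ no longer appears inside the norm. This step avoids any assumption that $\Drig$ is polynomial or smooth.

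Next, since $w_h|_{E_i}\in\mathbb P_r(E_i)$, every entry of $\bnabla^2 w_h|_{E_i}$ is a polynomial of degree at most $r-2$. The discrete trace inequality, which is the same mechanism as \autoref{theorem:inverse_inequality}, then gives
\[
\|q\|_{L^2(p)}^2\le C\,h_{E_i}^{-1}\|q\|_{L^2(E_i)}^2\quad\text{for all } q\in\mathbb P_{r-2}(E_i).
\]
There are two ways to justify it. One is to combine \autoref{theorem:trace} with the elementwise inverse estimate $\|\bnabla q\|_{L^2(E_i)}\le C h_{E_i}^{-1}\|q\|_{L^2(E_i)}$. The other is to scale to a reference element, use equivalence of norms on the finite-dimensional space $\mathbb P_{r-2}$, and scale back. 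The constant depends only on $r$ and the shape regularity of $\mathcal T_h^\Gamma$. In the two-dimensional case $\Gamma$ is one-dimensional and $p$ is a point. There the same scaling argument bounds a point value, $|q(p)|^2\le C h_{E_i}^{-1}\|q\|^2_{L^2(E_i)}$, so the statement still reads correctly.

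The last step turns $h_{E_i}^{-1}$ into $h_p^{-1}$. By quasi-uniformity (and shape regularity), $h_p$ and $h_{E_i}$ are comparable up to a constant, so $h_{E_i}^{-1}\le C h_p^{-1}$. Summing the two halves then gives
\[
\tfrac12\|\Drig\|_{L^\infty(\Gamma)}\, C^{1/2}h_p^{-1/2}\big(\|\bnabla^2 w_h\|_{L^2(E_1)}+\|\bnabla^2 w_h\|_{L^2(E_2)}\big).
\]
Applying $a+b\le\sqrt2\,(a^2+b^2)^{1/2}$ bounds the bracket by $\sqrt2\,\|\bnabla^2 w_h\|_{L^2(E_1\cup E_2)}$, which yields the claim with $C_{\rm inv}^\Sigma$ collecting all constants.

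The main delicacy is the inverse trace step. Its constant must be independent of $h$ and of $w_h$, which requires the shape regularity and quasi-uniformity of $\mathcal T_h^\Gamma$ assumed above. A further subtlety is that the bound must be applied to $\bnabla^2 w_h$ alone, not to $\Drig:\bnabla^2 w_h$, since $\Drig$ need not be polynomial; this is why the $L^\infty$ bound is pulled out pointwise before the inverse estimate is used.
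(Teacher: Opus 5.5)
Your proposal is correct and follows the same route as the paper, whose proof is only a two-line sketch: pull $\Drig$ out in $L^\infty$, then apply the standard polynomial inverse trace estimate to $\bnabla^2 w_h$ on the elements adjacent to $p$. You make explicit what the sketch leaves implicit, namely the splitting of the two-sided mean, the scaling argument (including the point-valued case when $\Gamma$ is one-dimensional), and the comparison between $h_{E_i}$ and $h_p$.

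The only loose step is the inference ``a.e.\ on $\Gamma$, hence on $p$'', which does not follow because $p$ is a null set of $\Gamma$. What you actually need is that $\Drig|_{E_i}$ has a trace on $p$ bounded by $\|\Drig\|_{L^\infty(\Gamma)}$ (e.g.\ elementwise continuous $\Drig$), and the paper tacitly assumes this as well.
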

\begin{proof}
Standard polynomial inverse estimate applied to $\left\|\bnabla^2 w_h\right\|_{L^2(p)}$ on each $p=E_1\cap E_2$, combined with the
 inequality $\|\mean{\Drig:\bnabla^2 w_h}\big\|_{L^2(p)} \le \|\Drig\|_{L^\infty(\Gamma)}\,\|\bnabla^2 w_h\|_{L^2(p)}$.
\end{proof}

\begin{lemma}[Fluid recovery operator]\label{lem:fluid_recovery}
There exists $C_R>0$, independent of $h$, and a linear operator
$R_h:\hat{\mathcal V}_h\to H^1(\Omega\setminus\Gamma)$, constructed side-wise on $\Omega^+,\Omega^-$ as per \autoref{ass:gamma_splitting}, following the reconstruction operator as proposed in 
\citet[Eq.~(3.6)]{buffa2009compact}, such that
\begin{equation}
\label{eq:fluid_recovery}
\begin{aligned}
\|v_h - R_hv_h\|_{L^2(\Omega)} \;&\le\; C_R\,h\,\big(\|\bnabla_h v_h\|_{L^2(\Omega)}+J_f(v_h)\big),\\
\|\bnabla(v_h-R_hv_h)\|_{L^2(\Omega)} \;&\le\; C_R\,\big(\|\bnabla_h v_h\|_{L^2(\Omega)}+J_f(v_h)\big).
\end{aligned}
\end{equation}
\end{lemma}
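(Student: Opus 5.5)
The plan is to construct $R_h$ separately on each side of the interface and then glue. By \autoref{ass:gamma_splitting}, $\Omega\setminus\Gamma=\Omega^+\cup\Omega^-$ with each $\Omega^\pm$ a bounded Lipschitz domain carrying a shape-regular conforming triangulation $\mathcal T_h^\pm$ whose interior skeleton is $\Lambda^\pm$.

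On each $\Omega^\pm$ I will take $R_h^\pm$ to be the averaging (Oswald-type) reconstruction of \citet[Eq.~(3.6)]{buffa2009compact}. For $v_h\in\hat{\mathcal V}_h$, restrict $v_h$ to $\mathcal T_h^\pm$. Its image is the continuous piecewise polynomial (of degree $r$, or piecewise linear, as in Buffa--Ortner) whose nodal values are the arithmetic means of the values of $v_h$ from all elements of $\mathcal T_h^\pm$ sharing that node. Only elements of the same side are averaged, and no averaging is performed across $\Gamma$. I then set $R_hv_h:=R_h^+v_h$ on $\Omega^+$ and $R_h^-v_h$ on $\Omega^-$. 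Since $R_h^\pm v_h\in H^1(\Omega^\pm)$, the glued function lies in $H^1(\Omega\setminus\Gamma)$ and may jump across $\Gamma$, which is exactly the target space. Linearity is immediate from the construction.

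For the estimates I will invoke the local bounds behind Buffa--Ortner's result, namely their Assumption~1 combined with standard scaling arguments. On each element $K\in\mathcal T_h^\pm$,
\[
\|v_h-R_h^\pm v_h\|_{L^2(K)}^2+h_K^2\|\bnabla(v_h-R_h^\pm v_h)\|_{L^2(K)}^2\le C\sum_{e\in\Lambda^\pm,\ e\cap\overline K\neq\emptyset} h_e\|\jump{v_h}\|_{L^2(e)}^2 .
\]
This follows because the nodal discrepancy at any node is bounded by differences of values across facets in a patch. Each such difference is controlled by $h_e^{(1-d)/2}\|\jump{v_h}\|_{L^2(e)}$ by norm equivalence on the reference element, and the number of elements in each patch is uniformly bounded by shape regularity. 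Summing over $K$ and using the finite overlap of patches gives
\[
\|v_h-R_hv_h\|_{L^2(\Omega)}^2\le C h^2 J_f(v_h)^2, \qquad \|\bnabla(v_h-R_hv_h)\|_{L^2(\Omega)}^2\le C J_f(v_h)^2 .
\]
For the first bound I use $h_e\le h$ to write $h_e=h_e^2h_e^{-1}\le h^2h_e^{-1}$. For the second I use $h_K^{-2}h_e\lesssim h_e^{-1}$, which holds by shape regularity.

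Since the sums run only over $\Lambda^+\cup\Lambda^-=\Lambda\setminus\Gamma$, the jumps on $\Gamma$ never enter, and this is consistent with the definition of $J_f$. The stated inequalities then follow, with the extra $\|\bnabla_h v_h\|$ term on the right-hand side being harmless slack that is needed only when the Buffa--Ortner version is quoted in its full form.

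The main obstacle is boundary nodes. Nodes on $\Gamma$ itself pose no difficulty, since they belong to $\partial\Omega^\pm$ and the averaging is done over one side only, so no boundary jump term is required there. Nodes lying on $\partial\Gamma\cap\partial\Omega$ are where the Lipschitz property of $\Omega^\pm$ matters: it guarantees that vertex patches have uniformly bounded cardinality and are connected through facets. This is precisely what fails in the slit-tip case of Remark~\ref{rem:tips}, so \autoref{ass:gamma_splitting} is what allows the patch-connectivity argument to go through.
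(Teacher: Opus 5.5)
Your proof is correct. It shares the paper's skeleton: build the reconstruction separately on the Lipschitz pieces $\Omega^\pm$ of \autoref{ass:gamma_splitting}, average only within $\mathcal T_h^\pm$, and glue in $H^1(\Omega\setminus\Gamma)$. The difference is that the paper simply quotes Buffa--Ortner's operator, whereas you use a same-degree Oswald (Karakashian--Pascal) nodal average and prove the local estimates yourself.

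Because your operator reproduces continuous degree-$r$ elements, you get the sharper jump-only bounds $\|v_h-R_hv_h\|_{L^2(\Omega)}\lesssim h\,J_f(v_h)$ and $\|\bnabla(v_h-R_hv_h)\|_{L^2(\Omega)}\lesssim J_f(v_h)$. You must, however, commit to the degree-$r$ target. With your ``or piecewise linear'' option the local estimate is false, since a globally continuous quadratic has no jumps but is not reproduced. In that case $\|\bnabla_h v_h\|_{L^2(\Omega)}$ is essential, not slack.

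What the cited construction buys in return is that it is defined on the whole broken space $W^{1,p}(\mathcal T_h)$, which is why its bound must carry the broken gradient. The paper relies on this in Section~\ref{sec:extension}, where $R_h$ is applied to the non-polynomial exact solution. Your nodal averaging needs point values and so lives only on $\hat{\mathcal V}_h$. For that later use you would have to precompose with an elementwise projection onto $\hat{\mathcal V}_h$, which reintroduces the $\|\bnabla_h v\|$ term.

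A small correction to your last paragraph: bounded patch cardinality comes from shape regularity, not from Lipschitzness. Moreover, at a free interior tip the vertex patch remains connected through facets of $\Lambda\setminus\Gamma$ (one goes around the tip), so the averaging estimate does not fail there. What Remark~\ref{rem:tips} loses is the two-sided Lipschitz splitting and the regularity, not patch connectivity.
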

Using \autoref{ass:gamma_splitting}, since $\mathcal T_h$ is a shape-regular and conforming triangulation of the bounded Lipschitz domain $\Omega^\pm$, \autoref{lem:fluid_recovery} follows from \cite{buffa2009compact}. The full description of the proof is omitted here for brevity.

\begin{lemma}[Plate recovery operator]\label{lem:plate_recovery}
There exists $C_R^\Gamma>0$, independent of $h$, and a linear operator $S_h:\hat{\mathcal V}_{\Gamma,h}\to
H^2(\Gamma)$ such
that, for every $w_h\in\hat{\mathcal V}_{\Gamma,h}$,
\begin{equation}
\label{eq:plate_recovery}
\begin{aligned}
\|\bnabla w_h-\bnabla S_hw_h\|_{L^2(\Gamma)}
\;&\le\;
C_R^\Gamma\,h\,\big(\|\bnabla^2 w_h\|_{L^2(\Gamma)}+J_p(w_h)\big),\\
\|\bnabla^2 w_h-\bnabla^2 S_hw_h\|_{L^2(\Gamma)}
\;&\le\;
C_R^\Gamma\,\big(\|\bnabla^2 w_h\|_{L^2(\Gamma)}+J_p(w_h)\big),
\end{aligned}
\end{equation}
where $\bnabla^2$ denotes the broken Hessian.
\end{lemma}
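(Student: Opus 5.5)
We will build $S_h$ by averaging: a $\mathcal C^1$-conforming enrichment (averaging) operator mapping the $C^0$ Lagrange space $\hat{\mathcal V}_{\Gamma,h}$ into a $\mathcal C^1$ macro-element space, e.g.\ Hsieh--Clough--Tocher on $\mathcal T_h^\Gamma$ in the two-dimensional plate case, or cubic Hermite splines when $\Gamma$ is one-dimensional. This is in the spirit of the $C^0$ interior penalty analyses of Brenner and co-workers. Concretely, $S_hw_h$ is defined through its degrees of freedom:
\begin{itemize}
\item at a node $z$ belonging to several elements, each nodal value or derivative degree of freedom is set to the arithmetic mean of the corresponding values of $w_h|_E$ over the elements $E\ni z$;
\item at interior nodes of an element, the value is copied from $w_h$.
\end{itemize}
Since $w_h\in\mathcal C^0(\Gamma)$, the function values already agree at shared nodes, and only the derivative degrees of freedom (normal derivatives at edge nodes and gradients at vertices) involve genuine averaging. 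The resulting function is piecewise polynomial and globally $\mathcal C^1$, so $S_hw_h\in H^2(\Gamma)$, and the map is clearly linear.

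The core estimate is local. On each $E\in\mathcal T_h^\Gamma$, the difference $w_h-S_hw_h$ is a polynomial of bounded degree, so by scaling and equivalence of norms on the reference macro-element,
\[
\|\bnabla^2(w_h-S_hw_h)\|_{L^2(E)}^2\lesssim h_E^{d_\Gamma-4}\sum_{\text{dofs }N\text{ on }E}|N(w_h|_E)-N(S_hw_h)|^2 ,
\]
where $d_\Gamma$ is the dimension of $\Gamma$. The analogous bound holds for $\bnabla(w_h-S_hw_h)$ with $h_E^{d_\Gamma-2}$. Each degree-of-freedom discrepancy is a difference between $w_h|_E$ and an average over neighbours. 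Since function values are continuous, only gradient discrepancies survive, and each is controlled by differences $|\bnabla w_h|_{E}(z)-\bnabla w_h|_{E'}(z)|$ between elements sharing $z$.

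The next step is to bound these pointwise gradient jumps by the face jumps. Because $w_h$ is continuous, the tangential part of $\bnabla w_h$ is continuous across a face $p$, so the jump of $\bnabla w_h$ equals the jump of $\bnabla w_h\otimes\boldsymbol n$. Using a chain of elements around $z$ (shape regularity bounds the chain length) and an inverse estimate on $p$, we get
\[
|\jump{\bnabla w_h}(z)|^2\lesssim h_p^{1-d_\Gamma}\|\jump{\bnabla w_h\otimes\boldsymbol n}\|_{L^2(p)}^2 .
\]
Summing over the elements, with bounded overlap of patches, gives
\[
\|\bnabla^2(w_h-S_hw_h)\|^2_{L^2(\Gamma)}\lesssim\sum_p h_p^{-1}\|\jump{\bnabla w_h\otimes\boldsymbol n}\|^2_{L^2(p)}=J_p(w_h)^2 ,
\]
and $\|\bnabla(w_h-S_hw_h)\|^2_{L^2(\Gamma)}\lesssim h^2 J_p(w_h)^2$. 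These are even sharper than \eqref{eq:plate_recovery}, which additionally allows the $\|\bnabla^2w_h\|$ term. Quasi-uniformity ($h_e\ge c_0h$) is used to replace $h_p$ by $h$.

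The main obstacle is the careful macro-element bookkeeping in the two-dimensional plate case. There we must check that HCT degrees of freedom that are not present in the Lagrange space (normal derivatives at edge midpoints, and gradient averaging at vertices) are still controlled purely by gradient jumps. We must also handle boundary nodes on $\partial\Gamma$, where no neighbour average is needed for free boundary conditions; for clamped or simply-supported conditions, the constraint $w_h|_{\partial\Gamma}=0$ is simply preserved. If the degree $r$ exceeds that of the HCT element, we will first compose with an $L^2$-stable projection onto $\mathbb P_2$ per element. This contributes a term $\lesssim\|\bnabla^2 w_h\|_{L^2(\Gamma)}$ (respectively $h\|\bnabla^2 w_h\|_{L^2(\Gamma)}$ for the gradient), which is exactly why the stated bound includes $\|\bnabla^2w_h\|_{L^2(\Gamma)}$.
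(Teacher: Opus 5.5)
Your construction is correct, but it takes a different route from the paper. The paper proves the lemma in one line: it applies the Buffa--Ortner averaging reconstruction (the one behind Lemma~\ref{lem:fluid_recovery}) component-wise to the broken gradient $\bnabla w_h$. Because $w_h$ is continuous, the facet jumps of $\bnabla w_h$ reduce to $\jump{\bnabla w_h\otimes\boldsymbol n_\Gamma}$, so the right-hand side $\|\bnabla^2 w_h\|_{L^2(\Gamma)}+J_p(w_h)$ follows directly from existing machinery. As written, however, that argument produces an $H^1(\Gamma)$ vector field $\mathbf g_h$, not the gradient of an $H^2$ function. For a one-dimensional plate this is harmless, since you can set $S_hw_h(s):=w_h(s_0)+\int_{s_0}^{s}\mathbf g_h$. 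For the two-dimensional plate of the channel problem, though, component-wise averaging does not preserve $\operatorname{curl}\mathbf g_h=0$, so an extra argument is needed to get $S_hw_h\in H^2(\Gamma)$ with $\bnabla S_hw_h=\mathbf g_h$. Your Brenner-type $\mathcal C^1$ enriching operator (cubic Hermite in 1D, HCT in 2D) avoids this, because it outputs a genuine $\mathcal C^1$ function in both dimensions. For $r\le 3$ it also gives the sharper bound with $J_p(w_h)$ alone. The cost is the macro-element bookkeeping and a reduction step for $r\ge 4$.

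A few small points in your write-up need fixing:
\begin{itemize}
\item \emph{The $r\ge4$ reduction.} After an element-wise projection $\Pi$ onto $\mathbb P_2$, the function is no longer $\mathcal C^0$, so your observation that only gradient discrepancies survive fails. You must also bound the vertex-value jumps $\jump{\Pi w_h}=\jump{\Pi w_h-w_h}$; Bramble--Hilbert plus inverse estimates give an $O(\|\bnabla^2 w_h\|_{L^2(\Gamma)})$ contribution. More simply, use the continuous Lagrange nodal interpolant of degree $2$ or $3$, which preserves continuity, so the rest of your argument applies verbatim.
\item \emph{Scaling.} The factor $h_E^{d_\Gamma-4}$ in your norm-equivalence display is right only for $h_E$-scaled derivative degrees of freedom. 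For raw gradient values it is $h_E^{d_\Gamma-2}$, which is what your final $J_p$ bound actually uses.
\item \emph{Boundary conditions.} In the two-dimensional case, naive averaging of vertex gradients does not preserve $S_hw_h=0$ on $\partial\Gamma$. The lemma does not require this, so it does not affect the proof.
\end{itemize}
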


\autoref{lem:plate_recovery} follows from \cite{buffa2009compact}, applied component-wise to the broken gradient field $\bnabla w_h$.

\begin{lemma}[DG compactness]\label{lem:DGcompactness}
Let \autoref{ass:gamma_splitting} hold, and let $\{(\phi_h,w_h)\}_{h>0}\subset\X_h$ be a sequence
satisfying
\[
\sup_h\Big(
\|\bnabla\phi_h\|_{L^2(\Omega)}+
J_f(\phi_h)+
\|\bnabla^2w_h\|_{L^2(\Gamma)}+
J_p(w_h)
\Big)<\infty.
\]
Then there exist $\phi\in H^1(\Omega\setminus\Gamma)$ with no
continuity required across $\Gamma$, and $w\in H^2(\Gamma)$, and a not relabelled subsequence such that
\[
R_h\phi_h \rightharpoonup \phi \ \text{in } H^1(\Omega\setminus\Gamma),
\qquad
R_h\phi_h \to \phi \ \text{in } L^2(\Omega),
\]
\[
S_hw_h \rightharpoonup w\ \text{in } H^2(\Gamma),
\qquad
S_hw_h \to w\ \text{in } H^1(\Gamma).
\]
Moreover $\phi_h-R_h\phi_h \to 0$ in $L^2(\Omega)$ and $\bnabla w_h-\bnabla S_hw_h \to0$ in $L^2(\Gamma)$. Consequently, $\phi_h \to \phi$  strongly in $L^2(\Omega)$ and $\nabla w_h \to \nabla w$ strongly in $L^2(\Gamma)$.
\end{lemma}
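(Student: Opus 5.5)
The plan is to treat the fluid and plate components separately. For each, use the recovery operators of \autoref{lem:fluid_recovery} and \autoref{lem:plate_recovery} to transfer the uniform DG bounds to bounded sequences in genuine Sobolev spaces. Then invoke Rellich--Kondrachov compactness on each Lipschitz piece, and finally transfer the limits back to the discrete functions using the recovery error estimates, which carry an explicit factor $h$.

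\emph{Fluid component.} Let $\mathcal{M}$ denote the supremum in the hypothesis. By \autoref{lem:fluid_recovery}, $\|\bnabla R_h\phi_h\|_{L^2(\Omega)}\le (1+C_R)\mathcal{M}$. Controlling the $L^2$ part of $R_h\phi_h$ is the main obstacle, since the hypothesis bounds only gradients and jumps, not $\|\phi_h\|_{L^2(\Omega)}$. I would first try to show that the uniform bound is implicitly available. In the intended application (the Gårding/well-posedness argument), the sequence is normalised in the full $\X_h$ norm \eqref{eq:energy_norm}, which contains $\|\phi_h\|_{L^2(\Omega)}$ and $\|w_h\|_{L^2(\Gamma)}$. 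I would therefore state explicitly that $\sup_h\|\phi_h\|_{L^2(\Omega)}+\|w_h\|_{L^2(\Gamma)}<\infty$ is used. If only seminorms were bounded, the alternative is to subtract the side-wise means on $\Omega^\pm$ and use a broken Poincaré inequality on each $\Omega^\pm$; the statement's limits would then hold up to constants.

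With $\|\phi_h-R_h\phi_h\|_{L^2}\le C_R h\mathcal{M}$, the sequence $R_h\phi_h$ is bounded in $H^1(\Omega^+)\times H^1(\Omega^-)=H^1(\Omega\setminus\Gamma)$. This is where \autoref{ass:gamma_splitting} enters: each $\Omega^\pm$ is bounded Lipschitz, so $H^1(\Omega^\pm)\hookrightarrow L^2(\Omega^\pm)$ compactly. Extracting a subsequence gives weak convergence in $H^1(\Omega\setminus\Gamma)$ and strong convergence in $L^2(\Omega)$ to some $\phi$. No continuity across $\Gamma$ arises because $R_h$ is built side-wise. The estimate $\|\phi_h-R_h\phi_h\|_{L^2}\le C_R h\mathcal{M}\to0$ then gives $\phi_h\to\phi$ in $L^2(\Omega)$ by the triangle inequality.

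\emph{Plate component.} By \autoref{lem:plate_recovery}, $\|\bnabla^2 S_hw_h\|_{L^2(\Gamma)}\le(1+C_R^\Gamma)\mathcal{M}$. To control the lower-order terms of $S_hw_h$, I would use the fact that $w_h\in\mathcal{C}^0(\Gamma)$ is already in $H^1(\Gamma)$. Bounding $\|w_h\|_{H^1(\Gamma)}$ requires either the $L^2$ control above together with an interpolation/broken Poincaré-type argument on the gradient (via the first estimate of \autoref{lem:plate_recovery} and $\bnabla S_h w_h$), or, again, the full energy norm. If needed, I would also arrange (or verify from the Buffa--Ortner construction) that $S_h$ preserves the mean of $w_h$ and of $\bnabla w_h$, so that $\|S_hw_h\|_{L^2}$ is controlled by $\|w_h\|_{L^2}$ and the Hessian bound.

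Then $S_hw_h$ is bounded in $H^2(\Gamma)$, and Rellich on the Lipschitz surface $\Gamma$ gives a further subsequence with $S_hw_h\rightharpoonup w$ in $H^2$ and $S_hw_h\to w$ in $H^1$. Finally, $\|\bnabla w_h-\bnabla S_hw_h\|_{L^2}\le C_R^\Gamma h\mathcal{M}\to0$ yields $\bnabla w_h\to\bnabla w$ in $L^2(\Gamma)$. Taking a common subsequence for both components completes the argument.
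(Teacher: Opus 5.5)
Your argument is the paper's own: the estimates of \autoref{lem:fluid_recovery} and \autoref{lem:plate_recovery} give uniform Sobolev bounds on $R_h\phi_h$ and $S_hw_h$, Rellich compactness on $\Omega^\pm$ and $\Gamma$ extracts the subsequences, and the $\mathcal O(h)$ recovery errors transfer the limits back to $\phi_h$ and $\bnabla w_h$. You are also right that the stated hypothesis controls only seminorms; for instance, constants $\phi_h\equiv c_h$ with $|c_h|\to\infty$ satisfy it. So the paper's unproved claim that $R_h\phi_h$ and $S_hw_h$ are bounded in $H^1(\Omega\setminus\Gamma)$ and $H^2(\Gamma)$ really does need your extra bound $\sup_h(\|\phi_h\|_{L^2(\Omega)}+\|w_h\|_{L^2(\Gamma)})<\infty$, which the inf-sup proof supplies via $\|\xvec_{h_n}\|_{\X_{h_n}}=1$. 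It also needs the constant shift that makes $S_hw_h-w_h$ mean-free: Poincar\'e then gives $\|S_hw_h-w_h\|_{L^2(\Gamma)}=\mathcal O(h)$, and $\|s\|_{H^2(\Gamma)}\lesssim\|s\|_{L^2(\Gamma)}+\|\bnabla^2 s\|_{L^2(\Gamma)}$ bounds the gradient. With that, you need neither a mean condition on $\bnabla w_h$ nor a gradient term in the energy norm (which contains none).
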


\begin{proof}
By Lemmas~\ref{lem:fluid_recovery} and \ref{lem:plate_recovery}, the recovery errors vanish strongly for $h\to0$. That is $\|\phi_h-R_h\phi_h\|_{L^2(\Omega)}\le
C_Rh(\|\bnabla\phi_h\|_{L^2(\Omega)}+J_f(\phi_h))\to0$ since $h\to0$ and the terms in the parenthesis are bounded, with identical argument for $S_hw_h$. The assumed bounds imply that $\{R_h\phi_h\}$ is bounded in $H^1(\Omega\setminus\Gamma)$, as long as \autoref{ass:gamma_splitting} is valid, and $\{S_hw_h\}$ is bounded in $H^2(\Gamma)$. The compact embeddings
\[
H^1(\Omega\setminus\Gamma)\hookrightarrow L^2(\Omega),
\qquad
H^2(\Gamma)\hookrightarrow H^1(\Gamma),
\]
therefore yield the stated convergent subsequences. See also \citet[Theorem 5.2]{buffa2009compact} for an in-depth analysis of an equivalent result. 
\end{proof}

\subsection{Discrete continuity}\label{sec:discrete_continuity}\noindent
Using the trace theorem, \autoref{theorem:trace}, the inverse inequality for fluxes, \autoref{theorem:inverse_inequality}, the inverse inequality for the Hessian, \autoref{lemma:inverse_sigma}, and the fluid and plate recovery operators, \autoref{lem:fluid_recovery} and \autoref{lem:plate_recovery}, we can proof that the sesquilinear form $B(\xvec_h,\yvec_h)$ is continuous as demonstrated in the following theorem.

\begin{theorem}[Discrete continuity]\label{theorem:discrete_continuity}
    Let $\mathcal{T}_h$ be a shape-regular mesh of a bounded Lipschitz domain $\Omega \subset \mathbb{R}^n$ constituting the triangulation of the finite element partition $\Omega_h$. Let $\X_h=\hat{\mathcal{V}}_h\times\hat{\mathcal{V}}_{\Gamma,h}$ be the space equipped with the discrete norm $\|\cdot\|_{\X_h}$. Then, the sesquilinear form $B(\cdot, \cdot): \X_h \times \X_h \to \mathbb{C}$ is continuous. That is, there exists a mesh-independent constant $M > 0$ such that:$$|B(\xvec_h, \yvec_h)| \leq M \|\xvec_h\|_{\X_h} \|\yvec_h\|_{\X_h} \quad \forall \xvec_h, \yvec_h \in \X_h.$$
\end{theorem}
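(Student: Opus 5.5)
The plan is to bound $B(\xvec_h,\yvec_h)$ term by term, using the splitting in \eqref{eq:bilinear_form_analysis}. Each of the finitely many contributions is controlled by a product of pieces of $\|\xvec_h\|_{\X_h}$ and $\|\yvec_h\|_{\X_h}$. I then sum and apply the discrete Cauchy--Schwarz inequality $\sum_i a_ib_i\le(\sum_i a_i^2)^{1/2}(\sum_i b_i^2)^{1/2}$, so that $M$ is a sum of constants that depend only on the data ($\alpha$, $\omega$, $\rho_w$, $I_0$, $\|\Drig\|_{L^\infty}$, $\mu_p$, the modal data of \autoref{ass:modal}), on $\Ctr$, $C_{\rm inv}$, $C_{\rm inv}^\Sigma$, and on $c_0$. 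No term may carry a negative power of $h$ that is left uncompensated.

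\textbf{Volume and mass terms.} The terms $(\bnabla\phi_h,\bnabla v_h)_\Omega$, $\mu_p(\Drig:\bnabla^2 w_h,\bnabla^2 r_h)_\Gamma$ and $\mu_p I_0\omega^2(w_h,r_h)_\Gamma$ follow directly from Cauchy--Schwarz and the $L^\infty$ bound on $\Drig$ from \autoref{ass:rigidity}.

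\textbf{Boundary-trace terms.} The free-surface term $(\alpha\phi_h,v_h)_{\Gamma_{\rm fs}}$ and the radiation operator $K$ both require $L^2$ traces of broken functions on $\Gamma_{\rm fs}$, $\Gamma_{\rm in}$ and $\Gamma_{\rm out}$. I apply \autoref{theorem:trace} elementwise and sum over boundary elements. This gives $\|\phi_h\|^2_{L^2(\partial\Omega)}\le \Ctr(h^{-1}\|\phi_h\|^2_{\Omega_{\partial}}+h\|\bnabla\phi_h\|^2_{\Omega})$, which is not uniform in $h$. To avoid the $h^{-1}$ I write $\phi_h=R_h\phi_h+(\phi_h-R_h\phi_h)$ using \autoref{lem:fluid_recovery}:
\begin{itemize}
\item the conforming part $R_h\phi_h\in H^1(\Omega^\pm)$ obeys the standard continuous trace theorem on the Lipschitz subdomains of \autoref{ass:gamma_splitting};
\item the remainder obeys $h^{-1}\|\phi_h-R_h\phi_h\|^2_{L^2}\le C_R^2 h(\|\bnabla\phi_h\|+J_f)^2$, so the local trace estimate now costs nothing.
\end{itemize}
This yields $\|\phi_h\|_{L^2(\partial\Omega\cup\Gamma)}\lesssim\|\xvec_h\|_{\X_h}$. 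For $K$ the bound is then immediate, since it is a finite sum of rank-one terms: $|K(\phi_h,v_h)|\le\sum_m |k_m|C_m^{-1}\|\psi_m\|^2_{L^2}\|\phi_h\|_{L^2(\Gamma_{\rm in/out})}\|v_h\|_{L^2(\Gamma_{\rm in/out})}$. The same trace bound handles both coupling terms $(\upi\omega w_h,\jump{v_h})_\Gamma$ and $\mu_p(\upi\omega\rho_w\jump{\phi_h},r_h)_\Gamma$. Here I use $\|\jump{v_h}\|_{L^2(\Gamma)}\le\|v_{h,1}\|+\|v_{h,2}\|$, with each one-sided trace taken from $\Omega^\mp$.

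\textbf{SIPG terms.} For the fluid skeleton terms I write $|(\mean{\partialn\phi_h},\jump{v_h})_e|\le h_e^{1/2}\|\mean{\partialn\phi_h}\|_e\,h_e^{-1/2}\|\jump{v_h}\|_e$. \autoref{theorem:inverse_inequality} then gives $\sum_e h_e\|\mean{\partialn\phi_h}\|_e^2\le C\,C_{\rm inv}\|\bnabla\phi_h\|^2_\Omega$, using $h_e\simeq h_K$ by shape regularity. Hence this term and its symmetric counterpart are bounded by $\|\bnabla\phi_h\|J_f(v_h)$ and $\|\bnabla v_h\|J_f(\phi_h)$ respectively. The penalty term needs $\mu_1$ scaled as $\mu_1=\tilde\mu_1h_e^{-1}$, which I assume (it is the standard choice), so that it is bounded by $\tilde\mu_1J_f(\phi_h)J_f(v_h)$. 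The plate skeleton terms are handled identically: \autoref{lemma:inverse_sigma} gives $h_p^{1/2}\|\mean{\Drig:\bnabla^2w_h}\|_p\lesssim\|\bnabla^2w_h\|$, which pairs with $J_p(r_h)$, and $\mu_2=\tilde\mu_2h_p^{-1}$.

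\textbf{Main obstacle.} The key difficulty is the $h$-uniform trace control of broken functions on $\partial\Omega$ and on both sides of $\Gamma$. The recovery splitting above is what I would use first; if it proved awkward, I would fall back on a broken Poincaré--trace inequality for DG spaces à la Buffa--Ortner.
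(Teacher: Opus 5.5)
Your proposal is correct and follows essentially the same route as the paper's proof: the same block-by-block Cauchy--Schwarz bounds, and the same recovery splitting $\phi_h=R_h\phi_h+(\phi_h-R_h\phi_h)$, with the continuous trace theorem on $\Omega^\pm$ for the conforming part and the local trace theorem for the remainder, used on $\Gamma_{\rm fs}$, $\Gamma_{\rm in}\cup\Gamma_{\rm out}$ and both sides of $\Gamma$; the skeleton terms are likewise handled by the inverse inequalities together with $h_e^{-1}$-scaled penalties. The only minor difference is that you bound $K$ term by term, giving the constant $\sum_m|k_m|$, whereas the paper applies Bessel's inequality for the orthonormal system $\{\psi_m/\sqrt{C_m}\}$ to obtain the sharper constant $\max_m|k_m|$.
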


\begin{proof}
To establish the continuity of the coupled sesquilinear form $B(\xvec_h, \yvec_h)$, we apply the triangle inequality to decompose the global expression into different integration domains.
\begin{equation*}
    |B(\xvec_h, \yvec_h)| \leq |B_{\Omega}(\phi_h, v_h)| + |B_{\partial\Omega}(\phi_h, v_h)| + |B_{\Lambda}(\phi_h, v_h)| + |B_{\Gamma}(\phi_h, \w, v_h, r_h)| + |B_{\Sigma}(\w, r_h)|, 
\end{equation*}
with 
\begin{align*}
    B_{\Omega}(\phi_h, v_h) =&\   (\bnabla\phi_h,\bnabla v_h)_{\Omega},\\
    B_{\partial\Omega}(\phi_h, v_h) =&\ (\alpha\phi_h,v_h)_{\Gamma_{\rm fs}} + K(\phi_h,v_h),\\
    B_{\Lambda}(\phi_h, v_h) =&\ - (\mean{\partialn\phi_h},\jump{v_h})_{\Lambda\setminus\Gamma} - (\mean{\partialn v_h},\jump{\phi_h})_{\Lambda\setminus\Gamma} + (\mu_1\jump{\phi_h},\jump{v_h})_{\Lambda\setminus\Gamma},\\
    B_{\Gamma}(\phi_h, \w, v_h, r_h) =&\ (i\omega\w,\jump{v_h})_{\Gamma} +
\mu_p (\Drig:\bnabla^2\w_h,\bnabla^2r_h)_{\Gamma} - \mu_p(I_0 \omega^2\w_h,r_h)_\Gamma - \mu_p(\upi\omega\rho_w\jump{\phi_h},r_h)_\Gamma,\\ 
    B_{\Sigma}(\w_h, r_h) =&\ - (\mean{\Drig:\bnabla^2\w_h},\jump{\bnabla r_h\otimes\boldsymbol{n}_\Gamma})_{\Sigma} + (\mean{\Drig:\bnabla^2r_h},\jump{\bnabla\w_h\otimes\boldsymbol{n}_\Gamma})_{\Sigma}.
\end{align*}
We bound each constituent term individually as follows:

\medskip
\noindent\textbf{Step 1 (Fluid interior term, $B_{\Omega}$).}
Let us start with the term corresponding to the interior integral, recalling that $|B_{\Omega}(\phi_h, v_h)| = |(\bnabla\phi_h,\bnabla v_h)_{\Omega}|=\left|\sum_{K\in\mathcal{T}_h}(\bnabla\phi_h,\bnabla v_h)_{K}\right|$. Applying the Cauchy-Schwarz inequality, we have the sharp bound
\begin{equation}
    \label{eq:proof_continuity_1}
    |(\bnabla\phi_h,\bnabla v_h)_{\Omega}| \leq \|\bnabla\phi_h\|_{L^2(\Omega)} \|\bnabla v_h\|_{L^2(\Omega)} \leq \|\xvec_h\|_{\X_h} \|\yvec_h\|_{\X_h}.
\end{equation}

\medskip
\noindent\textbf{Step 2 (Fluid boundary terms, $B_{\partial\Omega}$).}
We first bound the global operator $K(\phi_h,v_h)$. For a given boundary $\Gamma_{\rm in}$ and for $m=0,\dots,M$, we define 
$a_m:=\big(|k_m|/C_m\big)^{1/2}\big|(v_h,\psi_m)_{\Gamma_{\rm in}}\big|$ and
$b_m:=\big(|k_m|/C_m\big)^{1/2}\big|(\phi_h,\psi_m)_{\Gamma_{\rm in}}\big|$. By the triangle inequality and the discrete Cauchy--Schwarz inequality applied to the finite sum over $m$,
\[
\bigg|\sum_{m=0}^M\frac{k_m}{C_m}(v_h,\psi_m)_{\Gamma_{\rm in}}(\phi_h,\psi_m)_{\Gamma_{\rm in}}\bigg| \le
\sum_{m=0}^M a_mb_m \le \Big(\sum_{m=0}^M a_m^2\Big)^{1/2}\Big(\sum_{m=0}^M b_m^2\Big)^{1/2}.
\]
Let us define $\gamma_K:=\max_{0\le m\le M}|k_m|$. Then, $\sum_m a_m^2 = \sum_m\frac{|k_m|}{C_m}\big|(v_h,\psi_m)_{\Gamma_{\rm
in}}\big|^2 \le \gamma_K\sum_m\frac{\big|(v_h,\psi_m)_{\Gamma_{\rm in}}\big|^2}{C_m} \le
\gamma_K\|v_h\|^2_{L^2(\Gamma_{\rm in})}$, the last step by Bessel's inequality for the orthonormal system $\{\psi_m/\sqrt{C_m}\}_{m=0}^M$. Similarly, the same bound can be obtained for the sum on $b_m$, i.e. $\sum_m b_m^2\le
\gamma_K\|\phi_h\|^2_{L^2(\Gamma_{\rm in})}$. Hence
\[
\Big|\sum_{m=0}^M\frac{k_m}{C_m}(v_h,\psi_m)_{\Gamma_{\rm in}}(\phi_h,\psi_m)_{\Gamma_{\rm in}}\Big| \le \gamma_K\,
\|\phi_h\|_{L^2(\Gamma_{\rm in})}\|v_h\|_{L^2(\Gamma_{\rm in})}.
\]
Using \autoref{ass:modal}, we can reuse the same $\{\psi_m\},\{C_m\}$ for the outlet boundary $\Gamma_{\rm out}$, leading to the global bound
\[
|K(\phi_h,v_h)| \le \gamma_K\Big(\|\phi_h\|_{L^2(\Gamma_{\rm in})}\|v_h\|_{L^2(\Gamma_{\rm in})} +
\|\phi_h\|_{L^2(\Gamma_{\rm out})}\|v_h\|_{L^2(\Gamma_{\rm out})}\Big) \le \gamma_K\,\|\phi_h\|_{L^2(\Gamma_{\rm
in}\cup\Gamma_{\rm out})}\|v_h\|_{L^2(\Gamma_{\rm in}\cup\Gamma_{\rm out})}.
\]
Note that if the inlet and outlet are not aligned, the coefficient on the norms in the final inequality becomes $\max(\gamma_{K_{\rm in}},\gamma_{K_{\rm out}})$ where $\gamma_{K_{\rm in}}$ and $\gamma_{K_{\rm out}}$ are the same as $\gamma_K$, except $k_m$ is calculated using the relevant height of the inlet and outlet, respectively.

Applying Cauchy-Schwarz on the boundary facets belonging to $\Gamma_{\rm fs}$ together with the bound above on
$\Gamma_{\rm in}\cup\Gamma_{\rm out}$, we have that
\begin{align*}
    |(\alpha\phi_h,v_h)_{\Gamma_{\rm fs}} + K(\phi_h,v_h)| \leq&\ |(\alpha\phi_h,v_h)_{\Gamma_{\rm fs}}| + |K(\phi_h,v_h)|\\
    \leq&\ |\alpha|\, \|\phi_h\|_{L^2(\Gamma_{\rm fs})} \|v_h\|_{L^2(\Gamma_{\rm fs})} + \gamma_K\,
    \|\phi_h\|_{L^2(\Gamma_{\rm in}\cup\Gamma_{\rm out})} \|v_h\|_{L^2(\Gamma_{\rm in}\cup\Gamma_{\rm out})}\\
    \leq&\ \gamma\, \|\phi_h\|_{L^2(\partial\Omega)} \|v_h\|_{L^2(\partial\Omega)},
\end{align*}
with $\gamma = \max\left\{|\alpha|,\gamma_K\right\}$. 

Let us now consider the identity $\phi_h = R_h\phi_h + (\phi_h-R_h\phi_h)$
on $\partial\Omega$. Since $\phi_h-R_h\phi_h$ is a polynomial
on each element, the local trace theorem summed over
$K\cap\partial\Omega\ne\emptyset$, together with
\autoref{lem:fluid_recovery}, gives
\[
\begin{aligned}
  \|\phi_h-R_h\phi_h\|^2_{L^2(\partial\Omega)}
  &\le C_{\rm tr}\Big(h^{-1}\|\phi_h-R_h\phi_h\|^2_{L^2(\Omega)}
  + h\|\bnabla(\phi_h-R_h\phi_h)\|^2_{L^2(\Omega)}\Big)\\
  &\le 2C_{\rm tr}C_R^2\,h\,\big(\|\bnabla\phi_h\|_{L^2(\Omega)}+J_f(\phi_h)\big)^2
  \le 4C_{\rm tr}C_R^2\,h\,\big(\|\bnabla\phi_h\|^2_{L^2(\Omega)}+J_f(\phi_h)^2\big),
\end{aligned}
\]
using $\|\bnabla_h\phi_h\|_{L^2(\Omega)}=\|\bnabla\phi_h\|_{L^2(\Omega)}$ for the broken polynomial $\phi_h$.

Since $R_h\phi_h\in H^1(\Omega\setminus\Gamma)$
and $\partial\Omega$ is disjoint from the internal interface $\Gamma$, the
\emph{continuous} trace inequality applies with an $h$-independent
constant $C_{\rm tr}^{\partial\Omega}$: for any fixed $\delta>0$,
\[
  \|R_h\phi_h\|^2_{L^2(\partial\Omega)}
  \le \delta\|\bnabla R_h\phi_h\|^2_{L^2(\Omega)} + C_\delta\|R_h\phi_h\|^2_{L^2(\Omega)}.
\]
By the triangle inequality applied to $\|\bnabla R_h\phi_h+\bnabla \phi_h-\bnabla \phi_h\|_{L^2(\Omega)}$ and $\|R_h\phi_h+\phi_h-\phi_h\|_{L^2(\Omega)}$ and applying \autoref{lem:fluid_recovery} (using $h\le1$),
\[
\|\bnabla R_h\phi_h\|_{L^2(\Omega)} \le (1+C_R)\|\bnabla\phi_h\|_{L^2(\Omega)} + C_RJ_f(\phi_h),\qquad\|R_h\phi_h\|_{L^2(\Omega)} \le \|\phi_h\|_{L^2(\Omega)} + C_R\|\bnabla\phi_h\|_{L^2(\Omega)} + C_RJ_f(\phi_h).
\]
Squaring these bounds via $(a+b)^2\le2a^2+2b^2$ gives
\[
  \|\bnabla R_h\phi_h\|^2_{L^2(\Omega)} \le 2(1+C_R)^2\|\bnabla\phi_h\|^2_{L^2(\Omega)}+2C_R^2J_f(\phi_h)^2,
\]
\[
  \|R_h\phi_h\|^2_{L^2(\Omega)} \le 2\|\phi_h\|^2_{L^2(\Omega)}+4C_R^2\|\bnabla\phi_h\|^2_{L^2(\Omega)}+4C_R^2J_f(\phi_h)^2.
\]
Substituting these into the continuous trace inequality with $\delta:=1$,
\[
  \|R_h\phi_h\|^2_{L^2(\partial\Omega)}
  \le \big[2(1+C_R)^2+4C_\delta C_R^2\big]\|\bnabla\phi_h\|^2_{L^2(\Omega)}
  + 2C_\delta\|\phi_h\|^2_{L^2(\Omega)}
  + 2C_R^2(1+2C_\delta)\,J_f(\phi_h)^2,
\]
with all constants $h$-independent.

Combining the two bounds, $\|R_h\phi_h\|^2_{L^2(\partial\Omega)}$ and $\|\phi_h-R_h\phi_h\|^2_{L^2(\partial\Omega)}$, again via
$(a+b)^2\le2a^2+2b^2$ and using $h\le1$, we have
\[
  \|\phi_h\|^2_{L^2(\partial\Omega)} \le \widehat C_1^2\,\|x_h\|^2_{\X_h},
  \qquad
  \widehat C_1^2 := \max\Big(4(1+C_R)^2+8(C_\delta+C_{\rm tr})C_R^2,\ \ 4C_\delta\Big).
\]
The identical argument applied to $v_h$ gives
$\|v_h\|^2_{L^2(\partial\Omega)}\le\widehat C_1^2\|y_h\|^2_{X_h}$.
Substituting into the inequality for $|B_{\partial\Omega}(\phi_h,v_h)|$, we have
\[
  |B_{\partial\Omega}(\phi_h,v_h)| \le \bar C_1\,\|x_h\|_{X_h}\|y_h\|_{X_h},
  \qquad \bar C_1 := \gamma\widehat C_1^2 = O(1).
\]

\medskip
\noindent\textbf{Step 3 (Fluid skeleton terms, $B_{\Lambda}$).}
Regarding the fluid skeleton terms on $\Lambda\setminus\Gamma$, we again apply the Cauchy-Schwarz inequality face-by-face, giving 
\begin{equation*}
    |(\mean{\partialn\phi_h},\jump{v_h})_{\Lambda\setminus\Gamma}| \leq \left(\sum_{e \in \Lambda\setminus\Gamma} h_e \|\mean{\partialn\phi_h}\|_{L^2(e)}^2\right)^{1/2} \left(\sum_{e \in \Lambda\setminus\Gamma} h_e^{-1} \left\|\jump{v_h}\right\|_{L^2(e)}^2\right)^{1/2}.
\end{equation*}
Applying the inverse inequality for fluxes (\autoref{theorem:inverse_inequality}) to the first term, we control the normal gradient by the element volume gradient, i.e.,  $h_e \|\mean{\partialn\phi_h}\|_{L^2(e)}^2 \leq C_{\rm inv} \|\bnabla\phi_h\|_{L^2(K_1\cup K_2)}^2$. Using the fact that each element $K$ contributes to at most $N_f$ faces (a shape-regularity constant, so that $\sum_e\|\bnabla\phi_h\|^2_{K_1\cup
K_2}\le N_f\|\bnabla\phi_h\|^2_{\Omega}$), we have that
\begin{equation*}
    |(\mean{\partialn\phi_h},\jump{v_h})_{\Lambda\setminus\Gamma}| \leq \big(C_{\mathrm{inv}}N_f\big)^{1/2}\|\bnabla\phi_h\|_{\Omega}\,J_f(v_h) \le (C_{\mathrm{inv}}N_f)^{1/2}\|\xvec_h\|_{\X_h}\|\yvec_h\|_{\X_h}.
\end{equation*}
Using the same arguments, the symmetric term $(\mean{\partialn v},\jump{\phi})_{\Lambda\setminus\Gamma}$ has the identical upper bound.

For the penalty term, discrete Cauchy--Schwarz and $\mu_1=\sigma_1h^{-1}\le\sigma_1h_e^{-1}$ (since
$h_e\le h$) give
\[
\big|(\mu_1\jump{\phi_h},\jump{v_h})_{\Lambda\setminus\Gamma}\big| \le \sigma_1\sum_eh_e^{-1}\left\|\jump{\phi_h}\right\|_{L^2(e)}
\left\|\jump{v_h}\right\|_{L^2(e)} 
\le \sigma_1\|\mathbf x_h\|_{\mathcal X_h}\|\mathbf y_h\|_{\mathcal X_h}.
\]
Collecting the three pieces, with $C_2:=2(C_{\mathrm{inv}}N_f)^{1/2}$,
\[
|B_\Lambda| \le \big(C_2 + \sigma_1\big)\|\mathbf x_h\|_{\mathcal X_h}\|\mathbf y_h\|_{\mathcal X_h} .
\]

\medskip
\noindent\textbf{Step 4 (Fluid--structure coupling terms).}
For the fluid-structure coupling terms acting on the plate boundary $\Gamma$, we apply a similar argument as in the free surface boundary. Let $\Omega^+,\Omega^-$ denote the two sides of the interface $\Gamma$. \autoref{ass:regularity} permits $\phi\in H^1(\Omega\setminus\Gamma)$,
i.e.\ a genuine jump across $\Gamma$, therefore, $R_h\phi_h$, valued in $H^1(\Omega\setminus\Gamma)$, is not assumed single-valued at $\Gamma$. By the continuous trace theorem applied separately on
$\Omega^+$ and $\Omega^-$, with constants $C_{\rm tr}^{\Omega^\pm}$, both
$h$-independent, and the triangle inequality,
\[
  \|\jump{R_hv_h}\|_{L^2(\Gamma)}
  \le \|R_hv_h|_{\Omega^+}\|_{L^2(\Gamma)} + \|R_hv_h|_{\Omega^-}\|_{L^2(\Gamma)}
  \le 2C_{\rm tr}^\Gamma\|R_hv_h\|_{H^1(\Omega)},
  \qquad C_{\rm tr}^\Gamma:=\max(C_{\rm tr}^{\Omega^+},C_{\rm tr}^{\Omega^-}).
\]
By \autoref{lem:fluid_recovery}, squaring the bounds
$\|\nabla R_hv_h\|_{L^2(\Omega)}\le(1+C_R)\|\nabla v_h\|_{L^2(\Omega)}+C_RJ_f(v_h)$ and
$\|R_hv_h\|_{L^2(\Omega)}\le\|v_h\|_{L^2(\Omega)}+C_R\|\nabla v_h\|_{L^2(\Omega)}+C_RJ_f(v_h)$
via $(a+b)^2\le2a^2+2b^2$ exactly as in Step~2, we obtain
\[
\|R_hv_h\|^2_{H^1(\Omega)} \;\le\; 2\|v_h\|^2_{L^2(\Omega)}
+\big[2(1+C_R)^2+4C_R^2\big]\|\nabla v_h\|^2_{L^2(\Omega)}+6C_R^2J_f(v_h)^2
\;\le\; C_{R,1}\,\|y_h\|^2_{X_h},
\]
with $C_{R,1}:=\max\big(2,\,2(1+C_R)^2+4C_R^2,\,6C_R^2\big)$, hence
\[
  \|\jump{R_hv_h}\|_{L^2(\Gamma)} \le \widetilde C\,\|y_h\|_{X_h},
  \qquad \widetilde C := 2C_{\rm tr}^\Gamma\sqrt{C_{R,1}}.
\]
For the non-conforming remainder, the same trace argument as in Step~2 gives, using again \autoref{lem:fluid_recovery} and $h\le1$,
\[
\|\jump{v_h-R_hv_h}\|^2_{L^2(\Gamma)} \le 4C_{\rm tr}C_R^2\,h\,\big(\|\nabla v_h\|^2_{L^2(\Omega)}+J_f(v_h)^2\big)
\le 4C_{\rm tr}C_R^2\,\|y_h\|^2_{X_h},
\]
therefore
\[
\|\jump{v_h-R_hv_h}\|_{L^2(\Gamma)} \le 2\sqrt{C_{\rm tr}}\,C_R\,\|y_h\|_{X_h}.
\]
By the triangle
inequality, $\jump{v_h}=\jump{R_hv_h}+\jump{v_h-R_hv_h}$, then
\[
  \|\jump{v_h}\|_{L^2(\Gamma)} \le \widetilde C'\,\|y_h\|_{X_h},
  \qquad \widetilde C' := \widetilde C + 2\sqrt{C_{\rm tr}}\,C_R = O(1).
\]
The identical argument (with $\phi_h$ in place of $v_h$) gives
$\|\jump{\phi_h}\|_{L^2(\Gamma)}\le\widetilde C'\|x_h\|_{X_h}$. Hence
\[
  |(i\omega w_h,\jump{v_h})_\Gamma|
  \le |\omega|\,\|w_h\|_{L^2(\Gamma)}\|\jump{v_h}\|_{L^2(\Gamma)}
  \le |\omega|\widetilde C'\,\|x_h\|_{X_h}\|y_h\|_{X_h},
\]
\[
  |\mu_p(i\omega\rho_w\jump{\phi_h},r_h)_\Gamma|
  \le |\omega\rho_w\mu_p|\,\widetilde C'\,\|x_h\|_{X_h}\|y_h\|_{X_h}.
\]
Grouping the two coupling terms, with
$\bar C_3 := |\omega|\widetilde C'\max(1,|\mu_p\rho_w|)$,
\[
  |(i\omega w_h,\jump{v_h})_\Gamma - \mu_p(i\omega\rho_w\jump{\phi_h},r_h)_\Gamma|
  \le \bar C_3\,\|x_h\|_{X_h}\|y_h\|_{X_h}. 
\]

\medskip
\noindent\textbf{Step 5 (Structure terms).}
The structural bilinear form components on $\Gamma$ satisfy standard continuity via Cauchy-Schwarz bounds, that is
\begin{equation*}
    |\mu_p(\Drig:\bnabla^2\w_h,\bnabla^2r_h)_{\Gamma}| \leq |\mu_p|\, \|\Drig\|_{L^{\infty}(\Gamma)} \|\bnabla_h^2\w_h\|_{L^2(\Gamma)} \|\bnabla_h^2 r_h\|_{L^2(\Gamma)} \leq |\mu_p|\, \|\Drig\|_{L^{\infty}(\Gamma)} \|\xvec_h\|_{\X_h} \|\yvec_h\|_{\X_h},
\end{equation*}
\begin{equation*}
    |\mu_p(I_0 \omega^2\w_h,r_h)_\Gamma| \leq |\mu_p I_0 \omega^2|\, \|\w_h\|_{L^2(\Gamma)} \|r_h\|_{L^2(\Gamma)} \leq |\omega^2|\,|\mu_p I_0|\, \|\xvec_h\|_{\X_h} \|\yvec_h\|_{\X_h}.
\end{equation*}
Grouping the two terms together we have that:
\begin{equation*}
    |B_{\Gamma}(\phi_h, \w_h, v_h, r_h)| \leq  \left[C_3|\omega|(1+|\mu_p  \rho_w|) h^{-1/2} + |\mu_p|\, \|\Drig\|_{L^{\infty}(\Gamma)} + |\omega^2|\,|\mu_p I_0|\right]\|\xvec_h\|_{\X_h} \|\yvec_h\|_{\X_h}.
\end{equation*}

\medskip
\noindent\textbf{Step 6 (Structure skeleton terms, $B_\Sigma$).}
Finally, we analyse the structural internal skeleton terms supported on $\Sigma$. Let $p = \partial E_1 \cap \partial E_2 \in \Sigma$ denote an arbitrary interior face shared by the neighbouring structure elements $E_1, E_2 \in \mathcal{T}_h^\Gamma$. Decomposing the skeletal integral into local facet contributions and applying Cauchy-Schwarz inequality we have 
\begin{align*}
    \left|\mu_p(\mean{\Drig:\bnabla^2\w_h},\jump{\bnabla r_h\otimes\boldsymbol{n}_\Gamma})_{\Sigma}\right| \leq |\mu_p| \sum_{p \in \Sigma} \|\mean{\Drig:\bnabla^2\w_h}\|_{L^2(p)} \|\jump{\bnabla r_h\otimes\boldsymbol{n}_\Gamma}\|_{L^2(p)}.
\end{align*}
Since $\w_h$ and $r_h$ belong to the discrete polynomial space $\hat{\mathcal{V}}_{\Gamma,h}$, their derivatives inherit localized polynomial structures. Using \autoref{lemma:inverse_sigma} we have
\begin{equation*}
    \|\mean{\Drig:\bnabla^2\w_h}\|_{L^2(p)} \leq C_{\rm inv}^{1/2} h_p^{-1/2} \|\Drig\|_{L^\infty(\Gamma)} \|\bnabla^2\w_h\|_{L^2(E_1 \cup E_2)}.
\end{equation*}
Using the fact that each facet $E$ contributes to at most $N_f^\Sigma$ faces, we have that the $\sum_e\|\bnabla^2\w_h\|^2_{E_1\cup E_2}\le N_f^\Sigma\|\bnabla^2\w_h\|^2_{\Gamma}$. Therefore, summing over the skeleton $\Sigma$ and applying the discrete Cauchy-Schwarz inequality for sums yields 
\begin{equation*}
    \begin{aligned}
    \left|\mu_p\big(\mean{\Drig:\bnabla^2 w_h},\jump{\bnabla r_h\otimes\boldsymbol n_\Gamma}\big)_\Sigma\right| \le
\big(C_{\mathrm{inv}}^\Sigma N_f^\Sigma\big)^{1/2}\|\Drig\|_{L^\infty(\Gamma)}\,\|\bnabla^2
w_h\|_{\Gamma}\|\yvec_h\|_{\X_h}.
    \end{aligned}
\end{equation*}
Doing the same for the transposed term, adding both and using $\mu_p=\mu_p^{1/2}\cdot\mu_p^{1/2}$, with $C_6:=2(C_{\mathrm{inv}}^\Sigma N_f^\Sigma)^{1/2}\|\Drig\|_{L^\infty(\Gamma)}$, we have that
\[
\Big|\mu_p\big(\mean{\Drig:\bnabla^2 w_h},\jump{\bnabla r_h\otimes\boldsymbol n_\Gamma}\big)_\Sigma\Big| +
\Big|\mu_p\big(\mean{\Drig:\bnabla^2 r_h},\jump{\bnabla w_h\otimes\boldsymbol n_\Gamma}\big)_\Sigma\Big| \le
C_6|\mu_p|\,\|\mathbf x_h\|_{\mathcal X_h}\|\mathbf y_h\|_{\mathcal X_h}.
\]
Finally, the penalty term on the skeleton $\Sigma$ is bounded by
\[
\Big|\big(\mu_2\jump{\bnabla w_h\otimes\boldsymbol n_\Gamma},\jump{\bnabla r_h\otimes\boldsymbol
n_\Gamma}\big)_\Sigma\Big| \le \sigma_2\|\mathbf
x_h\|_{\mathcal X_h}\|\mathbf y_h\|_{\mathcal X_h}.
\]
Hence $|B_\Sigma(\w_h, r_h)|\le(C_6+\sigma_2)|\mu_p|\|\mathbf x_h\|_{\mathcal X_h}\|\mathbf y_h\|_{\mathcal X_h}$.

\medskip
\noindent\textbf{Step 7 (Assembly).}
Summing all independent contributions via the triangle inequality yields the global upper bound:
\begin{equation*}
    |B(\xvec_h, \yvec_h)| \leq \left( 1 + \bar C_1 + \tilde{C}_2  + \bar{C}_3 + \tilde{C}_4 + \tilde{C}_5 + \tilde{C}_6 \right) \|\xvec_h\|_{\X_h} \|\yvec_h\|_{\X_h}.
\end{equation*}
With $M = \max(1,\bar C_1, \tilde{C}_2, \bar{C}_3,\tilde{C}_4,\tilde{C}_5,\tilde{C}_6)$, with $\tilde{C}_2=(C_2+\sigma_1)$, $\tilde{C}_4=|\mu_p|\, \|\Drig\|_{L^{\infty}(\Gamma)}$, $\tilde{C}_5=|\omega^2|\,|\mu_p I_0|$ and $\tilde{C}_6=(C_6+\sigma_2)|\mu_p|$, results into the statement: there exists a mesh-independent constant $M = \mathcal{O}(1)$ such that 
\begin{equation*}
    |B(\xvec_h, \yvec_h)| \leq M \|\xvec_h\|_{\X_h} \|\yvec_h\|_{\X_h},
\end{equation*}
which completes the proof.
\end{proof}

\subsection{Discrete G\aa rding inequality}\label{sec:garding}\noindent

\begin{theorem}[Discrete G\aa rding inequality]\label{thm:garding}
    Let $\X_h$ be the discrete space equipped with the norm $\|\cdot\|_{\X_h}$ defined in \eqref{eq:energy_norm}. Then, the coupled sesquilinear form $B(\cdot, \cdot)$ defined in \eqref{eq:bilinear_form_analysis} satisfies a discrete Gårding-type inequality for $\mu_p=1/\rho_w$. That is, there exist constants $C_1 > 0$ and $C_2 > 0$ such that for all $\xvec_h = [\phi_h, w_h] \in \X_h$:
\begin{equation}
    \begin{aligned}
\mathrm{Re}\,B(\mathbf x_h,\mathbf x_h) \ge&\ C_1\left(\|\bnabla\phi_h\|_\Omega^2 + \sum_{e\in\Lambda\setminus\Gamma} h_e^{-1}\big\|\jump{\phi_h}\big\|_{L^2(e)}^2 +
\mu_p\|\bnabla^2w_h\|^2_{\Gamma} + \mu_p \sum_{p\in\Sigma}h_p^{-1}
    \big\|\jump{\bnabla w_h\otimes\boldsymbol n_{\mathcal P}}\big\|_{L^2(p)}^2\right)\\
&\ - C_2\left(\|\phi_h\|_\Omega^2 +
\mu_p\|w_h\|^2_{\Gamma}\right).
\label{eq:garding}
    \end{aligned}
\end{equation}
provided the recovery-operator constant $C_R$ of \autoref{lem:fluid_recovery} satisfies the mesh-shape-regularity condition stated in Remark~\ref{rem:CR_smallness}.

In particular, 
\[
  C_1:=\min\Big(1,\ \sigma_1-\sigma_1^\dagger,\ \tfrac12\gamma_{\min},\ \sigma_2-\sigma_2^*,\ \tfrac14-\kappa_{\rm fs}\Big)>0,
  \qquad
  C_2:=\max\big(C_{2,{\rm fs}},\ |I_0\omega^2|+\epsilon^{-1}|\omega|\rho_w\big),
\]
for $\sigma_1>\sigma_1^{\dagger}$, $\sigma_2>\sigma_2^\ast$, with fixed
$\delta:=1/\big(32\alpha(1+C_R)^2\big)$,
$\epsilon := 1/(8|\omega|\kappa_1)$ and $h\le1$, where
\[
\begin{aligned}
&\sigma_1^{\dagger}:=\sigma_1^\ast+\widetilde C_J, \qquad
\sigma_1^\ast:=\frac{2C_{\mathrm{inv}}N_f}{c_0}, \qquad
\sigma_2^\ast:=\frac{2C_{\mathrm{inv}}^\Sigma N_f^\Sigma
\|\Drig\|^2_{L^\infty(\Gamma)}}{c_0\gamma_{\min}}, \\[2pt]
&\kappa_1 := 16(C_{\rm tr}^\Gamma)^2\big[2(1+C_R)^2+4C_R^2\big]+8C_{\rm tr}C_R^2,
\qquad
\kappa_{\rm fs} := 8\alpha C_\delta C_R^2, \\[2pt]
&C_{2,\mathrm{fs}}:=4\alpha C_\delta+32\epsilon|\omega|(C_{\rm tr}^\Gamma)^2, \qquad
\widetilde C_J := C_{J,{\rm fs}} + \epsilon|\omega|C_J',
\end{aligned}
\]
with $C_{J,{\rm fs}}$, $C_J'$ as defined in Step~3 below.
\end{theorem}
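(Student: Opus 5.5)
We take $\yvec_h=\xvec_h$ in \eqref{eq:bilinear_form_analysis} and bound $\mathrm{Re}\,B(\xvec_h,\xvec_h)$ from below term by term, following the same decomposition into $B_\Omega$, $B_{\partial\Omega}$, $B_\Lambda$, the coupling terms on $\Gamma$ and $B_\Sigma$ used in the proof of \autoref{theorem:discrete_continuity}. The diagonal volume terms give $\|\bnabla\phi_h\|_\Omega^2$ and $\mu_p\,\mathrm{Re}(\Drig:\bnabla^2w_h,\bnabla^2w_h)_\Gamma\ge\mu_p\gamma_{\min}\|\bnabla^2w_h\|^2_\Gamma$ by \autoref{ass:rigidity}. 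The mass term $-\mu_p(I_0\omega^2w_h,w_h)_\Gamma$ goes directly into the $C_2$ part.

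\textbf{Radiation operator.} Here $\mathrm{Re}(-K(\phi_h,\phi_h))=\sum_m \mathrm{Im}(k_m)\,C_m^{-1}|(\phi_h,\psi_m)|^2\ge0$, using that $K$ carries the factor $\upi k_m$, the convention $\mathrm{Im}\,k_m\ge0$ of \autoref{ass:modal}, and the choice of $\psi_m$ real for real $k_m$. This term can therefore simply be dropped.

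\textbf{Free surface.} The term $-\alpha\|\phi_h\|^2_{\Gamma_{\rm fs}}$ has the wrong sign. We split $\phi_h=R_h\phi_h+(\phi_h-R_h\phi_h)$ exactly as in Step~2 of the continuity proof. For the conforming part we use the continuous trace inequality with a small $\delta$, $\|R_h\phi_h\|^2_{\partial\Omega}\le\delta\|\bnabla R_h\phi_h\|^2+C_\delta\|R_h\phi_h\|^2$. The nonconforming part is $O(h)$ via the local trace theorem and \autoref{lem:fluid_recovery}. Bounding $R_h\phi_h$ through $\phi_h$, $\bnabla\phi_h$ and $J_f(\phi_h)$ then gives
\[
\alpha\|\phi_h\|^2_{\Gamma_{\rm fs}}\le \big(\tfrac14\|\bnabla\phi_h\|^2+\kappa_{\rm fs}\|\bnabla\phi_h\|^2\big)+C_{J,{\rm fs}}J_f(\phi_h)^2+C_{2,{\rm fs}}\|\phi_h\|^2,
\]
with $\delta=1/(32\alpha(1+C_R)^2)$. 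This is where the smallness condition on $C_R$ (namely $\kappa_{\rm fs}<\tfrac14$) enters.

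\textbf{DG consistency terms.} The symmetric fluid terms contribute $-2\mathrm{Re}(\mean{\partialn\phi_h},\jump{\phi_h})$. We bound this by Cauchy--Schwarz, \autoref{theorem:inverse_inequality} and Young's inequality as $\tfrac12\|\bnabla\phi_h\|^2+2C_{\rm inv}N_f\sum_e h_e^{-1}\|\jump{\phi_h}\|^2$. Since $\mu_1=\sigma_1h^{-1}\ge c_0\sigma_1 h_e^{-1}$, this leaves $(\sigma_1-\sigma_1^*)J_f^2$ after absorption. The plate skeleton terms are handled identically with \autoref{lemma:inverse_sigma}, using half of $\gamma_{\min}\|\bnabla^2 w_h\|^2$ for absorption, which leaves $(\sigma_2-\sigma_2^*)J_p^2$.

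\textbf{Coupling terms.} This is the key step. With $\mu_p=1/\rho_w$ the coupling terms are $(\upi\omega w_h,\jump{\phi_h})_\Gamma-(\upi\omega\jump{\phi_h},w_h)_\Gamma$, which equals $\upi\omega\cdot 2\upi\,\mathrm{Im}(w_h,\jump{\phi_h})=-2\omega\,\mathrm{Im}(\cdots)$. This is real and is not cancelled in the real part. So, contrary to first hope, exact cancellation fails, and these terms must be absorbed. We bound them by $2|\omega|\|w_h\|_\Gamma\|\jump{\phi_h}\|_\Gamma\le\epsilon^{-1}|\omega|\rho_w\mu_p\|w_h\|^2+\epsilon|\omega|\|\jump{\phi_h}\|^2_\Gamma$. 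We then control $\|\jump{\phi_h}\|_\Gamma^2$ by the Step~4 argument, applying the trace on $\Omega^\pm$ to $R_h\phi_h$ (with \autoref{ass:gamma_splitting}) plus the recovery remainder. This gives $\|\jump{\phi_h}\|^2_\Gamma\le\kappa_1\|\bnabla\phi_h\|^2+C_J'J_f^2+32(C^\Gamma_{\rm tr})^2\|\phi_h\|^2$, after using a $\delta$-weighted trace so that the $L^2$ piece carries the constant. Choosing $\epsilon=1/(8|\omega|\kappa_1)$ makes the gradient contribution $\tfrac18\|\bnabla\phi_h\|^2$.

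\textbf{Collection.} The gradient coefficient becomes $1-\tfrac12-\tfrac18-\tfrac14\cdots$, handled through the stated $\tfrac14-\kappa_{\rm fs}$. The jump coefficient is $\sigma_1-\sigma_1^\dagger$ and the plate coefficients are $\tfrac12\gamma_{\min}$ and $\sigma_2-\sigma_2^*$, which gives $C_1$ and $C_2$ as stated.

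The main obstacle is this bookkeeping of fractions of $\|\bnabla\phi_h\|^2$. It has to stay positive after the free-surface and interface-trace absorptions, and that is exactly why the smallness assumption on $C_R$ (Remark~\ref{rem:CR_smallness}) is required.
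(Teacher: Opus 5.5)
Your argument follows the paper's proof step for step. The shared ingredients are:
\begin{itemize}
\item the sign of $\mathrm{Re}(-K(\phi_h,\phi_h))$ from $\mathrm{Im}\,k_m\ge0$;
\item Young's inequality plus the inverse inequalities on both skeletons;
\item the splitting $\phi_h=R_h\phi_h+(\phi_h-R_h\phi_h)$ on $\Gamma_{\rm fs}$ with $\delta=1/(32\alpha(1+C_R)^2)$;
\item the observation that at $\mu_p=1/\rho_w$ the coupling terms equal the real number $-2\omega\,\mathrm{Im}(w_h,\jump{\phi_h})_\Gamma$ and must be absorbed, not cancelled;
\item the Young step with $\epsilon=1/(8|\omega|\kappa_1)$ combined with the $\Omega^\pm$-trace bound for $\|\jump{\phi_h}\|_\Gamma^2$.
\end{itemize}

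What does not close is the gradient ledger. You identify it as the crux yourself and then pass over it with ``$\cdots$''. With your $\delta$, the conforming part of the free-surface term contributes $2\alpha\cdot\delta\cdot 2(1+C_R)^2\|\bnabla\phi_h\|^2_\Omega=\tfrac18\|\bnabla\phi_h\|^2_\Omega$, not $\tfrac14$. The first factor $2$ comes from $\|\phi_h\|^2\le 2\|R_h\phi_h\|^2+2\|\phi_h-R_h\phi_h\|^2$; the factor $2(1+C_R)^2$ comes from squaring the bound on $\|\bnabla R_h\phi_h\|$.

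With the $\tfrac14$ you wrote, the bookkeeping gives $1-\tfrac12-(\tfrac14+\kappa_{\rm fs})-\tfrac18=\tfrac18-\kappa_{\rm fs}$. That requires $\kappa_{\rm fs}<\tfrac18$ and does not produce the $C_1$ in the statement. With $\tfrac18$ you get $\tfrac12-(\tfrac18+\kappa_{\rm fs})-\tfrac18=\tfrac14-\kappa_{\rm fs}$, which is the paper's Step~5 and the condition of Remark~\ref{rem:CR_smallness}.

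The nonconforming remainder is also not harmless just because it is ``$O(h)$''. It contributes $8\alpha C_{\rm tr}C_R^2 h\|\bnabla\phi_h\|^2_\Omega$, and since only $h\le 1$ is assumed, this term must be carried in the ledger, e.g.\ folded into $\kappa_{\rm fs}$. The paper's Step~3 drops this gradient term as well. A careful version of either proof therefore needs a slightly larger $\kappa_{\rm fs}$.

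Smaller points:
\begin{itemize}
\item Taking $\mu_1=\sigma_1h^{-1}\ge c_0\sigma_1h_e^{-1}$ leaves $(c_0\sigma_1-2C_{\rm inv}N_f)J_f(\phi_h)^2=c_0(\sigma_1-\sigma_1^\ast)J_f(\phi_h)^2$, not $(\sigma_1-\sigma_1^\ast)J_f(\phi_h)^2$. The paper instead uses $\mu_1|_e=\sigma_1/h_e$ in this proof, giving $\sigma_1-2C_{\rm inv}N_f\ge\sigma_1-\sigma_1^\ast$ with no extra factor. Your global-$h$ reading matches the continuity proof, but then $C_1$ carries a factor $c_0$.
\item On $\Gamma$ no $\delta$-weighted trace is used or needed. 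The paper applies the plain $H^1(\Omega^\pm)$ trace, which is what produces $\kappa_1$ and $32(C_{\rm tr}^\Gamma)^2$; the smallness comes entirely from $\epsilon$.
\item In the radiation step, what turns $K(\phi_h,\phi_h)$ into a sum over $|(\phi_h,\psi_m)|^2$ is not the realness of $\psi_m$, which holds for every $m$ anyway. It is reading $K$ sesquilinearly, with the $v$-factor conjugated, as the paper implicitly does.
\end{itemize}
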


\begin{proof}Let us evaluate the sesquilinear form by choosing the test functions to be equal to the trial functions, setting $\yvec_h = \xvec_h$. Substituting these into the global expression for $B(\xvec_h, \xvec_h)$ yields 
\begin{equation}
\begin{aligned}
B(\mathbf x_h,\mathbf x_h) = &\ \|\bnabla\phi_h\|^2_\Omega - (\alpha\phi_h,\phi_h)_{\Gamma_{\rm fs}} - K(\phi_h,\phi_h)
- 2\big(\mean{\partial_n\phi_h},\jump{\phi_h}\big)_{\Lambda\setminus\Gamma} + \big(\mu_1\jump{\phi_h},\jump{\phi_h}
\big)_{\Lambda\setminus\Gamma} \\
&+ (\upi\omega w_h,\jump{\phi_h})_\Gamma + \mu_p\Big[(\Drig:\bnabla^2w_h,\bnabla^2w_h)_\Gamma -
(I_0\omega^2w_h,w_h)_\Gamma - (\upi\omega\rho_w\jump{\phi_h},w_h)_\Gamma \\
&- 2\big(\mean{\Drig:\bnabla^2w_h},\jump{\bnabla w_h\otimes\boldsymbol n_\Gamma}\big)_\Sigma + \big(\mu_2\jump{\bnabla
w_h\otimes\boldsymbol n_\Gamma},\jump{\bnabla w_h\otimes\boldsymbol n_\Gamma}\big)_\Sigma\Big].
\end{aligned}
\label{eq:step1}
\end{equation}
We proceed by bounding $\mathrm{Re}\,B(\mathbf x_h,\mathbf x_h)$ from below in four passes over \eqref{eq:step1}: the radiation
term, the fluid-skeleton block, the Robin/coupling boundary block, and the structure block.

\medskip
\noindent\textbf{Step 1 (Radiation term).}
Writing out $K(\phi_h,\phi_h)=\sum_{m=0}^M\frac{\upi k_m}{C_m}\big(|(\phi_h,\psi_m)_{\Gamma_{\rm in}}|^2+
|(\phi_h,\psi_m)_{\Gamma_{\rm out}}|^2\big)$, every bracketed quantity is real and nonnegative and every $C_m>0$ is
real, so with $k_m=\mathrm{Re}(k_m)+\upi\,\mathrm{Im}(k_m)$,
\[
\mathrm{Re}(\upi k_m) = -\mathrm{Im}(k_m) \implies \mathrm{Re}\,K(\phi_h,\phi_h) = -\sum_{m=0}^M
\frac{\mathrm{Im}(k_m)}{C_m}\Big(|(\phi_h,\psi_m)_{\Gamma_{\rm in}}|^2+|(\phi_h,\psi_m)_{\Gamma_{\rm out}}|^2\Big)
\le 0,
\]
since $\mathrm{Im}(k_m)\ge0$ for every $m$ by the outgoing/decaying branch convention. Hence
$\mathrm{Re}(-K(\phi_h,\phi_h))\ge0$, and this term contributes a nonnegative amount to $\mathrm{Re}\,B(\mathbf
x_h,\mathbf x_h)$.

\medskip
\noindent\textbf{Step 2 (Fluid terms).}
Given a skeleton facet $e\in\Lambda\setminus\Gamma$ shared by two  $K_1,K_2$, using Young's inequality, we have that for any $\delta_1>0$,
\[
2\big|\big(\mean{\partial_n\phi_h},\jump{\phi_h}\big)_e\big| \le 2\,h_e^{1/2}\|\mean{\partial_n\phi_h}\|_{L^2(e)}
h_e^{-1/2}\|\jump{\phi_h}\|_{L^2(e)} \le \delta_1 h_e\|\mean{\partial_n\phi_h}\|^2_{L^2(e)} +
\delta_1^{-1}h_e^{-1}\left\|\jump{\phi_h}\right\|^2_{L^2(e)}.
\]
Since $\mean{\partial_n\phi_h}=\tfrac12(\partial_n\phi_h|_{K_1}+\partial_n\phi_h|_{K_2})$, the inverse inequality
for fluxes (\autoref{theorem:inverse_inequality}) applied on each side and shape-regularity ($h_e\sim h_{K_1}\sim h_{K_2}$) give
$h_e\|\mean{\partial_n\phi_h}\|^2_{L^2(e)}\le C_{\mathrm{inv}}\|\bnabla\phi_h\|^2_{L^2(K_1\cup K_2)}$. Summing over
$e\in\Lambda\setminus\Gamma$ and using that each element $K$ is shared by at most $N_f$ facets,
\[
\sum_{e\in\Lambda\setminus\Gamma}h_e\|\mean{\partial_n\phi_h}\|^2_{L^2(e)} \le C_{\mathrm{inv}}N_f
\|\bnabla\phi_h\|^2_\Omega,
\]
so that, summing the Young bound over all facets,
\[
2\big|\big(\mean{\partial_n\phi_h},\jump{\phi_h}\big)_{\Lambda\setminus\Gamma}\big| \le \delta_1C_{\mathrm{inv}}N_f
\|\bnabla\phi_h\|^2_\Omega + \delta_1^{-1}\sum_{e\in\Lambda\setminus\Gamma} h_e^{-1}\big\|\jump{\phi_h}\big\|_{L^2(e)}^2.
\]
Recalling that $\mu_1|_e=\sigma_1/h_e$, so that $(\mu_1\jump{\phi_h},\jump{\phi_h})_{\Lambda\setminus\Gamma}=
\sigma_1\sum_{e\in\Lambda\setminus\Gamma} h_e^{-1}\big\|\jump{\phi_h}\big\|_{L^2(e)}^2$, and combining with the first term $\|\bnabla\phi_h\|^2_\Omega$, we have that
\begin{align*}
\mathrm{Re}\Big[\|\bnabla\phi_h\|^2_\Omega - 2\big(\mean{\partial_n\phi_h},\jump{\phi_h}\big)_{\Lambda\setminus\Gamma} +
\big(\mu_1\jump{\phi_h},\jump{\phi_h}\big)_{\Lambda\setminus\Gamma}\Big] \ge&\ (1-\delta_1C_{\mathrm{inv}}N_f)
\|\bnabla\phi_h\|^2_\Omega \\
&\ + (\sigma_1-\delta_1^{-1})\sum_{e\in\Lambda\setminus\Gamma} h_e^{-1}\big\|\jump{\phi_h}\big\|_{L^2(e)}^2.
\end{align*}
Taking $\delta_1=1/(2C_{\mathrm{inv}}N_f)$ and defining  $\sigma_1^\ast=2C_{\mathrm{inv}}N_f/c_0\ge2C_{\mathrm{inv}}N_f$ we have that $\sigma_1>\sigma_1^\ast\ge\delta_1^{-1}$. Therefore, 
\[
\mathrm{Re}\Big[
    \|\bnabla\phi_h\|^2_\Omega - 2\big(\mean{\partial_n\phi_h},\jump{\phi_h}\big)_{\Lambda\setminus\Gamma} +
\big(\mu_1\jump{\phi_h},\jump{\phi_h}\big)_{\Lambda\setminus\Gamma}
\Big]
\ge \tfrac12\|\bnabla\phi_h\|^2_\Omega + (\sigma_1-\sigma_1^\ast)\sum_{e\in\Lambda\setminus\Gamma} h_e^{-1}\big\|\jump{\phi_h}\big\|_{L^2(e)}^2,
\]
with $\sigma_1 - \sigma_1^\ast > 0$ by definition.

\noindent\textbf{Step 3 (Free surface and fluid--structure interaction terms).}
Let us write $\phi_h = R_h\phi_h + (\phi_h-R_h\phi_h)$ and bound the two terms on $\Gamma_{\rm fs}\subset\partial\Omega$ separately. That is, since $\phi_h-R_h\phi_h$ is a polynomial on each $K$, the local trace theorem (\autoref{theorem:trace}), summed over $K\cap\Gamma_{\rm fs}\ne\emptyset$, together with
\autoref{lem:fluid_recovery}, gives (for the recovery constant $C_R>0$ from that lemma)
\[
\begin{aligned}
\|\phi_h-R_h\phi_h\|^2_{L^2(\Gamma_{\rm fs})}
&\le C_{\mathrm{tr}}\Big(h^{-1}\|\phi_h-R_h\phi_h\|^2_{L^2(\Omega)}+h\|\bnabla(\phi_h-R_h\phi_h)\|^2_{L^2(\Omega)}\Big)\\
&\le 2C_{\mathrm{tr}}C_R^2\,h\,\big(\|\bnabla\phi_h\|_{L^2(\Omega)}+J_f(\phi_h)\big)^2
\le 4C_{\mathrm{tr}}C_R^2\,h\,\big(\|\bnabla\phi_h\|^2_{L^2(\Omega)}+J_f(\phi_h)^2\big).
\end{aligned}
\]

Similarly, since $R_h\phi_h\in H^1(\Omega\setminus\Gamma)$ is conforming and $\Gamma_{\rm fs}\subset\partial\Omega$ is disjoint from the interior interface $\Gamma$, the continuous trace inequality applies with an $h$-independent constant $C_\delta$: for any fixed $\delta>0$,
\[
\|R_h\phi_h\|^2_{L^2(\Gamma_{\rm fs})} \le \delta\,\|\bnabla R_h\phi_h\|^2_{L^2(\Omega)} + C_\delta\,\|R_h\phi_h\|^2_{L^2(\Omega)}.
\]
By the triangle inequality and \autoref{lem:fluid_recovery}, we have
\begin{equation*}
\begin{aligned}
\|\bnabla R_h\phi_h\|_{L^2(\Omega)} \le&\ (1+C_R)\|\bnabla\phi_h\|_{L^2(\Omega)} + C_RJ_f(\phi_h), \\
\|R_h\phi_h\|_{L^2(\Omega)} \le&\ \|\phi_h\|_{L^2(\Omega)} + C_R\|\bnabla\phi_h\|_{L^2(\Omega)} + C_RJ_f(\phi_h).
\end{aligned}
\end{equation*}
Squaring via $(a+b)^2\le2a^2+2b^2$ as in Step-2 of \autoref{theorem:discrete_continuity} and grouping terms we have that
\[
\|R_h\phi_h\|^2_{L^2(\Gamma_{\rm fs})}
\le \big[2\delta(1+C_R)^2+4C_\delta C_R^2\big]\|\bnabla\phi_h\|^2_{L^2(\Omega)}
+ 2C_\delta\|\phi_h\|^2_{L^2(\Omega)}
+ 2C_R^2(\delta+2C_\delta)\,J_f(\phi_h)^2.
\]
Combining the two bounds via $(a+b)^2\le2a^2+2b^2$,
\[
\alpha\|\phi_h\|^2_{L^2(\Gamma_{\rm fs})} \le \big[4\alpha\delta(1+C_R)^2+8\alpha C_\delta C_R^2\big]\|\bnabla\phi_h\|^2_{L^2(\Omega)}
+ 4\alpha C_\delta\,\|\phi_h\|^2_{L^2(\Omega)} + C_{J,\mathrm{fs}}(h)\,J_f(\phi_h)^2,
\]
where
\[
C_{J,\mathrm{fs}}(h):=4\alpha C_R^2\big(C_{\mathrm{tr}}h+\delta+2C_\delta\big).
\]
Note that fixing $\delta:=1/\big(32\alpha(1+C_R)^2\big)$, i.e. a constant depending on $\alpha,\Omega$, and the
mesh-shape-regularity constant $C_R$ only with $\alpha>0$. Thus, $4\alpha\delta(1+C_R)^2=\tfrac18$ and the part $8\alpha C_\delta C_R^2$ is then a fixed, $h$-independent number that does not vanish as $\delta$ is refined further. Here, we denote it
\begin{equation}
\label{eq:kappa_fs}
\kappa_{\rm fs} := 8\alpha C_\delta C_R^2.
\end{equation}
Then,
\[
\mathrm{Re}\Big[-(\alpha\phi_h,\phi_h)_{\Gamma_{\rm fs}}\Big] \ge -\Big(\tfrac18+\kappa_{\rm fs}\Big)\|\bnabla\phi_h\|^2_{L^2(\Omega)} -4\alpha C_\delta\,\|\phi_h\|^2_{L^2(\Omega)} - C_{J,\mathrm{fs}}(h)\,J_f(\phi_h)^2.
\]

For the choice $\mu_p=1/\rho_w$, the two coupling contributions appearing in \eqref{eq:step1} are
\[
(\upi\omega w_h,\jump{\phi_h})_\Gamma
-
(\upi\omega\jump{\phi_h},w_h)_\Gamma
=
-2\omega\,\operatorname{Im}(z_h).
\]
Since $\operatorname{Im}(z_h)\in\mathbb R$, the right-hand side is purely real. Consequently,
\[
\operatorname{Re}
\Big[
(\upi\omega w_h,\jump{\phi_h})_\Gamma
-
(\upi\omega\jump{\phi_h},w_h)_\Gamma
\Big]
=
-2\omega\,\operatorname{Im}(w_h,\jump{\phi_h})_\Gamma.
\]

Let us now apply the identical splitting to $\jump{\phi_h}=\jump{R_h\phi_h}+\jump{\phi_h-R_h\phi_h}$ on $\Gamma$. By the continuous
trace theorem applied separately on the two sides $\Omega^+,\Omega^-$ of
$\Gamma$ (constants $C_{\rm tr}^{\Omega^\pm}$, $h$-independent) and the
triangle inequality,
\[
  \|\jump{R_h\phi_h}\|_{L^2(\Gamma)} \le
  2C_{\rm tr}^\Gamma\|R_h\phi_h\|_{H^1(\Omega)},
  \qquad C_{\rm tr}^\Gamma:=\max(C_{\rm tr}^{\Omega^+},C_{\rm tr}^{\Omega^-}).
\]
By the \autoref{lem:fluid_recovery} and the same squaring argument as in Step-4 of \autoref{theorem:discrete_continuity}, 
\[
\|R_h\phi_h\|^2_{H^1(\Omega)} \le 2\|\phi_h\|^2_\Omega+\big[2(1+C_R)^2+4C_R^2\big]\|\bnabla\phi_h\|^2_\Omega+6C_R^2J_f(\phi_h)^2,
\]
therefore, by the continuous trace theorem on $\Omega^\pm$,
\[
\|\jump{R_h\phi_h}\|^2_{L^2(\Gamma)}
\le 8(C_{\rm tr}^\Gamma)^2\Big(\big[2(1+C_R)^2+4C_R^2\big]\|\bnabla\phi_h\|^2_\Omega+2\|\phi_h\|^2_\Omega+6C_R^2J_f(\phi_h)^2\Big).
\]
Together with the remainder bound
$\|\jump{\phi_h-R_h\phi_h}\|^2_{L^2(\Gamma)}\le4C_{\rm tr}C_R^2\,h\,(\|\bnabla\phi_h\|^2_\Omega+J_f(\phi_h)^2)$
and $\jump{\phi_h}=\jump{R_h\phi_h}+\jump{\phi_h-R_h\phi_h}$, we obtain
\[
  \|\jump{\phi_h}\|^2_{L^2(\Gamma)} \le
  \kappa_1\,\|\bnabla\phi_h\|^2_\Omega
  + 32(C_{\rm tr}^\Gamma)^2\|\phi_h\|^2_\Omega
  + C_J'\,J_f(\phi_h)^2,
\]
with
\[
\kappa_1 := 16(C_{\rm tr}^\Gamma)^2\big[2(1+C_R)^2+4C_R^2\big]+8C_{\rm tr}C_R^2,
\qquad
C_J' := 96(C_{\rm tr}^\Gamma)^2C_R^2 + 8C_{\rm tr}C_R^2.
\]

By Young's inequality with parameter $\epsilon>0$,
\[  2|\omega|\|w_h\|_{L^2(\Gamma)}\|\jump{\phi_h}\|_{L^2(\Gamma)}
  \le \epsilon^{-1}|\omega|\|w_h\|^2_{L^2(\Gamma)} + \epsilon|\omega|\|\jump{\phi_h}\|^2_{L^2(\Gamma)}.
\]
Now using $\rho_w\mu_p=1$ to rewrite
$\epsilon^{-1}|\omega|\|w_h\|^2_{L^2(\Gamma)}=\epsilon^{-1}|\omega|\rho_w\mu_p\|w_h\|^2_{\Gamma}$, we obtain
\[
\begin{aligned}
2|\omega|\|w_h\|_{L^2(\Gamma)}\|\jump{\phi_h}\|_{L^2(\Gamma)}
  \le&\ \epsilon|\omega|\kappa_1\|\bnabla\phi_h\|^2_\Omega
  + 32\epsilon|\omega|(C_{\rm tr}^\Gamma)^2\|\phi_h\|^2_\Omega
  + \epsilon^{-1}|\omega|\rho_w\mu_p\|w_h\|^2_{\Gamma}
  + \epsilon|\omega|C_J'J_f(\phi_h)^2.
  \end{aligned}
\]

Let us fix $\epsilon:=1/\big(8|\omega|\kappa_1\big)$, with $\omega>0$, so that $\epsilon|\omega|\kappa_1=\tfrac18$.
Combined with Step~3, the total gradient-term coefficient consumed so far is $\Big(\tfrac18+\kappa_{\rm fs}\Big) + \tfrac18 \;=\; \tfrac14+\kappa_{\rm fs}$. Therefore, adding the contributions,
\[
\begin{aligned}
  \mathrm{Re}\Big[-(\alpha\phi_h,\phi_h)_{\Gamma_{\rm fs}}
  + (i\omega w_h,\phi_h)_\Gamma - \mu_p(i\omega\rho_w\phi_h,w_h)_\Gamma\Big]
  \ge&\ -\Big(\tfrac14+\kappa_{\rm fs}\Big)\|\bnabla\phi_h\|^2_\Omega - C_{2,{\rm fs}}\|\phi_h\|^2_\Omega\\
  &\ - \epsilon^{-1}|\omega|\rho_w\mu_p\|w_h\|^2_{\Gamma}
  - \widetilde C_J\,J_f(\phi_h)^2, 
  \end{aligned}
\]
where $C_{2,{\rm fs}}:=4\alpha C_\delta+32\epsilon|\omega|(C_{\rm tr}^\Gamma)^2$
and $\widetilde C_J:=C_{J,{\rm fs}}+\epsilon|\omega|C_J'$, both
$h$-independent.

\begin{remark}\label{rem:CR_smallness}
Note that $\kappa_{\rm fs} \;<\; \tfrac14$ in order for the gradient coefficient $\tfrac14+\kappa_{\rm fs}$ to remain strictly below $\tfrac12$. This requires a small-enough $C_R$ mesh-shape-regularity constant.
\end{remark}

\noindent\textbf{Step 4 (Structure terms).} Through the use of \autoref{ass:rigidity}, we note that
\[
(\Drig:\bnabla^2w_h,\bnabla^2w_h)_\Gamma \ge \gamma_{\min}
\|\bnabla^2w_h\|^2_{\Gamma},
\]
which is a real, nonnegative quantity. Following the same procedure as in Step~2, for $p\in\Sigma$ shared by $E_1,E_2$, Young's inequality gives,
for any $\delta_2>0$,
\[
2\big|\big(\mean{\Drig:\bnabla^2w_h},\jump{\bnabla w_h\otimes\boldsymbol n_\Gamma}\big)_p\big| \le \delta_2h_p
\|\mean{\Drig:\bnabla^2w_h}\|^2_{L^2(p)} + \delta_2^{-1}h_p^{-1}\|\jump{\bnabla w_h\otimes\boldsymbol n_\Gamma}\|
^2_{L^2(p)},
\]
and, by the inverse Hessian-trace inequality of \autoref{lemma:inverse_sigma} applied on each side and shape-regularity,
$h_p\|\mean{\Drig:\bnabla^2w_h}\|^2_{L^2(p)}\le C_{\mathrm{inv}}^\Sigma\|\Drig\|^2_{L^\infty(\Gamma)}\|\bnabla^2w_h\|
^2_{L^2(E_1\cup E_2)}$. Summing over $p\in\Sigma$, with each element counted at most $N_f^\Sigma$ times,
\[
2\big|\big(\mean{\Drig:\bnabla^2w_h},\jump{\bnabla w_h\otimes\boldsymbol n_\Gamma}\big)_\Sigma\big| \le \delta_2
C_{\mathrm{inv}}^\Sigma N_f^\Sigma\|\Drig\|^2_{L^\infty(\Gamma)}\|\bnabla^2w_h\|^2_{\Gamma} +
\delta_2^{-1}\sum_{p\in\Sigma}h_p^{-1}
    \big\|\jump{\bnabla w_h\otimes\boldsymbol n_{\mathcal P}}\big\|_{L^2(p)}^2.
\]
Recalling that $\mu_2|_p=\sigma_2/h_p$, combining all the rigidity-related terms we obtain
\begin{align*}
(\Drig:\bnabla^2w_h,\bnabla^2w_h)_\Gamma -&\ 2\big(\mean{\Drig:\bnabla^2w_h},\jump{\bnabla w_h\otimes\boldsymbol n_\Gamma}
\big)_\Sigma + \big(\mu_2\jump{\bnabla w_h\otimes\boldsymbol n_\Gamma},\jump{\bnabla w_h\otimes\boldsymbol n_\Gamma}
\big)_\Sigma \\
\ge&\ \left(\gamma_{\min}-\delta_2C_{\mathrm{inv}}^\Sigma N_f^\Sigma\|\Drig\|^2_{L^\infty(\Gamma)}\right)
\|\bnabla^2w_h\|^2_{\Gamma} + \left(\sigma_2-\delta_2^{-1}\right)\sum_{p\in\Sigma}h_p^{-1}
    \big\|\jump{\bnabla w_h\otimes\boldsymbol n_{\mathcal P}}\big\|_{L^2(p)}^2.
\end{align*}
Taking $\delta_2=\gamma_{\min}/(2C_{\mathrm{inv}}^\Sigma N_f^\Sigma\|\Drig\|^2_{L^\infty(\Gamma)})$ and defining $\sigma_2^\ast=2C_{\mathrm{inv}}^\Sigma N_f^\Sigma\|\Drig\|^2_{L^\infty(\Gamma)}/(c_0\gamma_{\min})$, we note that 
$\sigma_2>\sigma_2^\ast\ge\delta_2^{-1}$. Therefore,
\begin{align*}
(\Drig:\bnabla^2w_h,\bnabla^2w_h)_\Gamma -&\ 2\big(\mean{\Drig:\bnabla^2w_h},\jump{\bnabla w_h\otimes\boldsymbol n_\Gamma}
\big)_\Sigma + \big(\mu_2\jump{\bnabla w_h\otimes\boldsymbol n_\Gamma},\jump{\bnabla w_h\otimes\boldsymbol n_\Gamma}
\big)_\Sigma \\
\ge&\ \tfrac12\gamma_{\min}\|\bnabla^2w_h\|^2_{\Gamma} + (\sigma_2-\sigma_2^\ast)\sum_{p\in\Sigma}h_p^{-1}
    \big\|\jump{\bnabla w_h\otimes\boldsymbol n_{\mathcal P}}\big\|_{L^2(p)}^2.
\end{align*}
Finally, multiplying by $\mu_p>0$ and adding the mass term bound
$-\mu_p(I_0\omega^2w_h,w_h)_\Gamma=-\mu_p I_0\omega^2\|w_h\|^2_{\Gamma}\ge-|I_0\omega^2|\,\mu_p\|w_h\|^2_{\Gamma}$,
\begin{align*}
\mathrm{Re}&\left(\mu_p\left[(\Drig:\bnabla^2w_h,\bnabla^2w_h)_\Gamma - (I_0\omega^2w_h,w_h)_\Gamma - 2\big(\mean{\Drig:\bnabla^2w_h},\jump{\bnabla w_h\otimes\boldsymbol n_\Gamma}\big)_\Sigma\right.\right. \\
+&\ \left.\left.\big(\mu_2\jump{\bnabla w_h\otimes\boldsymbol
n_\Gamma},\jump{\bnabla w_h\otimes\boldsymbol n_\Gamma}\big)_\Sigma\right]\right) \\
\ge&\ \mu_p\Big(\tfrac12\gamma_{\min}
\|\bnabla^2w_h\|^2_{\Gamma} + (\sigma_2-\sigma_2^\ast)\sum_{p\in\Sigma}h_p^{-1}
|    \big\|\jump{\bnabla w_h\otimes\boldsymbol n_{\mathcal P}}\big\|_{L^2(p)}^2\Big) - |I_0\omega^2|\,\mu_p\|w_h\|^2_{\Gamma}.
\end{align*}

\noindent\textbf{Step 5 (Assembly).}
The final step is to bring together all the bounds from Steps~1--4. Combining those estimates gives
\[
\begin{aligned}
\mathrm{Re}\,B(\mathbf x_h,\mathbf x_h) \ge\ &\ \Big(\tfrac14-\kappa_{\rm fs}\Big)\|\bnabla\phi_h\|^2_\Omega +
\big(\sigma_1-\sigma_1^\ast-\widetilde C_J\big)\sum_{e\in\Lambda\setminus\Gamma} h_e^{-1}\big\|\jump{\phi_h}\big\|_{L^2(e)}^2 \\
&\ + \mu_p\Big(\tfrac12\gamma_{\min}\|\bnabla^2w_h\|^2_{\Gamma}+(\sigma_2-\sigma_2^\ast)\sum_{p\in\Sigma}h_p^{-1}
    \big\|\jump{\bnabla w_h\otimes\boldsymbol n_{\mathcal P}}\big\|_{L^2(p)}^2\Big) \\
&\ - C_{2,\mathrm{fs}}\|\phi_h\|^2_\Omega -\big(|I_0\omega^2|+|\omega|\rho_w\big)\mu_p\|w_h\|^2_{\Gamma},
\end{aligned}
\]
where $\tfrac14-\kappa_{\rm fs}>0$ by Remark~\ref{rem:CR_smallness}.

Defining $\sigma_1^{\dagger}:=\sigma_1^\ast+\widetilde C_J$, 
for $\sigma_1>\sigma_1^{\dagger}$ and  $\sigma_2>\sigma_2^*$,
\[
\begin{aligned}
  \mathrm{Re}\,B(x_h,x_h) \ge& C_1\Big(\|\bnabla\phi_h\|^2_\Omega
  +\sum_eh_e^{-1}\|\jump{\phi_h}\|^2_{L^2(e)}
  +\mu_p\|\bnabla^2w_h\|^2_{\Gamma}
  +\mu_p\sum_ph_p^{-1}\|\jump{w_h\otimes n_P}\|^2_{L^2(p)}\Big)\\
  &-C_2\big(\|\phi_h\|^2_\Omega+\mu_p\|w_h\|^2_{\Gamma}\big),
  \end{aligned}
\]
with
\[
  C_1:=\min\Big(1,\ \sigma_1-\sigma_1^\dagger,\ \tfrac12\gamma_{\min},\ \sigma_2-\sigma_2^*,\ \tfrac14-\kappa_{\rm fs}\Big)>0,
  \qquad
  C_2:=\max\big(C_{2,{\rm fs}},\ |I_0\omega^2|+\epsilon^{-1}|\omega|\rho_w\big).
\]
This is exactly \eqref{eq:garding}, completing the proof.
\end{proof}

\subsection{Extension to the exact solution}\label{sec:extension}\noindent
\autoref{theorem:discrete_continuity} bounds $B(\cdot,\cdot)$ only on $\X_h\times\X_h$: two of its five blocks,
$B_\Lambda$ and $B_\Sigma$, control the mean-flux and mean-moment terms via \autoref{theorem:inverse_inequality}
and \autoref{lemma:inverse_sigma} respectively, both of which require their argument to be a piecewise
\emph{polynomial}. Since the exact solution $\xvec$ is not, in general, a polynomial, these two bounds do not
extend directly to pairs $(\xvec,\yvec_h)$ or $(\xvec-\mathbf z_h,\yvec_h)$ with $\mathbf z_h\in\X_h$, which is a requirement in further theorems, e.g. \autoref{thm:wellposed}. Note also that the recovery operators $R_h$
and $S_h$ used in \autoref{theorem:discrete_continuity} extend directly to the broken-regularity exact solution, since their construction only requires element-wise
$H^1$/$H^2$ regularity, not polynomiality. In what follows we extend the continuity
bound to the exact solution $\mathbf x\in\mathcal X$. 

For $\mathbf u=[\varphi,\chi]$ with $\varphi|_K\in H^2(K)$, $\chi|_E\in H^3(E)$ for every $K\in\mathcal T_h$,
$E\in\mathcal T_h^\Gamma$, define the regularity seminorm
\begin{equation}
\label{eq:reg_seminorm}
\mathcal R(\mathbf u)^2 := \sum_{K\in\mathcal T_h}\|\bnabla^2\varphi\|^2_{L^2(K)} + \mu_p\sum_{E\in\mathcal
T_h^\Gamma}\|\bnabla^3\chi\|^2_{L^2(E)},
\end{equation}
and the augmented (broken) norm on $\mathcal X(h):=\X_h+\{\mathbf u: \mathcal R(\mathbf u)<\infty\}$,
\begin{equation}
\label{eq:augmented_norm}
\vertiii{\mathbf u}_h^2 := \|\mathbf u\|_{\X_h}^2 + h\,\mathcal R(\mathbf u)^2.
\end{equation}
With this definition, $\vertiii{\mathbf u}_h \ge\|\mathbf u\|_{\X_h}$, with equality up to an $\mathcal O(h^{1/2})$ correction whenever $\mathcal R(\mathbf u)<\infty$.

\begin{lemma}[Extended continuity]\label{lem:cont_ext}
Let $\mathbf u=[\varphi,\chi]\in\mathcal X(h)$. Then, for every $\yvec_h=[v_h,r_h]\in\X_h$,
\begin{equation}
\label{eq:cont_ext}
|B(\mathbf u,\yvec_h)| \le \widetilde M\,\vertiii{\mathbf u}_h\,\|\yvec_h\|_{\X_h},
\end{equation}
with a constant $\widetilde M=\mathcal O(1)$ independent of mesh size.
\end{lemma}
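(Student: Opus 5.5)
The plan is to redo the term-by-term decomposition used in the proof of \autoref{theorem:discrete_continuity}, writing $B(\mathbf u,\yvec_h)=B_\Omega+B_{\partial\Omega}+B_\Lambda+B_\Gamma+B_\Sigma$ with $\mathbf u=[\varphi,\chi]$ in the trial slot. All blocks except two are bounded exactly as before, without any polynomial structure of the trial function. These are $B_\Omega$ (Cauchy--Schwarz), the penalty parts of $B_\Lambda$ and $B_\Sigma$, the mass and rigidity volume terms of $B_\Gamma$, and the radiation and free-surface terms of $B_{\partial\Omega}$. The same holds for the coupling terms, including the transposed symmetry terms in which the test function carries the mean.

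For the boundary and coupling terms the only ingredient that used polynomiality of $\phi_h$ was the trace bound on $\phi_h-R_h\phi_h$. For $\varphi$ element-wise $H^1$ (indeed $H^2$), I would apply \autoref{theorem:trace} elementwise directly. For $\varphi\in\X_h+\{\mathcal R<\infty\}$ I would use the remark before the lemma that $R_h$ extends to element-wise $H^1$ functions with the bounds of \autoref{lem:fluid_recovery}. This gives $\|\varphi\|_{L^2(\partial\Omega)}$ and $\|\jump{\varphi}\|_{L^2(\Gamma)}$ bounded by $C\|\mathbf u\|_{\X_h}$, exactly as in Steps 2 and 4. The symmetry terms $(\mean{\partialn v_h},\jump{\varphi})$ and $(\mean{\Drig:\bnabla^2 r_h},\jump{\bnabla\chi\otimes\boldsymbol n_\Gamma})$ have the polynomial in the mean, so \autoref{theorem:inverse_inequality} and \autoref{lemma:inverse_sigma} apply to $v_h,r_h$ verbatim, yielding $J_f(\mathbf u)$ and $J_p(\mathbf u)$ factors.

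The main obstacle is the pair of consistency terms $(\mean{\partialn\varphi},\jump{v_h})_{\Lambda\setminus\Gamma}$ and $\mu_p(\mean{\Drig:\bnabla^2\chi},\jump{\bnabla r_h\otimes\boldsymbol n_\Gamma})_\Sigma$, where the inverse inequality is unavailable. For the fluid term I would write, facet by facet, $|(\mean{\partialn\varphi},\jump{v_h})_e|\le h_e^{1/2}\|\mean{\partialn\varphi}\|_{L^2(e)}\,h_e^{-1/2}\|\jump{v_h}\|_{L^2(e)}$. I would then replace the inverse inequality by the continuous local trace theorem \autoref{theorem:trace} applied to $\bnabla\varphi|_K\in H^1(K)$:
\[
h_e\|\bnabla\varphi\cdot\mathbf n\|^2_{L^2(e)}\le \Ctr\big(\|\bnabla\varphi\|^2_{L^2(K)}+h_K^2\|\bnabla^2\varphi\|^2_{L^2(K)}\big).
\]
Summing with the facet-counting constant $N_f$ and using $h_K\le h\le 1$ (so $h^2\le h$) bounds the sum by $\Ctr N_f(\|\bnabla\varphi\|^2_\Omega+h\sum_K\|\bnabla^2\varphi\|^2_K)$, which is at most $\Ctr N_f\vertiii{\mathbf u}_h^2$. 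Cauchy--Schwarz then gives a bound by $C\vertiii{\mathbf u}_h J_f(v_h)$. The plate term is handled identically: apply \autoref{theorem:trace} on each $E\in\mathcal T_h^\Gamma$ to $\bnabla^2\chi\in H^1(E)$, use $\|\Drig\|_{L^\infty(\Gamma)}$, and the $h\|\bnabla^3\chi\|^2$ contribution is absorbed into $h\mathcal R(\mathbf u)^2$ (with the $\mu_p$ weighting matching the definition of $\mathcal R$).

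For $\mathbf u$ of the mixed form $\mathbf x-\mathbf z_h$, the bound is linear in $\mathbf u$ on each piece. One can either apply the trace argument to the whole (element-wise $H^2$/$H^3$) function, which is what I would do, or split and use the discrete estimate on the polynomial part. Finally, collect all constants as in Step 7. They depend only on $\Ctr$, $C_{\rm inv}$, $C_{\rm inv}^\Sigma$, $N_f$, $N_f^\Sigma$, $C_R$, $\sigma_{1,2}$, $\|\Drig\|_{L^\infty(\Gamma)}$, $\omega$, $\alpha$, $\gamma_K$ and $\mu_p$, so $\widetilde M=\mathcal O(1)$ independent of $h$.
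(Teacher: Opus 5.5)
Your proposal is correct and follows essentially the same route as the paper. You reuse the discrete-continuity bounds for every block except the two mean terms, relying on the stated extension of $R_h$, $S_h$ to broken-regular functions. For $(\mean{\partialn\varphi},\jump{v_h})_{\Lambda\setminus\Gamma}$ and $\mu_p(\mean{\Drig:\bnabla^2\chi},\jump{\bnabla r_h\otimes\boldsymbol n_\Gamma})_\Sigma$, you replace the inverse inequality by the local trace theorem applied to $\bnabla\varphi|_K\in H^1(K)$ and $\bnabla^2\chi|_E\in H^1(E)$, and absorb the higher-derivative part into $h\,\mathcal R(\mathbf u)^2$ using $h\le 1$; this is exactly the paper's argument. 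Your treatment of the transposed symmetry terms, bounding $\jump{\varphi}$ directly by $J_f(\varphi)$ and the gradient jump of $\chi$ by $J_p(\chi)$ from the norm, is if anything slightly cleaner than the paper's appeal to the trace theorem on $\jump{\varphi}$.
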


\begin{proof}
The bounds for $B_{\Omega}$,
$B_{\partial\Omega}$, and the non-mean part of $B_\Gamma$ stated in the proof of \autoref{theorem:discrete_continuity} apply directly for $\varphi,\chi$ in place of $\phi_h,w_h$, with the same constants $1,\bar C_1,\bar C_3,\tilde C_4,\tilde C_5$. It only remains to revisit the mean-flux term in $B_\Lambda$ and the mean-moment term in $B_\Sigma$, where \autoref{theorem:inverse_inequality} and \autoref{lemma:inverse_sigma} were used and require polynomial structure, not necessarily present in $\varphi,\chi$.

\smallskip\noindent\textbf{Mean-flux term.} For $e=\partial K_1\cap\partial K_2\in\Lambda\setminus\Gamma$,
since $\varphi|_{K_i}\in H^2(K_i)$, the function $\partial_n\varphi|_{K_i}$ belongs to $H^1(K_i)$, so the trace
\autoref{theorem:trace} applies directly, giving:
\[
\|\partial_n\varphi\|^2_{L^2(e)} \le C_{\mathrm{tr}}\big(h_e^{-1}\|\bnabla\varphi\|^2_{L^2(K_i)} +
h_e\|\bnabla^2\varphi\|^2_{L^2(K_i)}\big),
\]
using $\bnabla(\partial_n\varphi)\le\|\bnabla^2\varphi\|$ pointwise. Multiplying by $h_e$, using
$h_e\le h\le1$ so $h_e^2\le h$, and averaging over $K_1,K_2$ we have
\[
h_e\|\mean{\partial_n\varphi}\|^2_{L^2(e)} \le \tfrac12C_{\mathrm{tr}}\big(\|\bnabla\varphi\|^2_{L^2(K_1\cup K_2)}
+ h\,\|\bnabla^2\varphi\|^2_{L^2(K_1\cup K_2)}\big).
\]
Summing over $e\in\Lambda\setminus\Gamma$ with each element counted at most $N_f$ times, we obtain
\[
\sum_{e}h_e\|\mean{\partial_n\varphi}\|^2_{L^2(e)} \le \tfrac12 C_{\mathrm{tr}}N_f\big(\|\bnabla\varphi\|^2_\Omega +
h\,\mathcal R_\Omega(\varphi)^2\big), \qquad \mathcal R_\Omega(\varphi)^2:=\sum_K\|\bnabla^2\varphi\|^2_{L^2(K)}.
\]
Applying Cauchy--Schwarz to the term $(\mean{\partial_n\varphi},\jump{v_h})_{\Lambda\setminus\Gamma}$ as in the original proof then gives
\[
\begin{aligned}
|(\mean{\partial_n\varphi},\jump{v_h})_{\Lambda\setminus\Gamma}| \le&\ \big(\tfrac12C_{\mathrm{tr}}N_f\big)^{1/2}
\big(\|\bnabla\varphi\|^2_\Omega+h\,\mathcal R_\Omega(\varphi)^2\big)^{1/2}\Big(\sum_e
h_e^{-1}\|\jump{v_h}\|^2_{L^2(e)}\Big)^{1/2} \\
\le&\ C\big(\|\bnabla\varphi\|_\Omega+h^{1/2}\mathcal
R_\Omega(\varphi)\big)\|\yvec_h\|_{\X_h}.
\end{aligned}
\]
Since $h^{1/2}\mathcal R_\Omega(\varphi) \le h^{1/2}\mathcal R(\mathbf u) \le \vertiii{\mathbf u}_h$ and $\|\bnabla\varphi\|_\Omega \le \|\mathbf u\|_{\X_h} \le \vertiii{\mathbf u}_h$, it follows that
\[
|(\mean{\partial_n\varphi},\jump{v_h})_{\Lambda\setminus\Gamma}|
\le
C\,\vertiii{\mathbf u}_h\,\|\yvec_h\|_{\X_h},
\]
with a constant independent of $h$. 

The symmetric term $(\mean{\partial_n v_h},\jump\varphi)_{\Lambda\setminus\Gamma}$ is bounded
identically, using \autoref{theorem:inverse_inequality} on $v_h$ (polynomial, unchanged from the original
proof) and \autoref{theorem:trace} on $\jump\varphi$ (using $\varphi\in H^2$ elementwise); if $\varphi\in
H^1(\Omega\setminus\Gamma)$ globally this jump vanishes identically (\autoref{ass:regularity}), so this term contributes
only through $\mathbf z_h\in\X_h$ when $\mathbf u=\xvec-\mathbf z_h$.

\smallskip\noindent\textbf{Mean-moment term.} The identical argument applies to $B_\Sigma$: for $p=\partial
E_1\cap\partial E_2\in\Sigma$, $\chi|_{E_i}\in H^3(E_i)$ gives $\Drig:\bnabla^2\chi|_{E_i}\in H^1(E_i)$, so
\autoref{theorem:trace} can be used directly to bound
$\|\mean{\Drig:\bnabla^2\chi}\|_{L^2(p)}$ by $h_p^{-1/2}\|\Drig\|_{L^\infty(\Gamma)}\|\bnabla^2\chi\|_{L^2(E_1\cup
E_2)} + h_p^{1/2}\|\Drig\|_{L^\infty(\Gamma)}\|\bnabla^3\chi\|_{L^2(E_1\cup E_2)}$. Summing over $\Sigma$ as in the proof of \autoref{theorem:discrete_continuity} produces the analogous bound $C_6'(\|\bnabla^2\chi\|_{\Gamma}+h^{1/2}\mathcal R_\Gamma(\chi))\|\yvec_h\|_{\X_h}$, with $\mathcal R_\Gamma(\chi)^2:=\sum_E\|\bnabla^3\chi\|^2_{L^2(E)}$. Since $h^{1/2}\mathcal R_\Gamma(\chi) \le h^{1/2}\mathcal R(\mathbf u) \le \vertiii{\mathbf u}_h$, the resulting estimate also satisfies 
\[
|B_\Sigma(\mathbf u,\yvec_h)|
\le
C\,\vertiii{\mathbf u}_h\,\|\yvec_h\|_{\X_h},
\]
with a mesh-independent constant.

Summing all five blocks via the triangle inequality, the discrete-only blocks contribute an $O(1)$ amount (\autoref{theorem:discrete_continuity}), while
the mean-flux and mean-moment terms satisfy the same
mesh-independent bound as the remaining blocks.
Therefore,
\[
|B(\mathbf u,\yvec_h)| \le \widetilde M\, \vertiii{\mathbf u}_h\, \|\yvec_h\|_{\X_h},
\]
with $\widetilde M=O(1)$ independent of $h$. This completes the proof.
\end{proof}

\subsection{Well-posedness}\label{sec:wellposedness}\noindent
Before stating the well-posedness theorem, we recall that the linear forcing functional generated by the incoming Sommerfeld mode is given by
\begin{equation}
\label{eq:rhs}
L(\mathbf y_h) = \big(-2\upi k_0\,\phi^{\rm in},\,v_h\big)_{\Gamma_{\rm in}}, \qquad \mathbf y_h=[v_h,r_h]\in\X_h.
\end{equation}
Note that $L$ depends on $\mathbf y_h$ only through its fluid component $v_h$ and is supported entirely on
$\Gamma_{\rm in}$, and
$k_0=\omega/c\in\mathbb R$.

\begin{lemma}[Boundedness of the linear functional]\label{lem:rhs_bound}
Let $L$ be given by~\eqref{eq:rhs}. Then $L\in\X_h'$, with $\X_h'$ being the dual space of $\X_h$, and
\begin{equation}
\label{eq:rhs_bound}
|L(\mathbf y_h)| \le C_\ell\,\|\mathbf y_h\|_{\X_h} \qquad\forall\,\mathbf y_h\in\X_h, \qquad
C_\ell := 2|k_0|\,|\beta|\,C_0^{1/2}\widehat C_1,
\end{equation}
with $C_0=\|\psi_0\|^2_{L^2}$ as in \autoref{ass:modal} and $\widehat C_1$ as in \autoref{theorem:discrete_continuity}.
\end{lemma}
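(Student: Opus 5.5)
The plan is to bound $L(\yvec_h)$ in three steps: Cauchy--Schwarz on $\Gamma_{\rm in}$, an explicit computation of $\|\phi^{\rm in}\|_{L^2(\Gamma_{\rm in})}$, and the discrete trace bound $\|v_h\|_{L^2(\partial\Omega)}\le\widehat C_1\|\yvec_h\|_{\X_h}$ already proved in Step~2 of \autoref{theorem:discrete_continuity}. Since $L$ is antilinear/linear in $v_h$ alone and $\X_h$ is finite dimensional, membership $L\in\X_h'$ is immediate once the norm bound holds. What remains is to produce the stated constant.

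\textbf{Step 1 (Cauchy--Schwarz).} Applying Cauchy--Schwarz on $\Gamma_{\rm in}$ gives
\[
|L(\yvec_h)| \le 2|k_0|\,\|\phi^{\rm in}\|_{L^2(\Gamma_{\rm in})}\,\|v_h\|_{L^2(\Gamma_{\rm in})}.
\]

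\textbf{Step 2 (incident-wave norm).} On $\Gamma_{\rm in}$ (the line $x=-L_f$) we have $\phi^{\rm in}=\beta\psi_0(z)e^{-\upi k_0L_f}$. Because $k_0\in\mathbb R$, the exponential has unit modulus, and $\psi_0$ is real-valued. Hence
\[
\|\phi^{\rm in}\|^2_{L^2(\Gamma_{\rm in})}=|\beta|^2\int_{-H}^0\psi_0^2\,\mathrm dz=|\beta|^2C_0
\]
by \eqref{eqn: orthog psi}, that is, by the normalisation in \autoref{ass:modal}. In the three-dimensional channel the same computation runs with $\psi_0\chi_p$ and gives the factor $(C_0B_p)^{1/2}$; I would state this as a remark with the obvious modification of $C_\ell$.

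\textbf{Step 3 (discrete trace).} Since $\Gamma_{\rm in}\subset\partial\Omega$, we have $\|v_h\|_{L^2(\Gamma_{\rm in})}\le\|v_h\|_{L^2(\partial\Omega)}$. Step~2 of \autoref{theorem:discrete_continuity} then gives $\|v_h\|_{L^2(\partial\Omega)}\le\widehat C_1\|\yvec_h\|_{\X_h}$. That bound was obtained by splitting $v_h=R_hv_h+(v_h-R_hv_h)$ and using \autoref{lem:fluid_recovery}, the local trace theorem and the continuous trace inequality, with $h\le1$. I would invoke it directly rather than re-derive it. Combining the three steps yields $|L(\yvec_h)|\le 2|k_0||\beta|C_0^{1/2}\widehat C_1\|\yvec_h\|_{\X_h}$, which is \eqref{eq:rhs_bound}.

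The only point needing care is Step~3. The trace bound must be uniform in $h$ despite $v_h$ being discontinuous across $\Lambda$, which is why we rely on the recovery-operator argument instead of an elementwise trace inequality; the latter would introduce an $h^{-1/2}$ factor. Since that work was already done in \autoref{theorem:discrete_continuity}, I expect no genuine obstacle. The proof should still state that $\widehat C_1$ is $h$-independent, and hence so is $C_\ell$.
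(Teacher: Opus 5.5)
Your proposal is correct and matches the paper's proof step for step. Both arguments use Cauchy--Schwarz on $\Gamma_{\rm in}$, then the identity $\|\phi^{\rm in}\|_{L^2(\Gamma_{\rm in})}=|\beta|\,C_0^{1/2}$ (from the unit-modulus exponential and real $\psi_0$), and finally the Step~2 bound $\|v_h\|_{L^2(\partial\Omega)}\le\widehat C_1\|\yvec_h\|_{\X_h}$ from \autoref{theorem:discrete_continuity}, restricted to $\Gamma_{\rm in}\subset\partial\Omega$. Your extra remarks on the $h$-independence of $C_\ell$ and on the channel case, where the factor becomes $|k_{0p}|\,(C_0B_p)^{1/2}$, are accurate additions rather than departures.
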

\begin{proof}
By Cauchy--Schwarz on $\Gamma_{\rm in}$,
\[
|L(\mathbf y_h)| = 2|k_0|\,\big|(\phi^{\rm in},v_h)_{\Gamma_{\rm in}}\big| \le 2|k_0|\,\|\phi^{\rm
in}\|_{L^2(\Gamma_{\rm in})}\,\|v_h\|_{L^2(\Gamma_{\rm in})}.
\]
Since $|e^{-\upi k_0 L_{\rm dom}}|=1$, $\|\phi^{\rm in}\|_{L^2(\Gamma_{\rm in})} = |\beta|\,\|\psi_0\|_{L^2} =
|\beta|\,C_0^{1/2}$. As $\Gamma_{\rm in}\subset\partial\Omega$, we can reuse the same bound $\|\phi_h\|^2_{L^2(\partial\Omega)}\le\widehat
C_1^2\|\mathbf x_h\|^2_{\mathcal X_h}$ established in the proof of \autoref{theorem:discrete_continuity} (Step 2), applied to $v_h$ restricted to the subset $\Gamma_{\rm in}$ of $\partial\Omega$, giving $\|v_h\|_{L^2(\Gamma_{\rm in})}\le
\widehat C_1\|\mathbf y_h\|_{\mathcal X_h}$. Combining the two bounds gives~\eqref{eq:rhs_bound}.
\end{proof}

In particular, since \[
\|L\|_{\mathcal X_h'} := \sup_{\mathbf y_h\in\mathcal X_h\setminus
\{0\}} |L(\mathbf y_h)|/\|\mathbf y_h\|_{\mathcal X_h},
\]
we have that $\|L\|_{\mathcal X_h'}\le C_\ell$, mesh-independent, a direct consequence of the same trace estimate already used to control $B_{\partial\Omega}$ in \autoref{theorem:discrete_continuity}. This result will be used in the stability bound of \autoref{thm:wellposed} below.

\begin{lemma}[Consistency]\label{lem:consistency}
Let \autoref{ass:regularity} hold for $\xvec=[\phi,w]$. Then
\[
B(\xvec,\yvec_h) = L(\yvec_h) \qquad \forall\,\yvec_h=[v_h,r_h]\in\X_h.
\]
\end{lemma}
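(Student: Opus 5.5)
The plan is to start from the strong form satisfied elementwise by the exact solution (Assumption~\ref{ass:regularity}) and reproduce the derivation of Section~\ref{sec: FE formulation} with a discrete test pair $\yvec_h=[v_h,r_h]\in\X_h$, rather than a smooth one. The identity $B(\xvec,\yvec_h)=L(\yvec_h)$ should follow from elementwise integration by parts, the single-valuedness of fluxes and moments, the vanishing of the exact solution's jumps, and the fact that the radiation operator $K$ reproduces the true Neumann data.

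\textbf{Fluid block.} I would multiply $-\Delta\phi=0$ by $\overline{v_h}$ on each $K\in\mathcal T_h$ and integrate by parts; this is legitimate because $\phi|_K\in H^2(K)$ and $v_h|_K$ is a polynomial. Summing over $K$, I would use the identity $\jump{ab}=\jump a\mean b+\mean a\jump b$ on $\Lambda\setminus\Gamma$. Since $\partial_n\phi$ is single-valued there, $\jump{\partial_n\phi}=0$ and only $-(\mean{\partial_n\phi},\jump{v_h})_{\Lambda\setminus\Gamma}$ survives. On $\Gamma$, the two one-sided traces combine, via the kinematic condition \eqref{eqn: CS Gamma}, into $(\upi\omega w,\jump{v_h})_\Gamma$.

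The symmetry term $(\mean{\partial_n v_h},\jump\phi)_{\Lambda\setminus\Gamma}$ and the penalty term $(\mu_1\jump\phi,\jump{v_h})_{\Lambda\setminus\Gamma}$ both vanish because $\jump\phi=0$ on $\Lambda\setminus\Gamma$. On $\Gamma_{\rm b}$ and $\Gamma_{\rm side}$ the Neumann data vanish, and on $\Gamma_{\rm fs}$ the Robin condition \eqref{eqn: CS Gamma_fs} gives $-(\alpha\phi,v_h)_{\Gamma_{\rm fs}}$.

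\textbf{Radiation boundaries.} On $\Gamma_{\rm in}\cup\Gamma_{\rm out}$ I would use the exact Neumann data obtained from the eigenfunction expansion, i.e.\ the DtN identities derived from \eqref{eqn: reflection 2d}--\eqref{eqn: transmission 2d} (or \eqref{eqn: Rij 3d}--\eqref{eqn: Tij 3d} in the channel). Testing these against $v_h$ gives exactly $-K(\phi,v_h)-L(\yvec_h)$ on moving to the left-hand side.

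This step is the main obstacle, because the continuous problem includes a truncation at $M$ (and $N$). I would interpret "the continuous variational problem underlying \eqref{eq:bilinear_form_analysis}" in Assumption~\ref{ass:regularity} as the one posed with the truncated $K$. Then $\partial_n\phi$ on $\Gamma_{\rm in}\cup\Gamma_{\rm out}$ equals the truncated DtN map plus, on $\Gamma_{\rm in}$, the incident term. Equivalently, $\phi$ restricted to the radiation boundaries lies in the span of $\{\psi_m\}_{m\le M}$, as for evanescent modes beyond $M$ that are assumed absent. I would state this explicitly, since otherwise only an $O(\text{truncation})$ consistency error holds.

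\textbf{Plate block.} I would multiply \eqref{eqn: CS Gamma plate 2d} by $\overline{r_h}$ and integrate by parts twice on each $E\in\mathcal T_h^\Gamma$, using $w|_E\in H^3(E)$. Single-valuedness of $\Drig:\bnabla^2 w$ and of the shear, together with continuity of $r_h\in\mathcal C^0$, kills the jump-of-moment and jump-of-shear terms. This leaves $-(\mean{\Drig:\bnabla^2w},\jump{\bnabla r_h\otimes\boldsymbol n_\Gamma})_\Sigma$. The free conditions \eqref{eqn:free 3d} remove the contributions on $\partial\Gamma$.

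The added symmetric and penalty terms on $\Sigma$ vanish because $\jump{\bnabla w\otimes\boldsymbol n_\Gamma}=0$ for $w\in C^1(\Gamma)$. The mass and pressure terms appear unchanged.

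\textbf{Conclusion.} Multiplying the plate identity by $\mu_p$ and adding it to the fluid identity reproduces \eqref{eq:bilinear_form_analysis} evaluated at $(\xvec,\yvec_h)$ and equated to $L(\yvec_h)$, which completes the argument.
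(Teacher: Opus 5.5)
Your proposal is correct and follows essentially the same route as the paper: block-by-block elementwise integration by parts against the discrete test pair, using single-valued fluxes, moments and shears, the vanishing jumps $\jump{\phi}$ and $\jump{\bnabla w\otimes\boldsymbol{n}_\Gamma}$, and the truncated DtN data on $\Gamma_{\rm in}\cup\Gamma_{\rm out}$ to produce $-K(\phi,v_h)-L(\yvec_h)$. Your explicit reading of the continuous problem as the one with truncated $K$ is what the paper uses tacitly when it invokes ``strong Robin/Sommerfeld conditions''. One correction: your ``equivalently, $\phi$ lies in $\mathrm{span}\{\psi_m\}_{m\le M}$'' is not an equivalent reformulation. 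It is a separate sufficient condition for the untruncated solution, because the truncated natural boundary condition constrains $\partial_{\boldsymbol{n}}\phi$, not the trace of $\phi$.
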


\begin{proof}
We verify the identity block by block, following the decomposition of $B$ used in the proof of \autoref{theorem:discrete_continuity}. 

\smallskip\noindent\textbf{Fluid terms.} Since $\phi\in H^1(\Omega\setminus\Gamma)$ globally
(\autoref{ass:regularity}), $\jump\phi=0$ on every $e\in\Lambda\setminus\Gamma$, so the penalty term
$(\mu_1\jump\phi,\jump{v_h})_{\Lambda\setminus\Gamma}=0$, i.e. vanishes identically. Elementwise integration by parts on
each $K\in\mathcal T_h$ gives
\[
(\bnabla\phi,\bnabla v_h)_K = -(\Delta\phi,v_h)_K + (\partial_n\phi,v_h)_{\partial K},
\]
noting that $\phi$ solves the strong interior equation on $K$ (Section~\ref{sec: analysis}), i.e. $-\Delta\phi=0$. Summing over $K\in\mathcal T_h$, we have that
\[
\sum_{K}(\partial_n\phi,v_h)_{\partial K} = -(\mean{\partial_n\phi},\jump{v_h})
_{\Lambda\setminus\Gamma} + (\upi\omega w,\jump{v_h})_\Gamma - (\alpha
\phi,v_h)_{\Gamma_{\rm fs}} - K(\phi,v_h),
\]
where we have used the strong fluid--structure interface condition relating $\partial_n\phi$ on $\Gamma$ to $w$ and the strong Robin/Sommerfeld boundary conditions on $\Gamma_{\rm fs}$, $\Gamma_{\rm in}\cup\Gamma_{\rm out}$. Combined with $(\bnabla\phi,\bnabla v_h)_\Omega$, this reproduces exactly the first line of~\eqref{eq:bilinear_form_analysis} together with the $\Lambda\setminus\Gamma$ and $\Gamma$ coupling terms of the second line, with no residual.

\smallskip\noindent\textbf{Structure terms.} Since $\bnabla w$ is continuous across $\Sigma$
(\autoref{ass:regularity}), $\jump{\bnabla w\otimes\boldsymbol n_\Gamma}=0$ on every $p\in\Sigma$, so the
penalty term $(\mu_2\jump{\bnabla r_h\otimes\boldsymbol n_\Gamma},\jump{\bnabla w\otimes\boldsymbol
n_\Gamma})_\Sigma$ vanishes identically. Elementwise integration by parts (twice, by parts on each
$E\in\mathcal T_h^\Gamma$) applied to $(\Drig:\bnabla^2w,\bnabla^2r_h)_E$, using that $w$ solves the strong plate
equation on $E$ and that $\Drig:\bnabla^2w$ is single-valued across every $p\in\Sigma$
(\autoref{ass:regularity}), reduces the sum over $E\in\mathcal T_h^\Gamma$ exactly to $(I_0\omega^2
w,r_h)_\Gamma + (\upi\omega\rho_w\jump\phi,r_h)_\Gamma - (\mean{\Drig:\bnabla^2w},\jump{\bnabla
r_h\otimes\boldsymbol n_\Gamma})_\Sigma$ plus boundary terms on $\partial\Gamma$ that vanish under the assumed
free-plate boundary conditions (equation~\eqref{eqn: Bp f/s}).

Multiplying by $\mu_p$ reproduces exactly the bracketed line of~\eqref{eq:bilinear_form_analysis} (excluding the
$(\mean{\Drig:\bnabla^2r_h},\jump{\bnabla w\otimes\boldsymbol n_\Gamma})_\Sigma$ term, which vanishes identically
since $\jump{\bnabla w\otimes\boldsymbol n_\Gamma}=0$, and the $\mu_2$-penalty term, which vanishes for the same
reason).

Summing all blocks gives $B(\xvec,\yvec_h)=L(\yvec_h)$ for every $\yvec_h\in\X_h$, as claimed.
\end{proof}

\begin{lemma}[Radiation identity]\label{lem:radiation_identity}
For $\xvec=(\phi,w)\in \X$ solving the homogeneous continuous problem, $B(\xvec,\xvec)=0$, at $\mu_p=1/\rho_w$. Then
\[
\operatorname{Im}B(\xvec,\xvec) = -\sum_{m\in\mathcal P} k_m\big(|R_m|^2 C_m + |T_m|^2 C_m\big),
\]
where $\mathcal P$ is the set of propagating modes, with $k_m\in\mathbb R$.
\end{lemma}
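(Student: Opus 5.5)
The plan is to evaluate $B(\xvec,\xvec)$ directly for a continuous $\xvec=[\phi,w]\in\X$, for which all jump terms on $\Lambda\setminus\Gamma$ and $\Sigma$ vanish, and to isolate its imaginary part term by term. With $\mu_p=1/\rho_w$,
\[
B(\xvec,\xvec)=\|\bnabla\phi\|^2_\Omega-\alpha\|\phi\|^2_{\Gamma_{\rm fs}}-K(\phi,\phi)+(\upi\omega w,\jump{\phi})_\Gamma-(\upi\omega\jump{\phi},w)_\Gamma+\rho_w^{-1}\big[(\Drig:\bnabla^2w,\bnabla^2w)_\Gamma-I_0\omega^2\|w\|^2_\Gamma\big].
\]
The gradient term, the free-surface term ($\alpha>0$ real) and the mass term are real. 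The rigidity term is real when $\Drig$ is real and symmetric, which is the setting assumed here; a complex $D$ would add a dissipative imaginary part, and I would exclude it by hypothesis.

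The two coupling terms have the form $z-\overline{z}$ with $z=(\upi\omega w,\jump{\phi})_\Gamma$, since $\overline{(\upi\omega w,\jump\phi)_\Gamma}=-\upi\omega(\jump\phi,w)_\Gamma$ for real $\omega$. Their sum is therefore $2\upi\,\operatorname{Im}z$, which is purely imaginary. This means the coupling does contribute to $\operatorname{Im}B$. I expect this to be the main obstacle: the identity has to absorb this term. I would first try to show that it cancels against the energy flux through $\Gamma$ by using the homogeneous equations themselves. Testing with $\yvec=[0,w]$ and with $\yvec=[\phi,0]$ separately gives two scalar identities. The plate equation then expresses $(\upi\omega\rho_w\jump\phi,w)_\Gamma$ through real quantities, namely $\rho_w^{-1}$ times the rigidity and mass terms, so that $\operatorname{Im}(w,\jump\phi)_\Gamma=0$ at a solution. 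The fluid identity then reduces $\operatorname{Im}$ of the fluid block to $\operatorname{Im}(-K(\phi,\phi))$. If this cancellation fails as stated, the fallback is to read the lemma as the imaginary part of the fluid block alone, computed with the plate equation substituted.

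The remaining term is the radiation operator. Expanding $K(\phi,\phi)$ gives
\[
K(\phi,\phi)=\sum_{m=0}^M\frac{\upi k_m}{C_m}\big(|(\phi,\psi_m)_{\Gamma_{\rm in}}|^2+|(\phi,\psi_m)_{\Gamma_{\rm out}}|^2\big),
\]
so $\operatorname{Im}(-K)=-\sum_m\operatorname{Re}(k_m)C_m^{-1}(\cdots)$. For evanescent modes $k_m$ is purely imaginary, so $\operatorname{Re}k_m=0$ and those modes drop out. Only $m\in\mathcal P$ survive, and for these $k_m$ is real. Finally, I would substitute \eqref{eqn: reflection 2d} and \eqref{eqn: transmission 2d} (in the homogeneous case $\beta=0$): $(\phi,\psi_m)_{\Gamma_{\rm in}}=C_m e^{\upi k_m L_f}R_m$ and $(\phi,\psi_m)_{\Gamma_{\rm out}}=C_m e^{\upi k_m L_f}T_m$, whose moduli are $C_m|R_m|$ and $C_m|T_m|$ because $k_m$ is real. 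This gives $-\sum_{m\in\mathcal P}k_m(|R_m|^2C_m+|T_m|^2C_m)$, as claimed. The 3D channel case follows the same steps with $\psi_m\chi_n$, $C_mB_n$ and $k_{mn}$ in place of $\psi_m$, $C_m$ and $k_m$.
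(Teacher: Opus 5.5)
Your overall route is the paper's: for $\xvec\in\X$, every term of $B(\xvec,\xvec)$ except $-K(\phi,\phi)$ is real. Evanescent modes drop out because $\operatorname{Re}k_m=0$, and the propagating coefficients are rewritten via \eqref{eqn: reflection 2d}--\eqref{eqn: transmission 2d} with $\beta=0$. Your caveat that $\Drig$ must be real and symmetric is correct, and it is tacit in the paper's ``all elastic-plate terms are real.''

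However, the step you single out as the main obstacle rests on a sign error. You correctly note that $\overline{z}=-\upi\omega(\jump{\phi},w)_\Gamma$ for $z=(\upi\omega w,\jump{\phi})_\Gamma$. But the second coupling term in $B(\xvec,\xvec)$ is $-(\upi\omega\jump{\phi},w)_\Gamma=-\upi\omega(\jump{\phi},w)_\Gamma$, which equals $+\overline{z}$, not $-\overline{z}$. Hence the coupling equals $z+\overline{z}=2\operatorname{Re}z=-2\omega\operatorname{Im}(w,\jump{\phi})_\Gamma\in\mathbb{R}$ for \emph{every} $\xvec\in\X$, as in Step~3 of the proof of Theorem~\ref{thm:garding}. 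This is exactly what the choice $\mu_p=1/\rho_w$ buys. So there is no obstacle: $\operatorname{Im}B(\xvec,\xvec)=-\operatorname{Im}K(\phi,\phi)$ holds on all of $\X$ without using the equations. The homogeneous problem is needed only afterwards, to conclude that this quantity vanishes.

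Your workaround also misstates what the plate equation yields. Testing with $\yvec=[0,w]$ gives $\upi\omega\rho_w(\jump{\phi},w)_\Gamma=(\Drig:\bnabla^2w,\bnabla^2w)_\Gamma-I_0\omega^2\|w\|^2_\Gamma=:E\in\mathbb{R}$. Therefore $z=-E/\rho_w$ is real and $\operatorname{Re}(w,\jump{\phi})_\Gamma=0$. By contrast, $\operatorname{Im}(w,\jump{\phi})_\Gamma=E/(\omega\rho_w)$, which is generally nonzero, contrary to your claim.

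Since $z\in\mathbb{R}$ makes the imaginary part of both $z-\overline{z}$ and $z+\overline{z}$ vanish, your final formula survives, but only by accident. The coupling is not purely imaginary, and it does not vanish at a solution (it equals $-2E/\rho_w$). No fallback reading of the lemma is needed either. Correct the sign and drop the detour; the argument then coincides with the paper's.
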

\begin{proof}
The fluid--plate coupling terms $(i\omega w,\jump\phi)_\Gamma$ and
$\mu_p(i\omega\rho_w\jump\phi,w)_\Gamma$ are complex conjugates at
$\mu_p=1/\rho_w$ and cancel exactly in $\operatorname{Im}B(\xvec,\xvec)$ (see the proof of the Gårding inequality~\ref{thm:garding}). Moreover, all elastic-plate terms are real. 

Therefore, the only imaginary contribution is $\operatorname{Im}K(\phi,\phi)=\sum_{m\in\mathcal P}k_m(|R_m|^2C_m+|T_m|^2C_m)$. Since $\operatorname{Im}(\upi k_{mn})=\kappa_{mn}$ and by \autoref{ass:modal}
evanescent modes do not contribute to $\operatorname{Im}K(\phi,\phi)$, we have that
\[
\operatorname{Im}K(\phi,\phi)=\sum_{(m,n)\in\mathcal P}\frac{\kappa_{mn}}{C_mB_n}
\Big(\big|(\phi,\psi_m\chi_n)_{\Gamma_{\rm in}}\big|^2
+\big|(\phi,\psi_m\chi_n)_{\Gamma_{\rm out}}\big|^2\Big),
\]
the sum restricted to the propagating set $\mathcal P:=\{(m,n):k_{mn}\in\mathbb R\}$,
where $\kappa_{mn}=k_{mn}$.

\end{proof}

\begin{proposition}[Reduction of injectivity]\label{prop:reduction}
If the homogeneous continuous problem admits a nontrivial solution $\xvec$, then $R_m=T_m=0$ for every propagating mode (\autoref{lem:radiation_identity}), so $\xvec$ carries no radiated energy. Therefore, by unique continuation of the Laplace equation, $\phi\equiv 0$ on $\Omega\setminus\Gamma$, the solution reduces to an eigenfunction of the compactly-supported interior hydroelastic operator. Consequently, loss of uniqueness can occur only at isolated interior resonance frequencies.
\end{proposition}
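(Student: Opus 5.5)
The plan is to get the vanishing of the propagating coefficients from an energy (imaginary-part) argument, and the "isolated frequencies" conclusion from analytic Fredholm theory rather than from unique continuation alone.

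\textbf{Step 1: vanishing of the propagating coefficients.} Let $\xvec=[\phi,w]\in\X$ be a nontrivial solution of the homogeneous problem, with $\mu_p=1/\rho_w$. Testing with $\yvec=\xvec$ gives $B(\xvec,\xvec)=0$, so in particular $\operatorname{Im}B(\xvec,\xvec)=0$. By \autoref{lem:radiation_identity},
\[
0=\operatorname{Im}B(\xvec,\xvec)=-\sum_{m\in\mathcal P}k_m\big(|R_m|^2+|T_m|^2\big)C_m .
\]
For propagating modes we have $k_m>0$ (\autoref{ass:modal} excludes the cut-off case $k^2=\lambda_m$), and $C_m>0$. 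Every summand is therefore nonnegative, so each one vanishes: $R_m=T_m=0$ for all $m\in\mathcal P$, as in the statement. Through \eqref{eqn: reflection 2d}--\eqref{eqn: transmission 2d} (or \eqref{eqn: Rij 3d}--\eqref{eqn: Tij 3d}), this says that the traces of $\phi$ on $\Gamma_{\rm in}$ and $\Gamma_{\rm out}$ are orthogonal to every propagating $\psi_m$ (respectively $\psi_m\chi_n$).

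\textbf{Step 2: reduction to a trapped field.} Using the DtN structure of $K$, I would extend $\phi$ to the semi-infinite strips $|x|>L_f$ by the eigenfunction expansion \eqref{eqn: phi solution outside}. Since only evanescent modes ($\operatorname{Im}k_m>0$) survive, the extended field decays exponentially, and the extension is consistent because the Neumann data on $\Gamma_{\rm in}\cup\Gamma_{\rm out}$ are exactly those produced by $K$.

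The extended $\phi$ is then harmonic in the unbounded strip minus $\Gamma$, satisfies the Robin free-surface condition and the Neumann bottom condition, and lies in $H^1$. For the truncated operator this holds modulo the modes above $M$; those are themselves evanescent, so the argument is unchanged. Hence $\xvec$ is a finite-energy (trapped) mode of the coupled hydroelastic operator.

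Where the fluid region adjacent to $\Gamma_{\rm in}$ is free of the plate, the Cauchy data of $\phi$ are controlled there. Unique continuation for Laplace's equation, together with the Rellich-type argument in the strip, then propagates any vanishing into the component of $\Omega\setminus\Gamma$ containing the far field. Whatever survives is a nonzero eigenfunction of the self-adjoint, compactly supported interior hydroelastic problem. The kinematic condition \eqref{eqn: CS Gamma} ties $w$ to $\partialn\phi$ on $\Gamma$, so in that case $w$ is nonzero as well.

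\textbf{Step 3: isolated frequencies.} Here I would not rely on unique continuation. Instead, I would use that the operator $\mathcal A(\omega):\X\to\X'$ induced by $B$ satisfies a continuous G\aa rding inequality. This is the $h$-independent analogue of \autoref{thm:garding}, in which the jump terms vanish, combined with the compact embeddings used in \autoref{lem:DGcompactness}. It follows that $\mathcal A(\omega)$ is Fredholm of index zero.

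Its coefficients $\alpha=\omega^2/g$, $k_m(\omega)$, $I_0\omega^2$ and $\upi\omega\rho_w$ are analytic in $\omega$ on a connected neighbourhood of the positive real axis that avoids the cut-offs. The analytic Fredholm theorem then gives that $\mathcal A(\omega)$ is either nowhere invertible or invertible except on a discrete set. To exclude the first alternative, it suffices to exhibit one $\omega$ with $\mathcal A(\omega)$ injective. I would take $\omega$ with a small positive imaginary part, or $\omega\to0$, where the $I_0\omega^2$ and $\alpha$ terms are dominated and the coercive part of $\operatorname{Re}B$ forces $\xvec=0$.

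\textbf{Main obstacle.} The difficult step is Step 2's claim that $\phi\equiv0$. Trapped modes for submerged bodies are known to exist, so a purely evanescent field need not vanish identically. The honest conclusion is that $\xvec$ is a trapped mode. I would therefore state the result in the form ``nontrivial kernel $\Rightarrow$ trapped/interior eigenmode'', and let Step 3 supply the isolation of such frequencies. A second point to check is that the truncation at $M$ does not create spurious kernel elements. I expect this to follow because the discarded modes all have $\operatorname{Im}k_m>0$, so dropping them only removes a sign-definite contribution to $\operatorname{Re}(-K)$.
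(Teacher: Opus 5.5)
Your Step 1 is exactly the paper's argument. After it, the paper invokes unique continuation to get $\phi\equiv0$ and reads off isolation from the discreteness of an ``interior'' spectrum. You are right not to follow it.

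Vanishing of the finitely many propagating moments of the traces on $\Gamma_{\rm in}\cup\Gamma_{\rm out}$ makes no Cauchy data vanish on any open set, so unique continuation has nothing to start from. Taken literally, $\phi\equiv0$ also contradicts the hypothesis: testing with $r=0$ leaves $(\upi\omega\w,\jump{v})_\Gamma=0$ for all $v$, so $\w=0$ for $\omega\neq0$ and $\xvec=0$. The remark after the proposition in effect concedes that trapped modes may exist.

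So ``nontrivial kernel $\Rightarrow$ non-radiating mode'' is what can actually be proved. But you are then proving a corrected statement, not the one as worded. The sentence in your Step 2 about unique continuation ``propagating any vanishing'' should be deleted, since there is no vanishing to propagate.

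Your extension to the semi-infinite strips is also inconsistent for the truncated operator. The natural boundary condition sets the $\psi_m$-components of $\partialn\phi$ to zero for $m>M$. The decaying continuation of a Dirichlet component $c_m\psi_m$ instead has normal derivative $\upi k_m c_m\psi_m\neq0$, so the glued field has a Neumann jump at $x=\pm L_f$. Stay on the bounded domain: $\xvec$ is a kernel element of the problem in which $K$ is replaced by its evanescent part, which is Hermitian and sign-definite.

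The genuine gap is in Step 3, which is what carries ``isolated''. Replacing the paper's interior-spectrum argument by the analytic Fredholm theorem is a good idea, since it needs no description of the kernel. It does, however, require one frequency in the connected domain of analyticity at which $\mathcal A(\omega)$ is injective, and neither of your candidates works as described.

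At $\omega=0$ the $\alpha$-term, the $I_0\omega^2$-term, the coupling terms and the $k_0=0$ mode of $K$ all vanish. The remaining modes of $K$ are orthogonal to constants. Hence $\mathcal A(0)$ annihilates $[\phi,\w]=[1,0]$ and, for a free plate, every $[0,\w]$ with $\w$ affine. The ``coercive part'' of $\operatorname{Re}B$ is only the seminorm $\|\bnabla\phi\|^2+\mu_p\|\bnabla^2\w\|^2$, so it forces nothing, and $\omega\to0$ is a degenerate point rather than a regular one.

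For $\operatorname{Im}\omega>0$ the coupling no longer drops out of $\operatorname{Im}B(\xvec,\xvec)$; it contributes $-2\operatorname{Im}(\omega)\operatorname{Im}(\w,\jump{\phi})_\Gamma$. You would have to use $\operatorname{Im}(\bar\omega B(\xvec,\xvec))$ instead. There the coupling is real, and the fluid and plate terms give $-\operatorname{Im}(\omega)$ times a positive energy. However, the analytically continued $K$ has complex $k_m$, $\psi_m$, $C_m$ and needs the bilinear pairing $\int\phi\,\psi_m$ to stay analytic. It is then no longer a weighted sum of squared moduli, so its sign must still be controlled.

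Until such a regular point is exhibited, the analytic Fredholm theorem leaves open that $\mathcal A(\omega)$ is singular for every $\omega$, and isolation does not follow.
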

\begin{remark}
This does not rule out the absence of such resonances. Trapped modes are known to occur for related floating-elastic-plate geometries at isolated frequencies. Nonetheless, Proposition~\ref{prop:reduction} narrows \autoref{ass:continuous_nonresonance} from an open-ended statement about the full exterior problem to a finite-dimensional interior eigenvalue condition, which can be verified numerically. In the following well-posedness theorem, this condition is assumed to be satisfied.
\end{remark}

\begin{theorem}[Discrete well-posedness]\label{thm:wellposed}
Let $\mathcal T_h$, $\X_h$, and $B(\cdot,\cdot)$ be the triangulation, finite element space, and bilinear form as in
Theorems~\ref{theorem:discrete_continuity} and~\ref{thm:garding}, let $h\le h^{\ast\ast}$ so that the $h$-independent constants $C_1>0$ and $C_2$ of \autoref{thm:garding} hold, let $L$ be the excitation linear functional~\eqref{eq:rhs}, and let Assumptions~\ref{ass:rigidity},~\ref{ass:modal}, and \ref{ass:continuous_nonresonance} hold. Then there exists $h_0\in(0,h^{\ast\ast}]$ and a constant $\gamma>0$, both independent of $h$, such that for every $h<h_0$:
\begin{enumerate}
\item[\rm(i)] \textbf{(Discrete inf-sup condition.)}
\begin{equation}
\label{eq:infsup}
\inf_{\xvec_h\in\X_h\setminus\{0\}} \sup_{\yvec_h\in\X_h\setminus\{0\}}\frac{|B(\xvec_h,\yvec_h)|}{\|\xvec_h\|_{\X_h}\|\yvec_h\|_{\X_h}} \ge \gamma.
\end{equation}
\item[\rm(ii)] \textbf{(Existence, uniqueness, and stability.)} The discrete variational problem: \textit{Find }$\xvec_h\in\X_h$ \textit{such that}
\begin{equation}
\label{eq:discrete_problem}
B(\xvec_h,\yvec_h)=L(\yvec_h)\quad\forall\,\yvec_h\in\X_h,
\end{equation}
has a unique solution, which satisfies the stability bound
$\|\xvec_h\|_{\X_h}\le\gamma^{-1}\|L\|_{\X_h'}\le C_\ell\,\gamma^{-1}$, with $C_\ell$ as defined in \autoref{lem:rhs_bound}.
\item[\rm(iii)] \textbf{(Quasi-optimality.)} For every $\yvec_h\in\X_h$,
\begin{equation}
\label{eq:cea}
\|\xvec_h-\yvec_h\|_{\X_h} \le \Big(1+\frac{M}{\gamma}\Big)\inf_{\mathbf z_h\in\X_h}\|\xvec_h-\mathbf z_h\|_{\X_h}.
\end{equation}
Furthermore, under \autoref{ass:regularity}, by \autoref{lem:consistency} (consistency) and \autoref{lem:cont_ext} (extended continuity), the Galerkin solution $\xvec_h$  of~\eqref{eq:discrete_problem} satisfies the quasi-optimal error bound
\begin{equation}
\label{eq:cea_final}
\|\xvec-\xvec_h\|_{\X_h}\le\big(1+\widetilde M/\gamma\big)\inf_{\mathbf z_h\in\X_h}\vertiii{\xvec-\mathbf z_h}_h,
\end{equation}
with $\widetilde M$ and $\vertiii{\cdot}_h$ as defined in \autoref{lem:cont_ext}.
\end{enumerate}
\end{theorem}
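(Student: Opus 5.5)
The plan is the classical Schatz-type argument: a G\aa rding inequality plus uniqueness of the continuous problem gives a discrete inf-sup condition, which we prove by contradiction using the compactness of \autoref{lem:DGcompactness}.

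\textbf{Step (i): inf-sup by contradiction.} Suppose (i) fails. Then there are $h_j\to0$ and $\xvec_j=[\phi_j,w_j]\in\X_{h_j}$ with $\|\xvec_j\|_{\X_{h_j}}=1$ and
\[
\sup_{\yvec_h}\frac{|B(\xvec_j,\yvec_h)|}{\|\yvec_h\|_{\X_{h_j}}}\to0 .
\]
Testing with $\yvec_h=\xvec_j$ gives $\mathrm{Re}\,B(\xvec_j,\xvec_j)\to0$. By \autoref{thm:garding},
\[
C_1\big(\|\bnabla\phi_j\|^2+J_f^2+\mu_p\|\bnabla^2w_j\|^2+\mu_pJ_p^2\big)\le C_2\big(\|\phi_j\|^2_\Omega+\mu_p\|w_j\|^2_\Gamma\big)+o(1).
\]
The left-hand quantities are bounded by the normalisation, so \autoref{lem:DGcompactness} applies. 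Along a subsequence, $\phi_j\to\phi$ in $L^2(\Omega)$ and $R_h\phi_j\rightharpoonup\phi$ in $H^1(\Omega\setminus\Gamma)$. Similarly $S_hw_j\rightharpoonup w$ in $H^2(\Gamma)$ and $S_hw_j\to w$ in $H^1(\Gamma)$, so $w_j\to w$ in $L^2(\Gamma)$ as well; this uses a Poincaré-type control of $w_j-S_hw_j$ from \autoref{lem:plate_recovery}.

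\textbf{Identifying the limit.} Fix a smooth $\yvec=[v,r]$, with $v$ smooth on each side of $\Gamma$ and $r\in C^\infty(\overline\Gamma)$. Take $\yvec_h=I_h\yvec$, its Lagrange interpolant. Then $\|I_h\yvec\|_{\X_h}$ stays bounded, so $B(\xvec_j,I_h\yvec)\to0$. Next pass to the limit $B(\xvec_j,I_h\yvec)\to B(\xvec,\yvec)$, term by term:
\begin{itemize}
\item The volume and Hessian terms converge by weak convergence of the recovered functions plus the $O(1)$ recovery errors tested against interpolants.
\item In the skeleton terms $(\mean{\partial_n\phi_j},\jump{I_hv})$ and $(\mean{\partial_n I_hv},\jump{\phi_j})$, $\jump{I_hv}=0$ on $\Lambda\setminus\Gamma$. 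The symmetric term is bounded by $\|\bnabla I_h v\|_{L^\infty}\,h^{1/2}J_f(\phi_j)$, which vanishes. The $\Sigma$ terms are treated analogously.
\item The boundary and coupling traces converge by compactness of the trace from $H^1(\Omega^\pm)$ into $L^2$. This is combined with the remainder estimate $\|\phi_j-R_h\phi_j\|_{L^2(\partial\Omega\cup\Gamma)}=O(h^{1/2})$ from Step~2 of \autoref{theorem:discrete_continuity}.
\item $K$ is finite rank, so it passes to the limit directly.
\end{itemize}
Density then yields $B(\xvec,\yvec)=0$ for all $\yvec\in\X$. \autoref{ass:continuous_nonresonance} forces $\xvec=0$, so $\|\phi_j\|_\Omega+\mu_p\|w_j\|_\Gamma\to0$. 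The G\aa rding inequality then makes the remaining parts of $\|\xvec_j\|_{\X_{h_j}}$ tend to zero too, contradicting $\|\xvec_j\|_{\X_{h_j}}=1$. This gives $h_0$ and $\gamma$.

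\textbf{Step (ii).} The problem is a square finite-dimensional linear system. By (i) the homogeneous problem has only the trivial solution, so a solution exists and is unique. The inf-sup bound with $\yvec_h$ attaining the supremum gives $\gamma\|\xvec_h\|\le\|L\|_{\X_h'}$. \autoref{lem:rhs_bound} then gives the constant $C_\ell\gamma^{-1}$.

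\textbf{Step (iii).} This is the standard Céa/Strang argument. For $\mathbf z_h\in\X_h$, apply (i) to $\xvec_h-\mathbf z_h$. With consistency (\autoref{lem:consistency}) we get Galerkin orthogonality $B(\xvec-\xvec_h,\yvec_h)=0$, hence
\[
\gamma\|\xvec_h-\mathbf z_h\|\le\sup\frac{|B(\xvec-\mathbf z_h,\yvec_h)|}{\|\yvec_h\|}\le\widetilde M\vertiii{\xvec-\mathbf z_h}_h ,
\]
using \autoref{lem:cont_ext}. The triangle inequality with $\|\cdot\|_{\X_h}\le\vertiii{\cdot}_h$ gives \eqref{eq:cea_final}. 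The purely discrete version \eqref{eq:cea} follows in the same way using $M$ from \autoref{theorem:discrete_continuity}.

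\textbf{Main obstacle.} The hardest step is the limit passage in the proof of (i): the DG consistency terms, the traces on $\Gamma$ of functions that only converge weakly, and identifying the limit of $\jump{\phi_j}$ on $\Gamma$. I would handle all three systematically through the recovery operators $R_h$ and $S_h$, with $O(h^{1/2})$ remainder estimates. The point to be careful about is that the limiting form is exactly the continuous $B$ whose injectivity \autoref{ass:continuous_nonresonance} assumes.
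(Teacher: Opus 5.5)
Your architecture matches the paper's. Part (i) is a Schatz-type contradiction argument built from Theorem~\ref{thm:garding}, Lemma~\ref{lem:DGcompactness} and Assumption~\ref{ass:continuous_nonresonance}; part (ii) is the square-system argument; part (iii) is the C\'ea/Strang argument through Lemmas~\ref{lem:consistency} and~\ref{lem:cont_ext}.

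\textbf{The gap.} It lies in the limit passage, where you treat the volume term and the symmetric consistency term separately. First, your bound $|(\mean{\partial_n I_hv},\jump{\phi_j})_{\Lambda\setminus\Gamma}|\lesssim\|\bnabla I_hv\|_{L^\infty}h^{1/2}J_f(\phi_j)$ is off by $h^{1/2}$. Cauchy--Schwarz produces the factor $(\sum_e|e|\,h_e)^{1/2}$, and $\sum_e|e|\,h_e\simeq|\Omega|$, so the term is only $O(J_f(\phi_j))=O(1)$. Second, Lemma~\ref{lem:fluid_recovery} makes $\bnabla_h(\phi_j-R_h\phi_j)$ bounded, not small, so $(\bnabla_h\phi_j,\bnabla I_hv)_\Omega$ need not tend to $(\bnabla\phi,\bnabla v)_\Omega$.

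A one-dimensional caricature shows both failures. Take piecewise constants (admissible, since $\mathbb P_0\subset\mathbb P_r$) with jumps of size $h$ approximating $\phi(x)=x$ on $(0,1)$. Then $\bnabla_h\phi_j=0$ and $J_f(\phi_j)\approx1$. The volume term is $0$, while the symmetric-term contribution to $B$ tends to $v(1)-v(0)=(\phi',v')$. Neither term has the correct limit on its own; only their sum does. The plate pair $(\Drig:\bnabla^2 w_j,\bnabla^2 I_hr)_\Gamma$ and $(\mean{\Drig:\bnabla^2 I_hr},\jump{\bnabla w_j\otimes\bn_\Gamma})_\Sigma$ behaves the same way. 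The paper's items (a) and (c) rest on the same two unjustified claims: $\bnabla(\phi_{h_n}-R_h\phi_{h_n})\to0$, and that a strongly convergent factor times a bounded $J_f$ vanishes. So here you reproduce the paper's gap rather than diverge from it.

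\textbf{The fix.} Pass to the limit in each pair jointly. First replace $I_hv$ by $v$ in both terms. The errors are bounded by $\|\bnabla_h\phi_j\|\,\|\bnabla(I_hv-v)\|$ and by $J_f(\phi_j)\big(\sum_e h_e\|\mean{\partial_n(I_hv-v)}\|^2_{L^2(e)}\big)^{1/2}$, and both tend to $0$ by Theorem~\ref{theorem:trace} and interpolation. Then integrate by parts element by element:
\[
(\bnabla_h\phi_j,\bnabla v)_\Omega-(\jump{\phi_j},\partial_n v)_{\Lambda\setminus\Gamma}
=-(\phi_j,\Delta v)_{\Omega^+\cup\Omega^-}+\sum_{\pm}(\phi_j|_{\Omega^\pm},\partial_{\bn^\pm} v)_{\partial\Omega^\pm}.
\]
Every term on the right converges, by the strong $L^2$ convergence of $\phi_j$ and of its traces (your compact-trace argument). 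Green's formula on $\Omega^\pm$ then turns the limit back into $(\bnabla\phi,\bnabla v)_{\Omega\setminus\Gamma}$. Equivalently, use the lifted discrete gradient $\bnabla_h\phi_j-\mathcal L_h\jump{\phi_j}\rightharpoonup\bnabla\phi$, which is what Buffa and Ortner's Theorem~5.2 actually provides. Treat the plate pair the same way with one elementwise integration by parts, using $\bnabla w_j\to\bnabla w$ in $L^2(\Gamma)$ and convergence of its traces on $\partial\Gamma$ (for smooth $\Drig$; otherwise use a Hessian lifting).

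\textbf{Two smaller points.} The remaining two $\Sigma$ terms are not ``analogous'' to the $\Lambda$ ones, because $\jump{\bnabla I_hr\otimes\bn_\Gamma}\neq0$ for a $C^0$ Lagrange interpolant. They do vanish, but only because $J_p(I_hr)=O(h^{r-1})\to0$. Also, \eqref{eq:cea} as literally stated has zero right-hand side (take $\mathbf z_h=\xvec_h$), so it cannot ``follow in the same way''. Only \eqref{eq:cea_final} carries content, and it genuinely requires consistency.
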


\begin{proof}
\smallskip
\noindent\textbf{(i) Discrete inf-sup condition.} We prove the discrete inf-sup condition~\eqref{eq:infsup} by contradiction. Suppose the claim is false, then there exist a sequence of meshes $h_n\le 1/n$ for $n\in\mathbb N$ and a sequence of solution fields $\xvec_{h_n}\in\X_{h_n}$ with $\|\xvec_{h_n}\|_{\X_{h_n}}=1$ such that
\begin{equation}
\label{eq:contra_hyp}
\sup_{\yvec_{h_n}\in\X_{h_n}\setminus\{0\}}\frac{|B(\xvec_{h_n},\yvec_{h_n})|}{\|\yvec_{h_n}\|_{\X_{h_n}}} \;\longrightarrow\; 0 \qquad (\mbox{for }n\to\infty).
\end{equation}
We derive a contradiction with $\|\xvec_{h_n}\|_{\X_{h_n}}=1$.

Selecting $\yvec_{h_n}=\xvec_{h_n}$, the Gårding inequality (\autoref{thm:garding}) gives
\[ 
C_1 A_h(\xvec_{h_n}) \le \operatorname{Re}B(\xvec_{h_n},\xvec_{h_n}) + C_2 E_h(\xvec_{h_n}),
\]
where we have used 
\[ 
A_h(\xvec_h):= \|\bnabla\phi_h\|_\Omega^2 + \sum_{e\in\Lambda\setminus\Gamma} h_e^{-1}\|\jump{\phi_h}\|_{L^2(e)}^2 + \mu_p\|\bnabla^2w_h\|_{\Gamma}^2 + \mu_p \sum_{p\in\Sigma} h_p^{-1} \|\jump{\bnabla w_h\otimes\boldsymbol n_{\mathcal P}}\|_{L^2(p)}^2 ,
\]
and
\[
E_h(\xvec_h):= \|\phi_h\|_{L^2(\Omega)}^2 + \mu_p\|w_h\|_{L^2(\Gamma)}^2 . 
\]
By construction of the energy norm, $\|\xvec_h\|_{\X_h}^2 = A_h(\xvec_h) + E_h(\xvec_h)$. Therefore, for the sequence $\xvec_{h_n}$ with $\|\xvec_{h_n}\|_{\X_{h_n}}=1$, we have $A_{h_n}(\xvec_{h_n}) = 1 - E_{h_n}(\xvec_{h_n}) $. Hence, the Gårding inequality gives
\[
C_1 \le \operatorname{Re}B(\xvec_{h_n},\xvec_{h_n}) + (C_1 + C_2) E_h(\xvec_{h_n}).
\]

Moreover, taking $\yvec_{h_n}=\xvec_{h_n}$ in~\eqref{eq:contra_hyp} gives $|B(\xvec_{h_n},\xvec_{h_n})|\to0$ when $n\to\infty$, hence in particular
$\mathrm{Re}\,B(\xvec_{h_n},\xvec_{h_n})\to0$. Which yields the following infimum limit inequality
\[
\liminf_{n\to\infty} \Big( \|\phi_n\|_{L^2(\Omega)}^2 + \mu_p\|w_n\|_{L^2(\Gamma)}^2 \Big) \ge \frac{C_1}{C_1 + C_2}>0. 
\]
Since $\|\mathbf x_{h_n}\|_{\X_{h_n}}=1$, the normalization also gives $A_{h_n}(\mathbf x_{h_n})\le1$ for every
$n$, thus
\[
\sup_n\Big(\|\nabla\phi_{h_n}\|_{L^2(\Omega)}+J_f(\phi_{h_n})+\mu_p^{1/2}\|\nabla^2w_{h_n}\|_{L^2(\Gamma)}
+\mu_p^{1/2}J_p(w_{h_n})\Big)<\infty.
\]
Invoking \autoref{lem:DGcompactness}, we have that there exist
$\phi\in H^1(\Omega\setminus\Gamma)$ and $w\in H^2(\Gamma)$ such that $\phi_{h_n}-R_h\phi_{h_n}\to0$, $\nabla w_{h_n}-\nabla S_hw_{h_n}\to0$ in $L^2(\Omega)$ and $L^2(\Gamma)$, respectively. Then, in particular, $\phi_{h_n}\to\phi$ in $L^2(\Omega)$ and $\nabla w_{h_n}\to\nabla w$ in $L^2(\Gamma)$.

Let $\xvec:=(\phi,w)\in\X:=H^1(\Omega\setminus\Gamma)\times H^2(\Gamma)$. Fix an arbitrary $\yvec=(v,r)\in\X$
and let $\yvec_{h_n}\in\X_{h_n}$ be a $\X_{h_n}$-conforming interpolant of $\yvec$ with
$\|\yvec-\yvec_{h_n}\|_{\X_{h_n}}\to0$. Since $\yvec$ is a continuous field, the
interpolation additionally gives $J_f(v_{h_n})\to0$, $J_p(r_{h_n})\to0$. Decomposing
$B(\xvec_{h_n},\yvec_{h_n})$ into the five blocks $B_\Omega,B_{\partial\Omega},B_\Lambda,B_\Gamma,B_\Sigma$ of
\autoref{theorem:discrete_continuity}, we have that:
\begin{itemize}
\item[a)] $B_\Omega(\phi_{h_n},v_{h_n})\to(\nabla\phi,\nabla v)_\Omega$ by weak--strong convergence of
$\nabla\phi_{h_n}\rightharpoonup\nabla\phi$ against $\nabla R_hv_{h_n}\to\nabla v$ strongly, together with
$\nabla(\phi_{h_n}-R_h\phi_{h_n})\to0$.
\item[b)] $B_{\partial\Omega}$: the free-surface term converges by the strong $L^2(\partial\Omega)$ convergence
of $R_h\phi_{h_n}\to\phi$ and $R_hv_{h_n}\to v$ (\autoref{lem:DGcompactness}). The radiation term $K(\phi_{h_n},v_{h_n})$ converges term-wise by the same strong trace convergence.
\item[c)] $B_\Lambda$: The terms $(\mean{\partial_n\phi_{h_n}},\jump{v_{h_n}})_{\Lambda\setminus\Gamma}$ and $(\mu_1\jump{\phi_{h_n}},\jump{v_{h_n}})_{\Lambda\setminus\Gamma}$ are bounded, see Step 3 in \autoref{theorem:discrete_continuity}, and vanish since $J_f(v_{h_n})\to0$. For the symmetric term we use $v_{h_n}\to v$ strongly in $H^1(\Omega)$, multiplied against the bounded $J_f(\phi_{h_n})$, thus it also vanishes. Hence $B_\Lambda\to0$.
\item[d)] $B_\Gamma$: converges via strong $L^2(\Gamma)$ convergence of $\jump{\phi_{h_n}}\to\jump\phi$ and $w_{h_n},S_hw_{h_n}\to w$ strongly in $H^1(\Gamma)\hookrightarrow L^2(\Gamma)$.
\item[e)] $B_\Sigma$: analogous Cauchy--Schwarz splitting using \autoref{lemma:inverse_sigma} and $J_p(r_{h_n})\to0$ gives $B_\Sigma\to0$.
\end{itemize}
Collecting the five limits,
\[
B(\xvec_{h_n},\yvec_{h_n})\;\longrightarrow\;B(\xvec,\yvec).
\]
On the other hand, $|B(\xvec_{h_n},\yvec_{h_n})|\le\|\yvec_{h_n}\|_{\X_{h_n}}
\sup_{\yvec_{h_n}\ne0}|B(\xvec_{h_n},\yvec_{h_n})|/\|\yvec_{h_n}\|_{\X_{h_n}}\to0$ by~\eqref{eq:contra_hyp} and
boundedness of $\|\yvec_{h_n}\|_{\X_{h_n}}$. Since $\yvec\in \X$ was arbitrary,
$B(\xvec,\yvec)=0$ for all $\yvec\in \X$.

By \autoref{ass:continuous_nonresonance}, $B(\xvec,\yvec)=0\ \forall \yvec\in \X$
implies $\xvec=0$, i.e.\ $\phi=0$ and $w=0$. In particular
$\|\phi_{h_n}\|_{\Omega_{h_n}}\to\|\phi\|_{L^2(\Omega)}=0$ and $\|w_{h_n}\|_{\Gamma_{h_n}}\to\|w\|_{L^2(\Gamma)}=0$. This contradicts
\[
\liminf_{n\to\infty} \Big( \|\phi_n\|_{L^2(\Omega)}^2 + \mu_p\|w_n\|_{L^2(\Gamma)}^2 \Big) \ge \frac{C_1}{C_1+C_2}>0,
\]
for fixed $C_1>0$ and $C_2>0$. This contradiction shows that no such sequence $(h_n,\xvec_{h_n})$ exists, i.e.\ there is $h_0>0$ and $\gamma>0$ such that~\eqref{eq:infsup} holds for all $h<h_0$.

\smallskip
\noindent\textbf{(ii) Existence, uniqueness, stability.}
Fix $h<h_0$ and a basis of $\X_h$; let $\mathbf B_h$ be the matrix
representation of $B$ and $\mathbf L_h$ the vector representing $L$, so
that~\eqref{eq:discrete_problem} is the square linear system
$\mathbf B_h\mathbf x_h=\mathbf L_h$. Suppose $\mathbf B_h\mathbf x_h=0$ for
some $\xvec_h\in\X_h$; then $B(\xvec_h,\yvec_h)=0$ for every
$\yvec_h\in\X_h$, so the numerator in~\eqref{eq:infsup} vanishes at
$\xvec_h$, and~\eqref{eq:infsup} with $\gamma>0$ forces
$\|\xvec_h\|_{\X_h}=0$, i.e.\ $\xvec_h=0$. Thus $\mathbf B_h$ has trivial
kernel; since $\X_h$ is finite-dimensional, $\mathbf B_h$ is square, and a
square matrix with trivial kernel is invertible. Hence a unique
$\xvec_h\in\X_h$ solving~\eqref{eq:discrete_problem} exists for every $L$.
For the stability bound, apply~\eqref{eq:infsup} to this $\xvec_h$ and use
$B(\xvec_h,\yvec_h)=L(\yvec_h)$:
\[
\gamma\,\|\xvec_h\|_{\X_h} \le \sup_{\yvec_h\ne0}\frac{|B(\xvec_h,\yvec_h)|}{\|\yvec_h\|_{\X_h}} = \sup_{\yvec_h\ne0}\frac{|L(\yvec_h)|}{\|\yvec_h\|_{\X_h}} = \|L\|_{\X_h'} \le C_\ell,
\]
and dividing by $\gamma>0$ gives $\|\xvec_h\|_{\X_h}\le\gamma^{-1}\|L\|_{\X_h'}\le C_\ell\,\gamma^{-1}$.

\smallskip
\noindent\textbf{(iii) Quasi-optimality.}
For an arbitrary
$\mathbf z_h\in\X_h$, since $\xvec_h-\mathbf z_h\in\X_h$,
inequality~\eqref{eq:infsup} gives
\[
\gamma\,\|\xvec_h-\mathbf z_h\|_{\X_h} \le \sup_{\mathbf y_h\ne0}\frac{|B(\xvec_h-\mathbf z_h,\mathbf y_h)|}{\|\mathbf y_h\|_{\X_h}};
\]
under \autoref{ass:regularity}, \autoref{lem:consistency} gives
Galerkin orthogonality $B(\xvec_h-\xvec,\yvec_h)=0\ \forall\yvec_h\in\X_h$,
hence $B(\xvec_h-\mathbf z_h,\yvec_h)=B(\xvec-\mathbf z_h,\yvec_h)$; \autoref{lem:cont_ext}
bounds the right-hand side by
$\widetilde M_h\vertiii{\xvec-\mathbf z_h}_h\|\yvec_h\|_{\X_h}$; dividing by
$\gamma$ gives $\|\xvec_h-\mathbf z_h\|_{\X_h}\le(\widetilde M/\gamma)\vertiii{\xvec-\mathbf z_h}_h$;
the triangle inequality and $\|\xvec-\mathbf z_h\|_{\X_h}\le\vertiii{\xvec-\mathbf z_h}_h$
give $\|\xvec-\xvec_h\|_{\X_h}\le(1+\widetilde M/\gamma)\vertiii{\xvec-\mathbf z_h}_h$;
taking the infimum over $\mathbf z_h\in\X_h$ yields~\eqref{eq:cea_final}.
The unconditional bound~\eqref{eq:cea} (without \autoref{ass:regularity})
follows the same computation with $M,\|\cdot\|_{\X_h}$ in place of
$\widetilde M,\vertiii{\cdot}_h$.
\end{proof}

\section{Numerical results}\label{sec: results}\noindent

\subsection{Implementation}\noindent
We implement the formulation in Section~\ref{sec: discrete formulation} using Gridap \citep{Badia2020,Verdugo2022}, a flexible finite element package written in the Julia programming language. To generate meshes, we use Gmsh \citep{10.1002/nme.2579} and GridapGmsh \citep{gridapgmsh}. Our implementation is open-source, and a link is provided in the \textit{Data availability} section.

As discussed in Section~\ref{sec: sommerfeld 2d} and \ref{sec: sommerfeld 3d}, we implement the Sommerfeld radiation conditions using outer products to discretise $K(\phi,v)$ given by \eqref{eqn: K op 2d} in the case of the two-dimensional problem, or \eqref{eqn: K op 3d} in the case of the three-dimensional channel. For the penalty parameters $\mu_1$ and $\mu_2$ that appear in the formulation, we take $\mu_1=\mu_2=Dr(r+1)/h_e$ where $r$ is the order of the Lagrange polynomials for $\hat{\mathcal{V}}_h$ and $\hat{\mathcal{V}}_{\Gamma,h}$ and $h_e$ is the element size. For all examples, we use 2nd-order Lagrange polynomials, i.e., $r=2$ for both $\hat{\mathcal{V}}_h$ and $\hat{\mathcal{V}}_{\Gamma,h}$. Note that the polynomial order can be increased in our implementation.

\subsection{Validation}\noindent
To validate our formulation and implementation, we consider a submerged horizontal plate as shown in Figure~\ref{fig:schematic} and compare to the solution method of \citet{Wegert_Wilks_Thamwattana_Challis_Koley_Meylan_2026}. In that previous work, the problem is solved by formulating the diffraction and radiation problems as hypersingular boundary integral equations and using a expansion in the plate modes. For the validation, we  take the plate half-length to be $L=10$~m, the fluid depth to be $H=10$~m, the submergence depth to be $h=2$~m, the density of the fluid to be $\rho_w=1025$~kg~m$^{-3}$, the incident wave amplitude to be $A=1$~m, and gravity to be $9.81$~m~s$^{-2}$. For the plate properties, we take the Young's modulus to be $E=3 \times 10^{12}$~Pa, the Poisson's ratio to be $\nu=0.392$, the thickness to be $d=0.01$~m, and the plate density to be $\rho=1780$~kg~m$^{-3}$. This yields a rigidity of roughly $D=2.95 \times 10^5$~Pa~m$^3$ and a mass per unit length of $I_0=17.8$~kg~m$^{-2}$. For the finite element method, we take the computational domain to have half-length $L_f=15$~m and the number of modes in the operator imposing the Sommerfeld radiation conditions to be $M=5$. Finally, we consider free boundary conditions.

\begin{figure}[p]
    \centering
    \includegraphics[width=\linewidth]{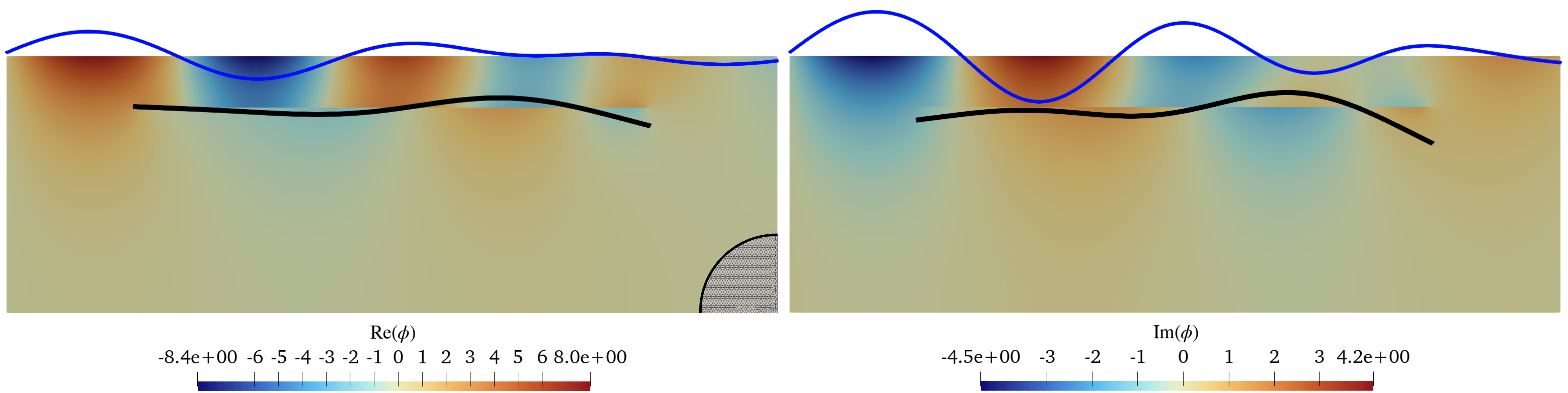}
    \caption{The real (left) and imaginary (right) parts of the finite element solution for the submerged plate hydroelasticity problem for an incident wave of period 3~s. We visualise the potential $\phi$ using a colour map. The free-surface displacement and plate displacement are visualised by a unitary warping of the surface and plate mesh, respectively. The mesh size is approximately $h_{\rm size}=0.1$\,m and is visualised in the lower right part of the real solution for $\phi$.}
    \label{fig:horz_phi_eta_w}
    \centering
    \begin{subfigure}[c]{0.13\textwidth}
        \centering
        \includegraphics[width=1\linewidth]{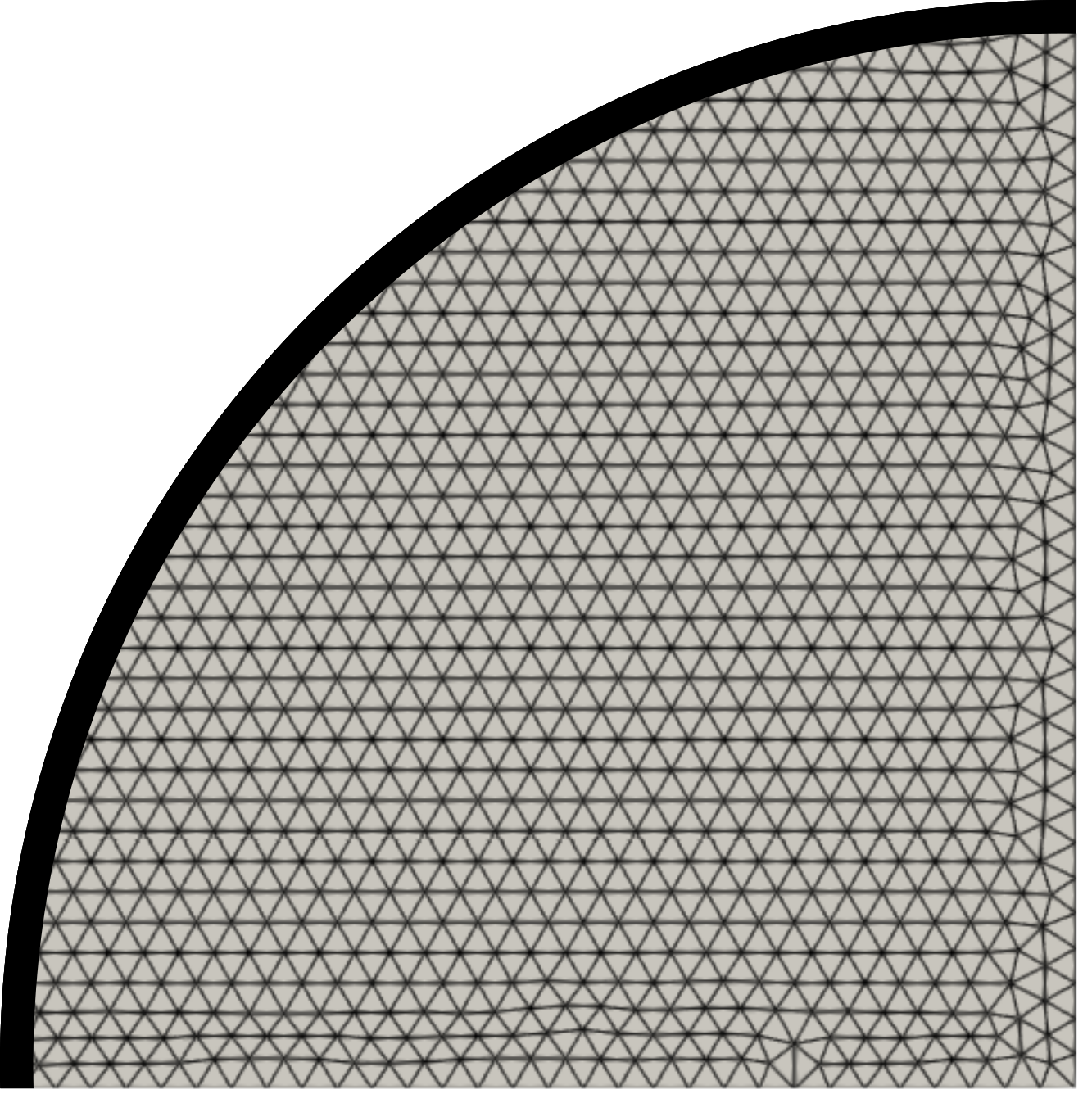}
        $h_{\rm size}=0.1$
    \end{subfigure}
    \begin{subfigure}[c]{0.845\textwidth}
        \centering
        \begin{subfigure}{0.49\textwidth}
            \centering
            \includegraphics[width=0.7\linewidth]{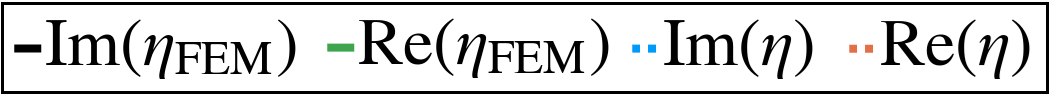}
            \includegraphics[width=1\linewidth]{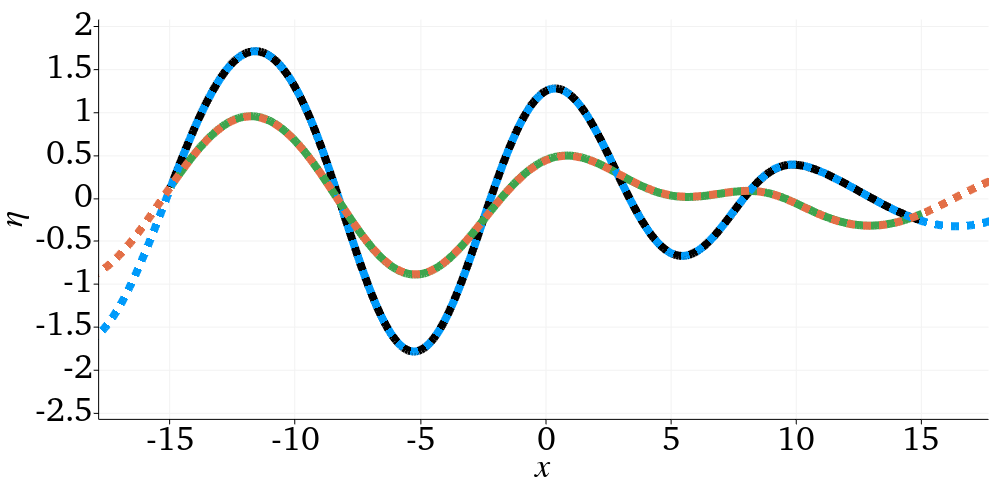}
        \end{subfigure}
        \begin{subfigure}{0.49\textwidth}
            \centering
            \includegraphics[width=0.7\linewidth]{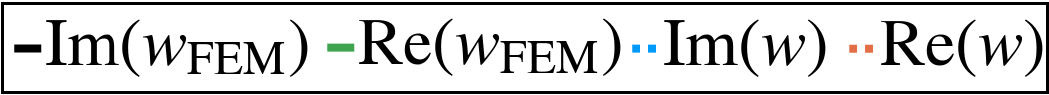}
            \includegraphics[width=1\linewidth]{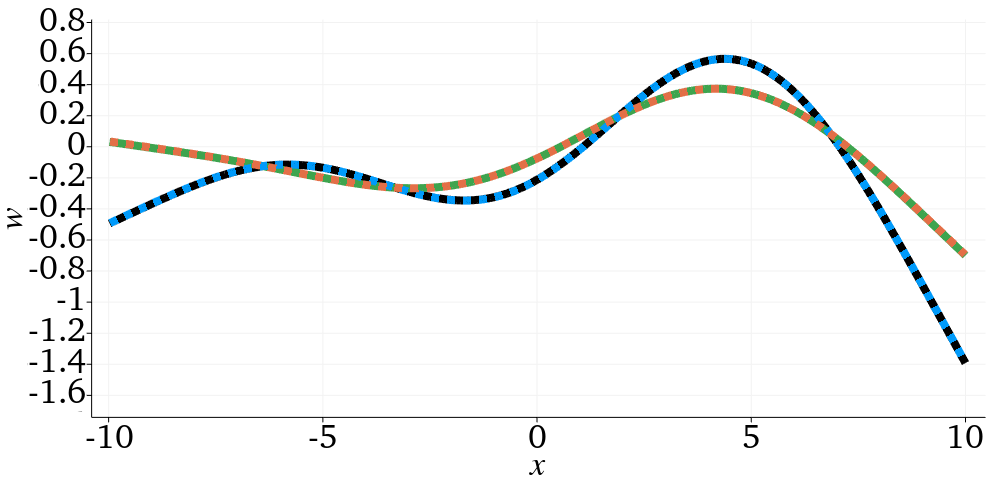}
        \end{subfigure}
    \end{subfigure}
    \begin{subfigure}[c]{0.13\textwidth}
        \centering
        \includegraphics[width=1\linewidth]{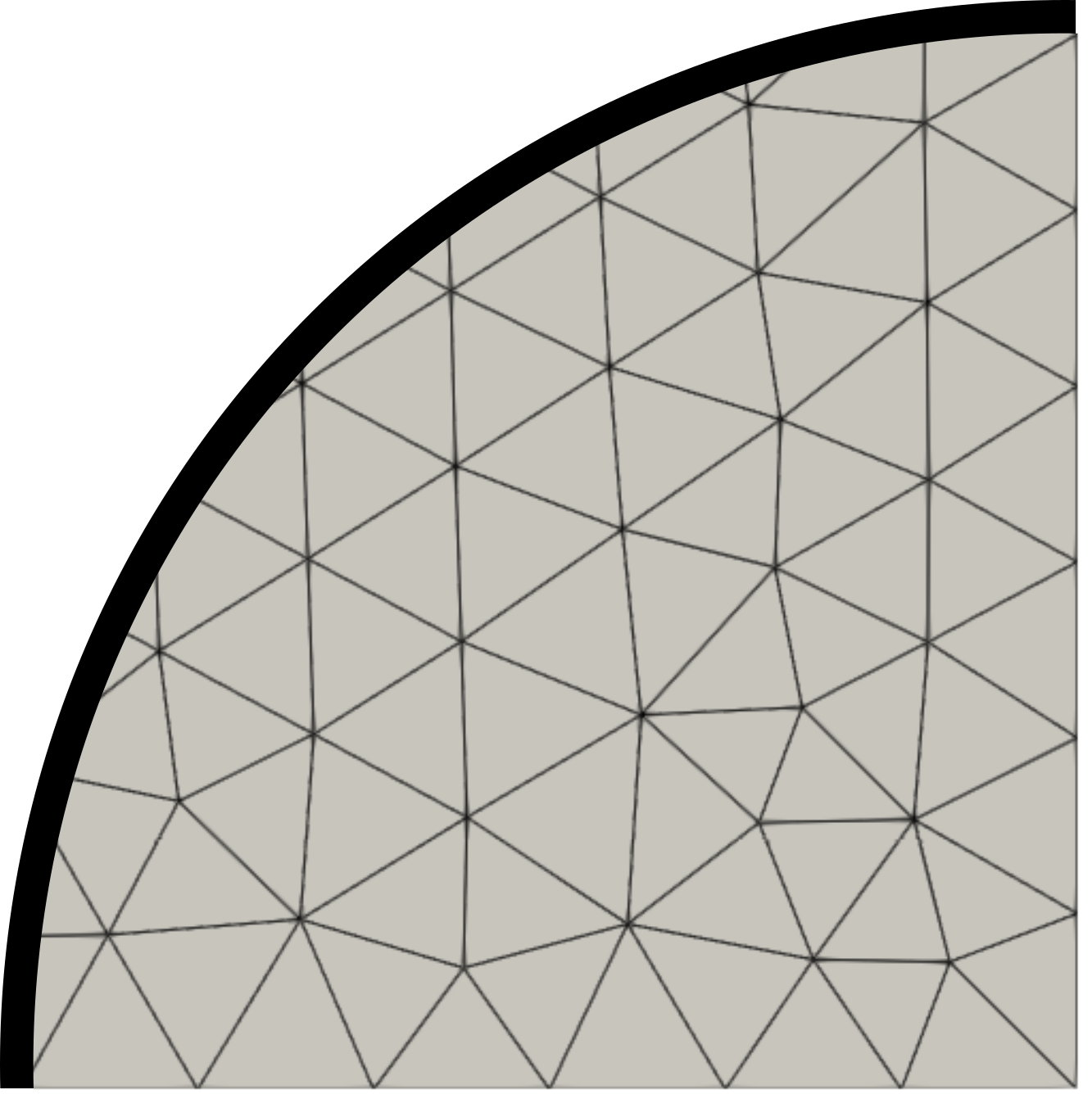}
        $h_{\rm size}=0.5$
    \end{subfigure}
    \begin{subfigure}[c]{0.845\textwidth}
        \centering
            \includegraphics[width=0.49\linewidth]{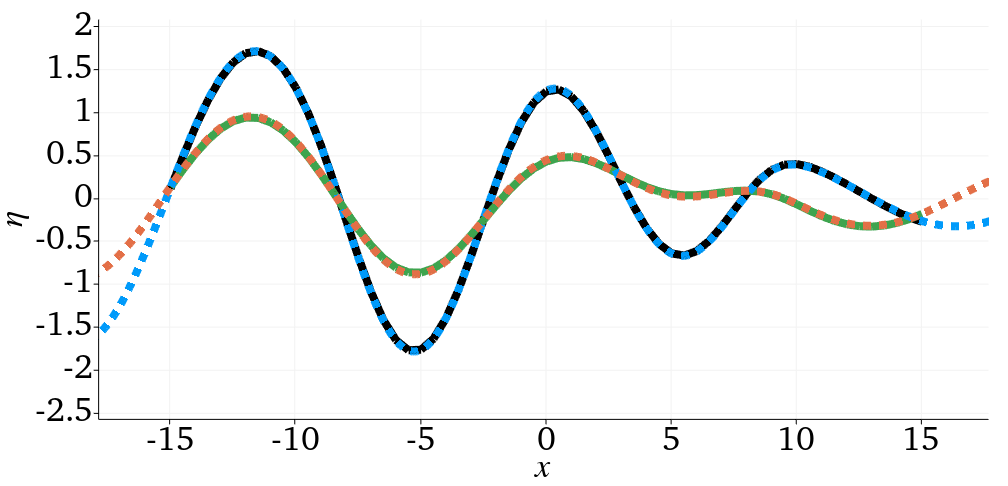}
            \includegraphics[width=0.49\linewidth]{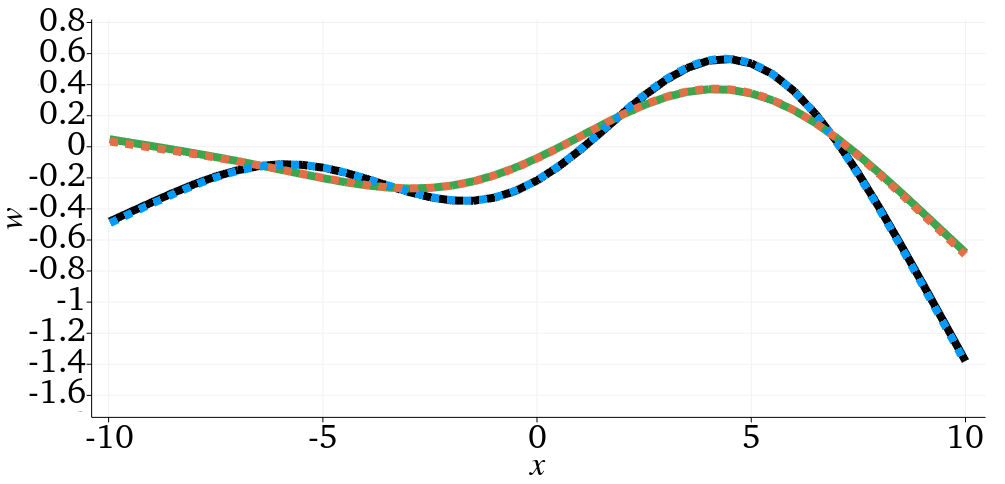}
    \end{subfigure}
    \begin{subfigure}[c]{0.13\textwidth}
        \centering
        \includegraphics[width=1\linewidth]{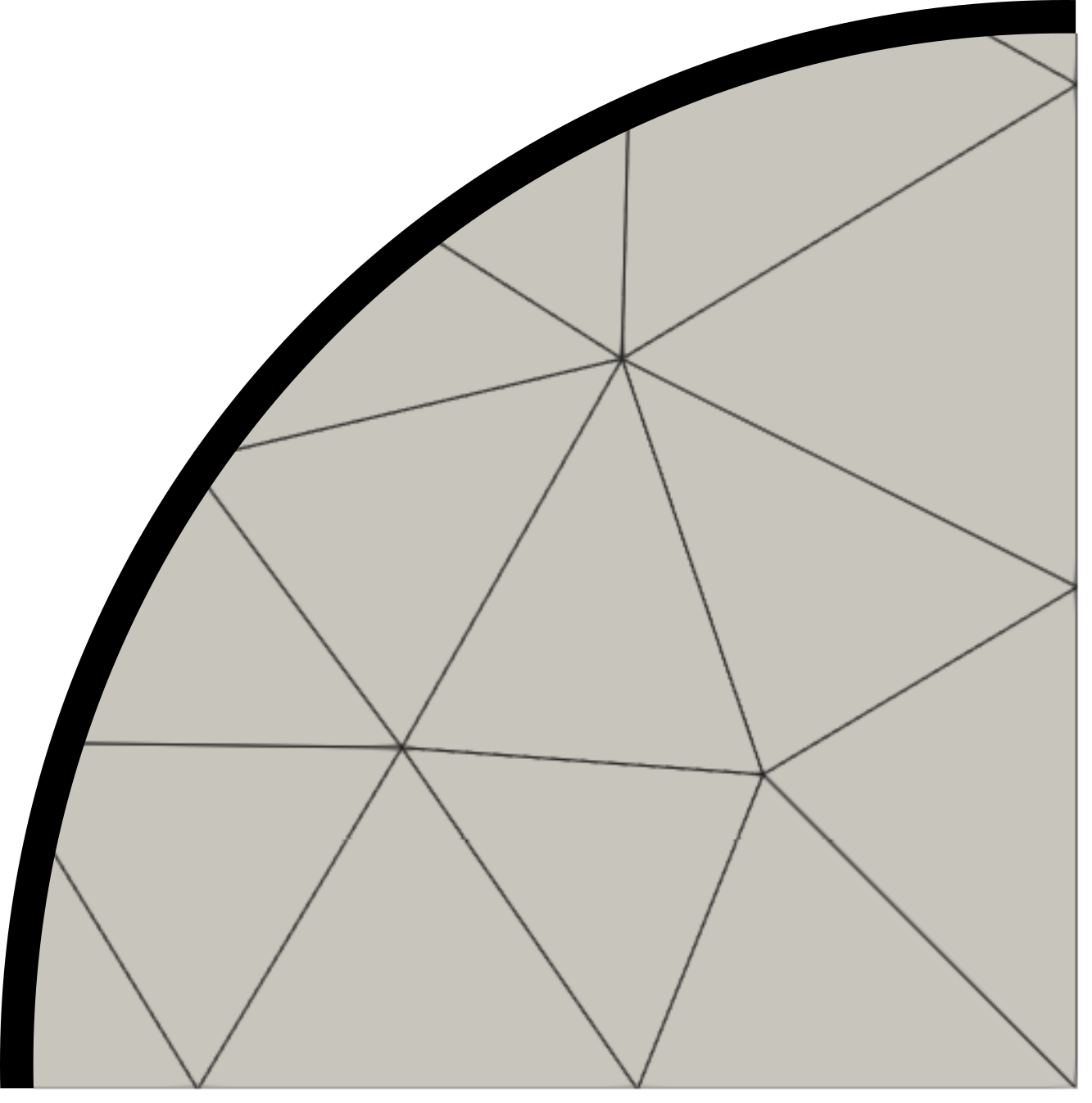}
        $h_{\rm size}=1.5$
    \end{subfigure}
    \begin{subfigure}[c]{0.845\textwidth}
        \centering
        \includegraphics[width=0.49\linewidth]{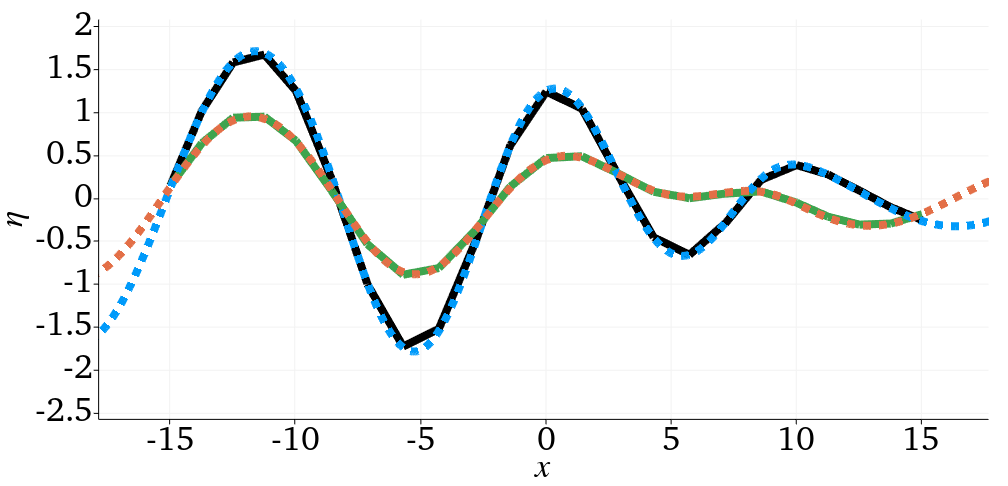}
        \includegraphics[width=0.49\linewidth]{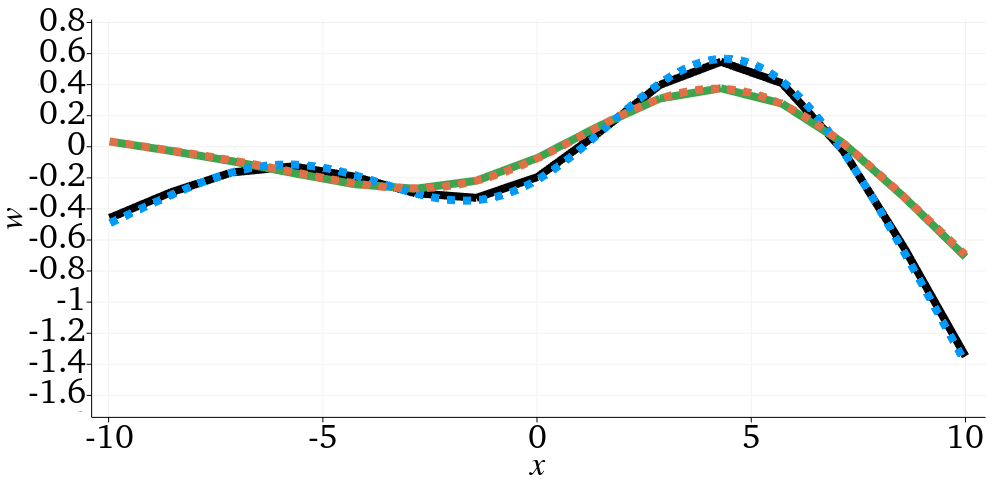}
    \end{subfigure}
    \caption{Comparison between the proposed finite element method and the approach of \citet{Wegert_Wilks_Thamwattana_Challis_Koley_Meylan_2026} for the submerged plate hydroelasticity problem for an incident wave of period 3~s using different mesh sizes.}
    \label{fig:horz_mesh_results}

    \centering
    \captionof{table}{The reflection and transmission coefficients for the submerged plate hydroelasticity problem for an incident wave of period 3~s using various mesh sizes. The second column gives the results using the approach of \citet{Wegert_Wilks_Thamwattana_Challis_Koley_Meylan_2026}.}
    \label{tab:R_and_T_horz}
    \footnotesize
    \begin{tabular}{c|c|c|c|c|c|c}
    Case & \citet{Wegert_Wilks_Thamwattana_Challis_Koley_Meylan_2026} & $h_{\rm size}=0.1$ & $h_{\rm size}=0.25$ & $h_{\rm size}=0.5$ & $h_{\rm size}=1.0$ & $h_{\rm size}=1.5$ \\ \hline
    $\lvert R\rvert$ & 0.941 & 0.942 & 0.940 & 0.938 & 0.937 & 0.943 \\
    $\lvert T\rvert$ & 0.336 & 0.334 & 0.339 & 0.346 & 0.348 & 0.330 \\
    \end{tabular}
\end{figure}

\begin{figure}
    \centering
    \includegraphics[width=0.9\linewidth]{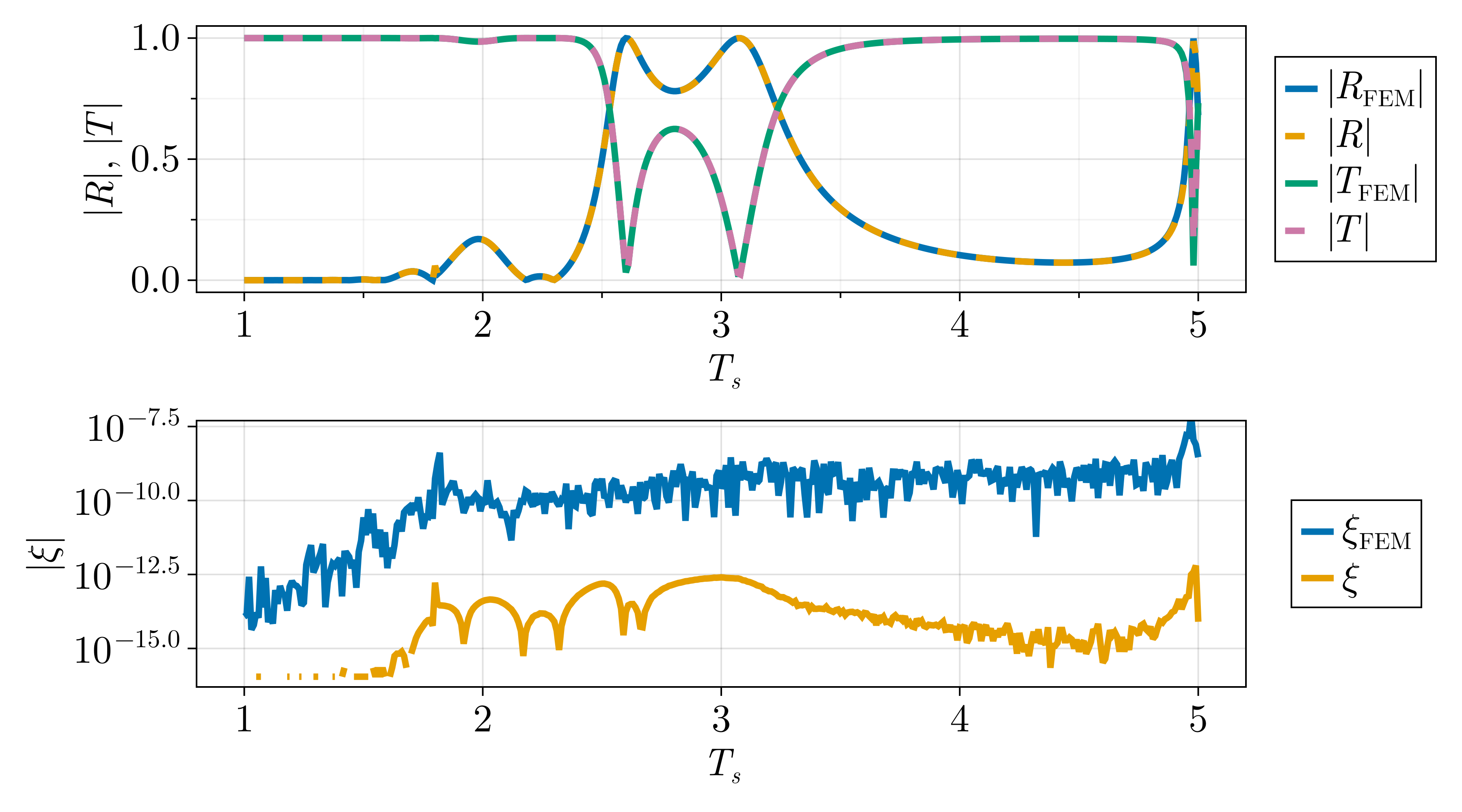}
    \caption{Computed reflection and transmission coefficients for the submerged plate hydroelasticity problem for a range of incident wave periods. The energy conservation $\xi\coloneqq\lvert R\rvert^2+\lvert T\rvert^2-1$ is also shown. }
    \label{fig:ref_trans_2d_compare}
\end{figure}

Figure~\ref{fig:horz_phi_eta_w} shows the real and imaginary parts of the finite element solution for an incident wave of period 3\,s. For this example, we require that the entire mesh have an approximate size of $h_{\rm size}=0.1$\,m, although the mesh could be coarsened near the ocean floor. We visualise the real and imaginary parts of the free-surface displacement and plate displacement by warping the corresponding edges with a scaling of one. The free-surface displacement $\eta$ is calculated as $\eta = \frac{i \omega \phi}{g}$ at $z=0$, which is obtained from the dynamic free surface condition \citep{Wegert_Wilks_Thamwattana_Challis_Koley_Meylan_2026}. In Figure~\ref{fig:horz_mesh_results}, we compare the free-surface displacement and the plate displacement from the proposed finite element scheme with the approach of \citet{Wegert_Wilks_Thamwattana_Challis_Koley_Meylan_2026} for different mesh sizes. This shows excellent agreement, even when the mesh is extremely coarse relative to the size of the problem. In Table~\ref{tab:R_and_T_horz}, we give the reflection and transmission coefficients computed using \eqref{eqn: reflection 2d} and \eqref{eqn: transmission 2d} for $n=0$. Note that $R$ and $T$ are scaled by $1/\beta$ to satisfy energy balance, $\lvert R\rvert^2+\lvert T\rvert^2\approx1$. Again, we see good agreement between the finite element approach and the approach of \citet{Wegert_Wilks_Thamwattana_Challis_Koley_Meylan_2026}. Finally, we consider computing the reflection and transmission coefficients for incident wave periods ranging from 1\,s to 5\,s. For these results, we use an approximate mesh size of $h_{\rm size}=0.1$~m. Figure~\ref{fig:ref_trans_2d_compare} shows the results for the reflection and transmission coefficients, as well as energy conservation $\xi\coloneqq\lvert R\rvert^2+\lvert T\rvert^2-1=0$. We see that both methods are in good agreement over a range of wave periods. 

\subsection{Slanted plates}\noindent
In this section, we consider the case of a slanted plate with end points $(-L,-h_1)$ and $(L,-h_2)$. Extending the implementation to this case requires no adjustments to the code, other than the mesh generation. This is of considerable benefit, as the approach of \citet{Wegert_Wilks_Thamwattana_Challis_Koley_Meylan_2026} would require several modifications.

For this example, we take $L=10$~m, $h_1=0.5$\,m, $h_2=7$\,m, and the mesh size to be approximately $h_{\rm size}=0.25$\,m. The remaining parameters are as in the example shown in Figure~\ref{fig:horz_phi_eta_w}. We take the boundary conditions on the ends of the plate to either be both free or both simply-supported. In Figure~\ref{fig:slanted} we give the results for the potential $\phi$, free-surface displacement $\eta$, and the plate displacement $w$. In addition, we find that the reflection and transmission coefficients are approximately $\lvert R\rvert=0.756$ and $\lvert T\rvert=0.655$ for free boundary conditions and $\lvert R\rvert=0.969$ and $\lvert T\rvert=0.248$ for simply-supported boundary conditions.

\begin{figure}[!t]
    \centering
    \begin{subfigure}{\textwidth}
        \centering
        \includegraphics[width=0.49\linewidth]{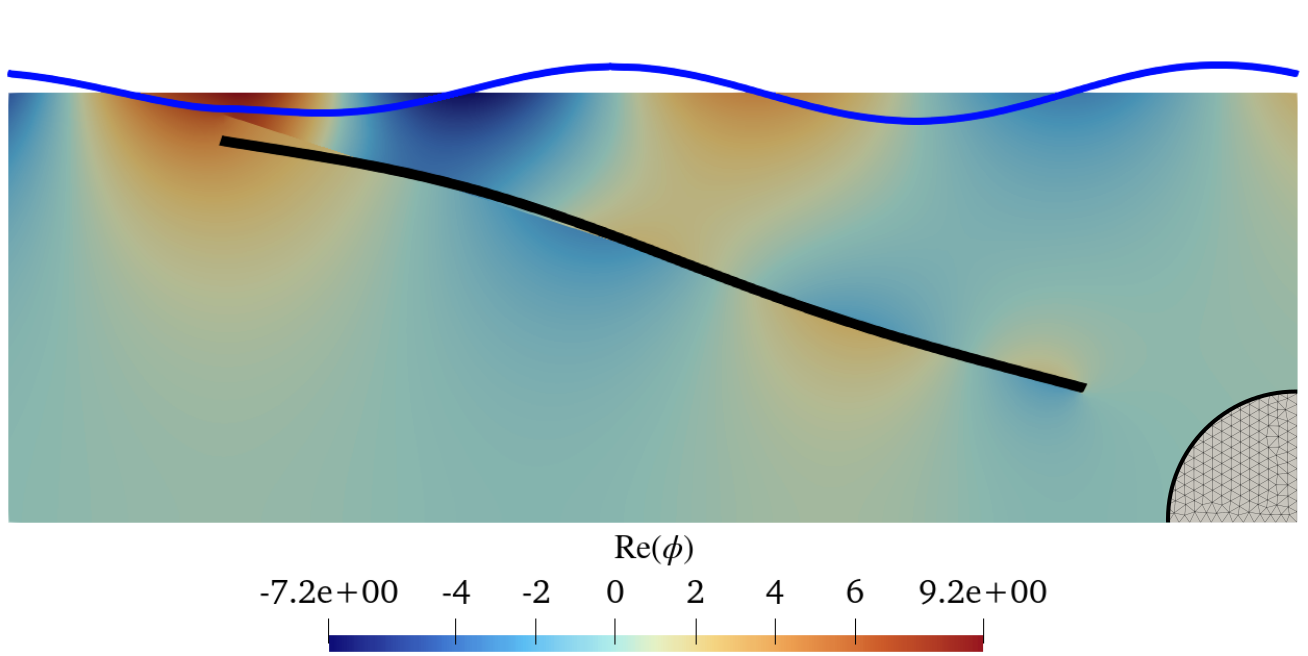}
        \includegraphics[width=0.49\linewidth]{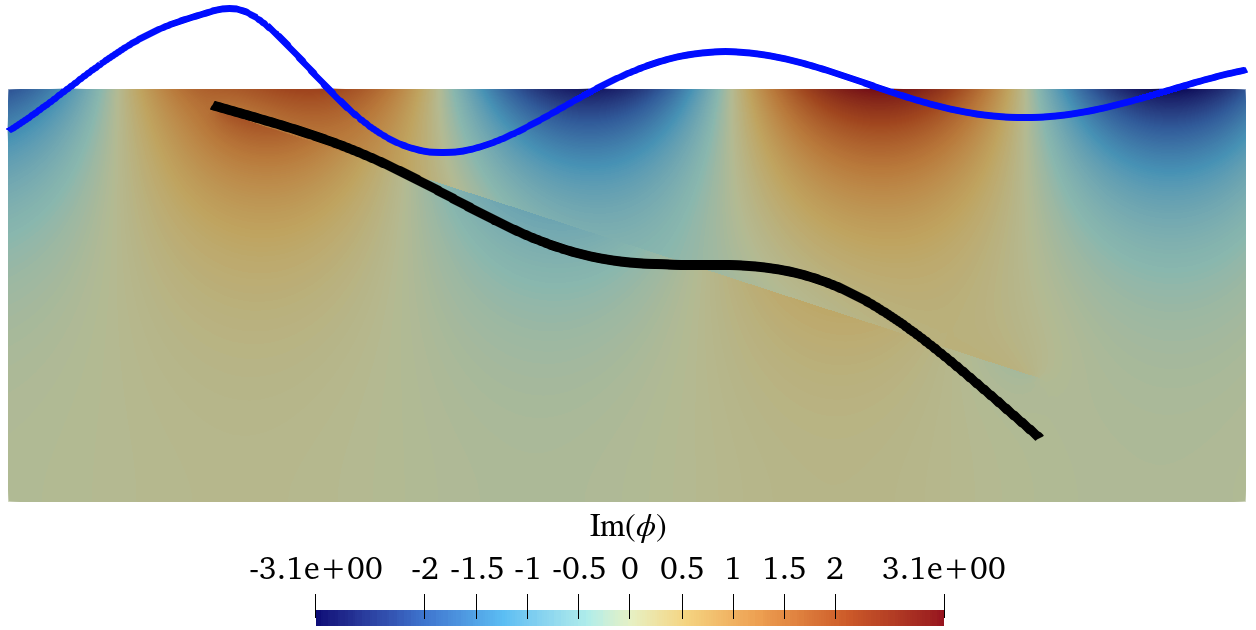}
        \caption{Free boundary conditions}
    \end{subfigure}
    \begin{subfigure}{\textwidth}
        \centering
        \includegraphics[width=0.49\linewidth]{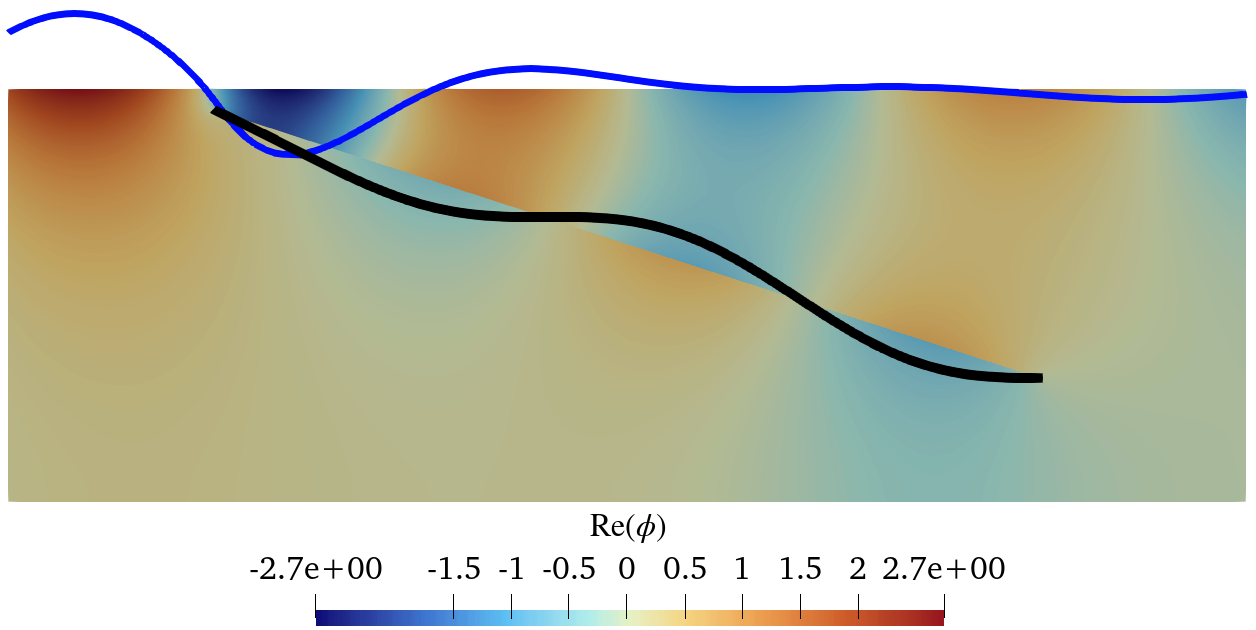}
        \includegraphics[width=0.49\linewidth]{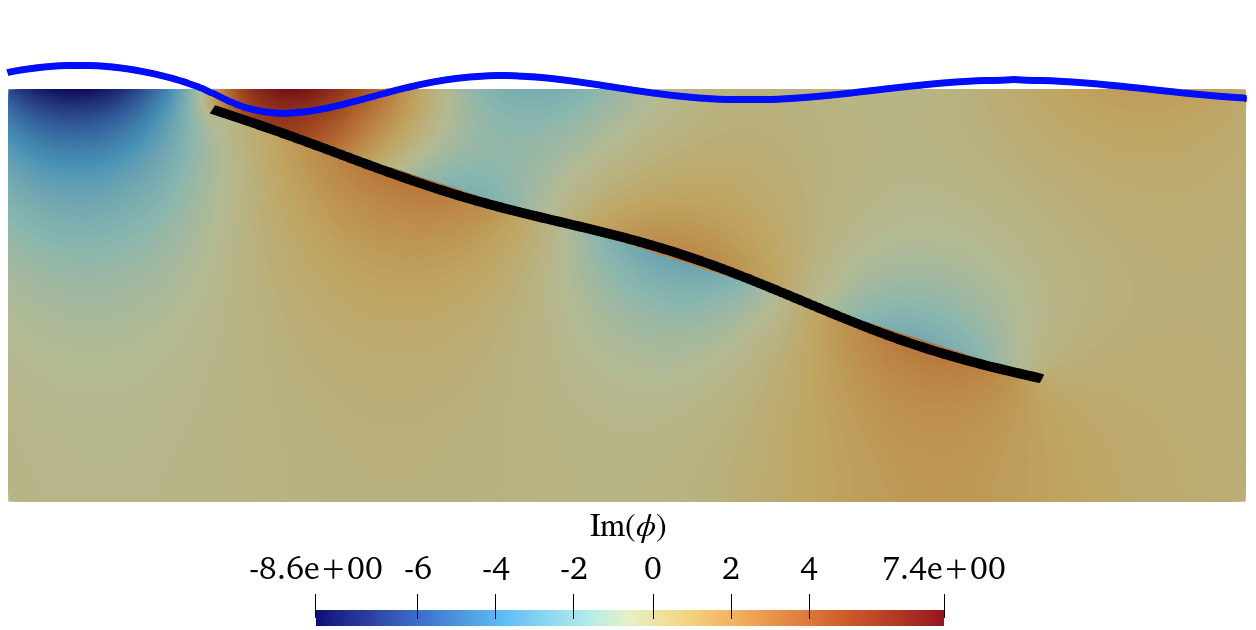}
        \caption{Simply-supported boundary conditions}
    \end{subfigure}
    \caption{Results for the slanted submerged plate for different boundary conditions and an incident wave of period 3\,s. The two cases are (a) free boundary conditions and (b) simply-supported boundary conditions.  }
    \label{fig:slanted}
\end{figure}


\subsection{Three-dimensional channel}\noindent
In this section we consider the case of the three-dimensional channel for three different plate configurations: (i) a centred rectangular plate, (ii) an offset rectangular plate, and (iii) an annulus-shaped plate.

For all cases, we take the half-width of the channel to be $B_f=15$\,m, the half-length of the computational domain in $x$ to be $L_f=12.5$\,m, the submergence depth of the plates to be $h=1$~m, the incident wave to be planar with period 4\,s and amplitude $A=1$\,m, the mesh size at the top of the fluid and on the plate to be approximately 0.25\,m, the mesh size to be 1.5\,m on the channel floor, and the plate to be isotropic with the same Young's modulus and Poisson's ratio as in the two-dimensional examples. In the non-local operator $K$ that imposes the Sommerfeld radiation conditions, we take the number of modes to be $M=5$ and $N=5$. We note that in three dimensions, $N$ should be large enough so that propagating modes are captured in the sum in Equation~\eqref{eqn: K op 3d}. In other words, $N$ should at least be equal to $q$, where $q$ is the largest integer $i$ such that $k_{0i}$ is real. Finally, to solve the linear system we use an iterative induced dimension reduction method \citep{doi:10.1137/070685804} that is preconditioned with an incomplete LU factorisation. Although this solver is sufficient for the examples in this paper, it is not scalable and in the future focus should be devoted to developing scalable linear solvers for these problems. 

For the centred and offset rectangular plates, the plate half-length is taken to be $L=10$\,m and the plate half-width is taken to be $B=5$\,m. For the offset plate, we shift the plate in the $y$ direction by 3\,m. Figure~\ref{fig:3d_meshes rect} shows the mesh used for the centred rectangular plate. Note that the case of the offset plate is similar. 
\begin{figure}[!t]
    \centering
    \begin{subfigure}{0.49\linewidth}
        \centering
        \includegraphics[width=1\linewidth]{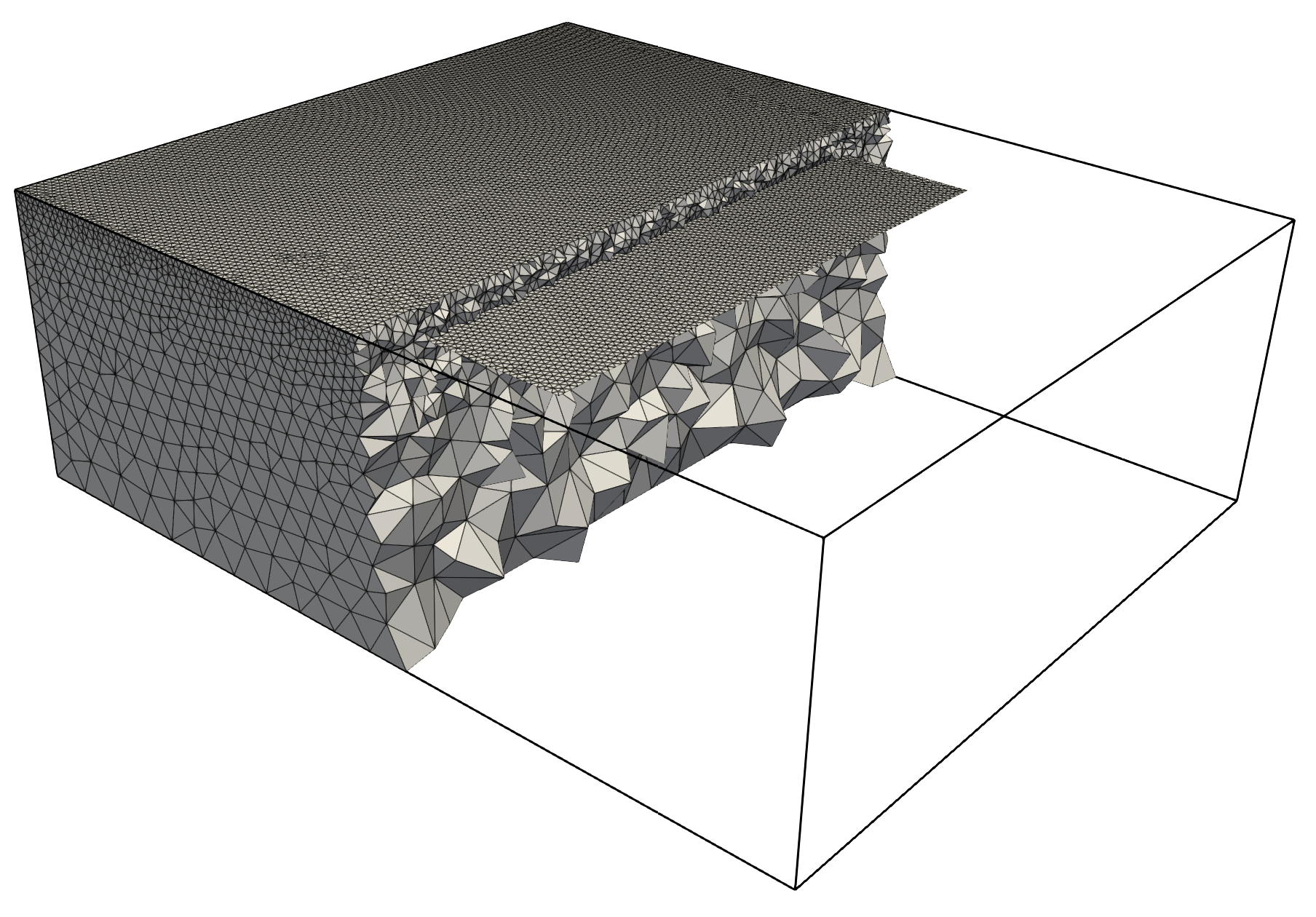}
        \caption{}
        \label{fig:3d_meshes rect}
    \end{subfigure}
    \begin{subfigure}{0.49\linewidth}
        \centering
        \includegraphics[width=1\linewidth]{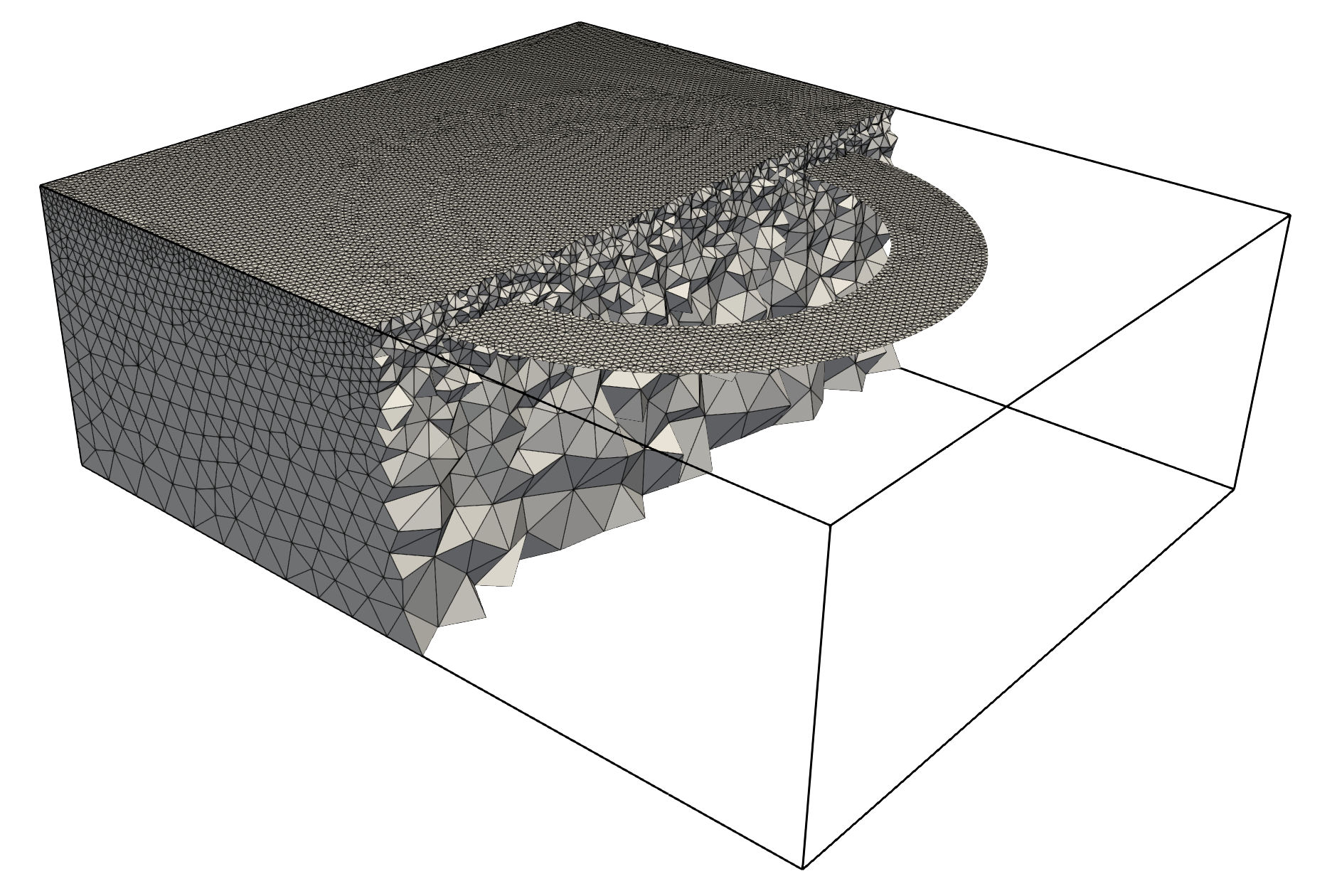}
        \caption{}
        \label{fig:3d_meshes annulus}
    \end{subfigure}
    \caption{The meshes used for the three-dimensional channel examples. The two cases shown are (a) the centred rectangular plate, and (b) the annulus-shaped plate. So that the plate is visible, we only visualise half of the full background mesh. Note that the mesh for the offset rectangular plate is similar to that shown in (a).}
    \label{fig:3d_meshes}
\end{figure}
In Figures~\ref{fig:3d_rect} and \ref{fig:3d_rect_offset} we show the real and imaginary parts of the free-surface displacement and plate displacement for the centred rectangular plate and the offset rectangular plate, respectively. Tables~\ref{tab:3d_rect} and \ref{tab:3d_rect_offset} give the values of the reflection and transmission coefficients for the propagating modes for the centred rectangular plate and offset rectangular plate, respectively. As expected, the reflection and transmission coefficients corresponding to the symmetric mode, $n=2$, become large for the offset plate due to the non-symmetry of the system.
\begin{figure}[!t]
    \centering
        \includegraphics[trim={6.5cm 0 4cm 0},clip,width=0.49\linewidth]{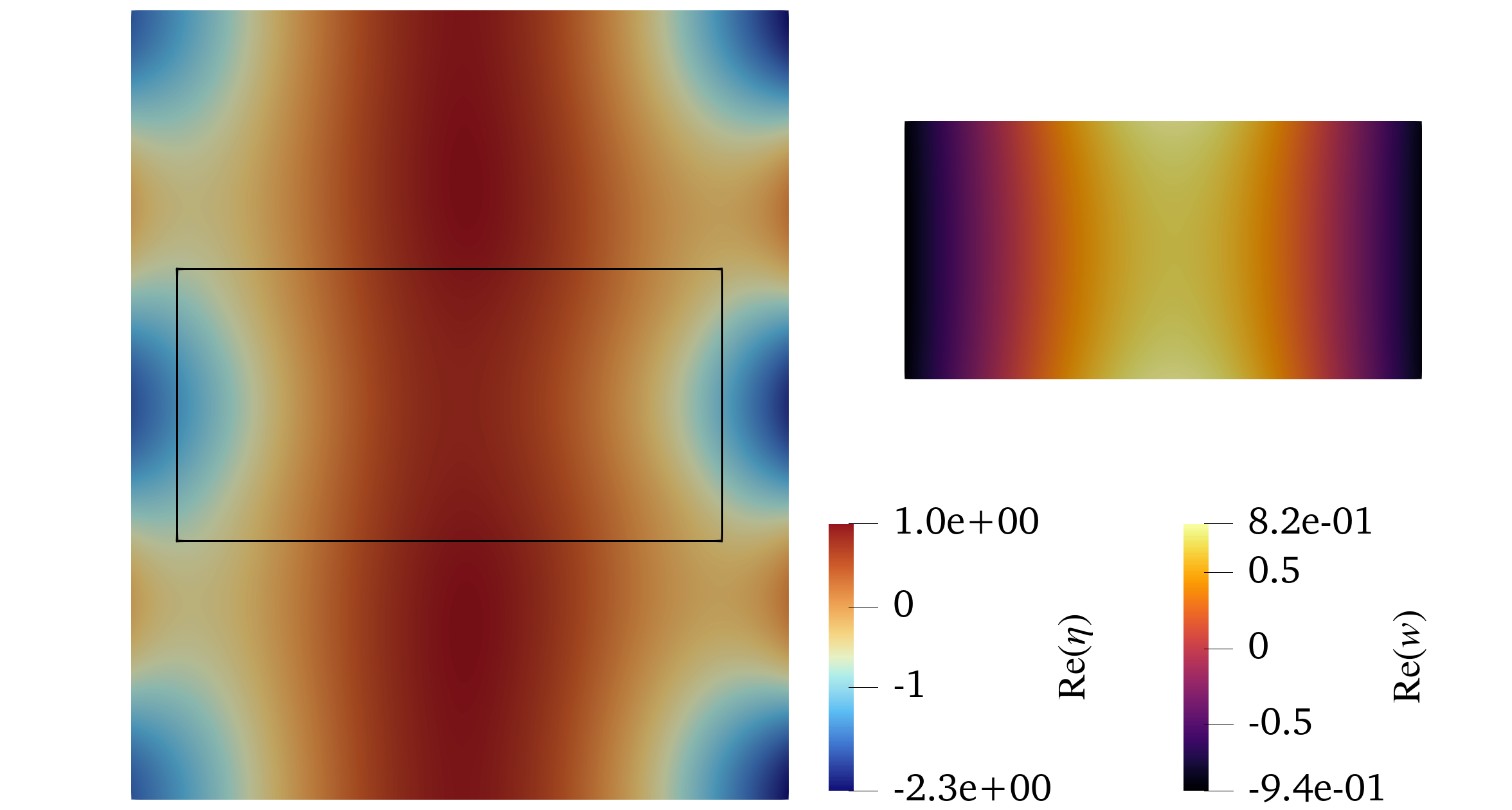}
        \includegraphics[trim={6.5cm 0 3.5cm 0},clip,width=0.49\linewidth]{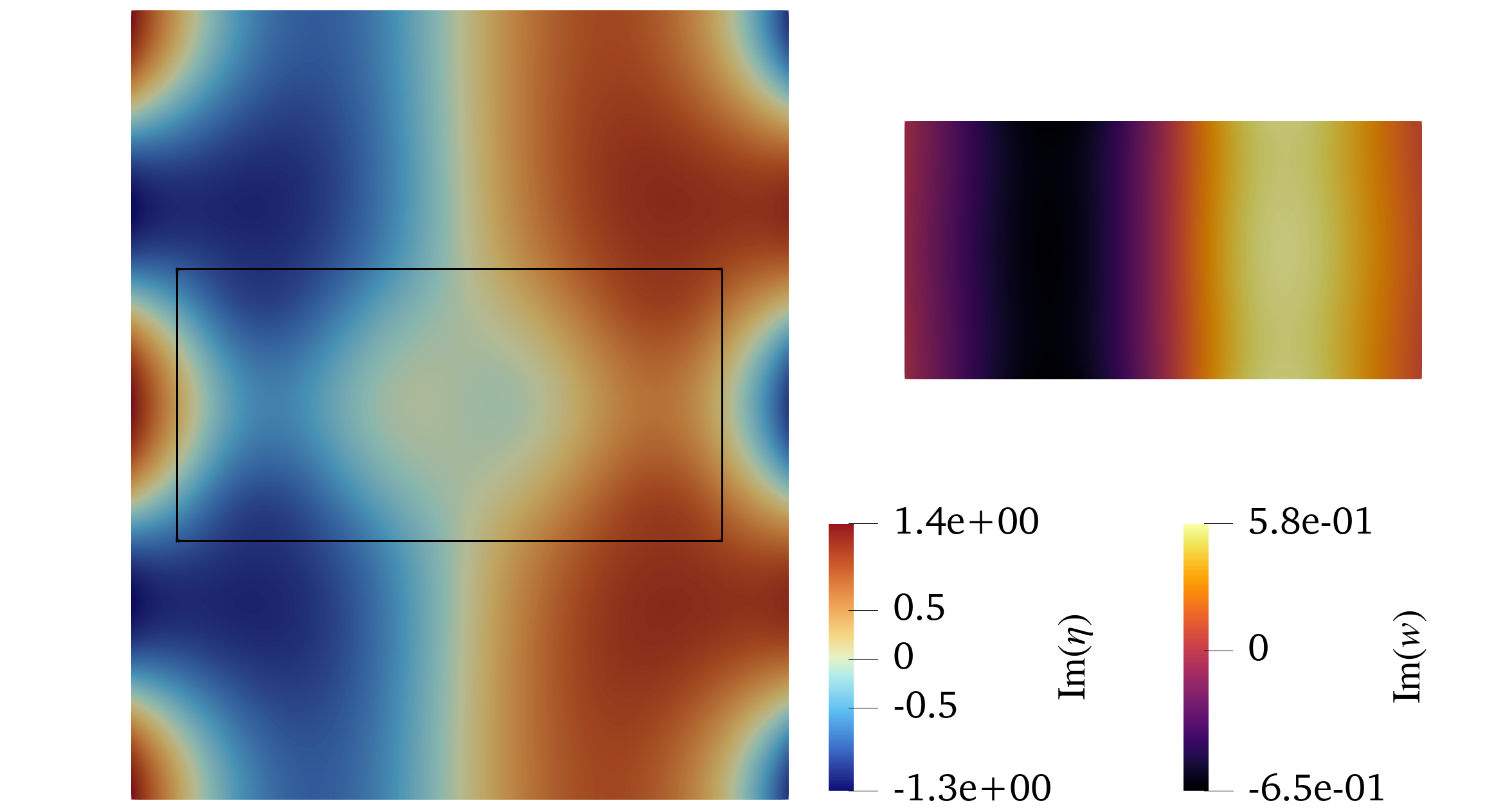}
    \caption{Visualisations of the real and imaginary parts of the free-surface displacement and plate displacement for the three-dimensional channel with a submerged rectangular plate with an incident wave with period 4\,s.}
    \label{fig:3d_rect}
    
    \captionof{table}{The reflection and transmission coefficients of the propagating modes for the case of a centred rectangular plate in Figure~\ref{fig:3d_rect}.}
    \label{tab:3d_rect}
    \begin{tabular}{c|c|c|c}
    $n$ & 1 & 2 & 3 \\\hline
    $\lvert R_{0n}\rvert$ & $2.76\times 10^{-2}$ & $1.54\times 10^{-5}$ & $2.05\times 10^{-2}$ \\
    $\lvert T_{0n}\rvert$ & $9.99\times 10^{-1}$ & $2.17\times 10^{-5}$ & $1.05\times 10^{-1}$ \\
    \end{tabular}

    \centering
    \includegraphics[trim={6.5cm 0 4cm 0},clip,width=0.49\linewidth]{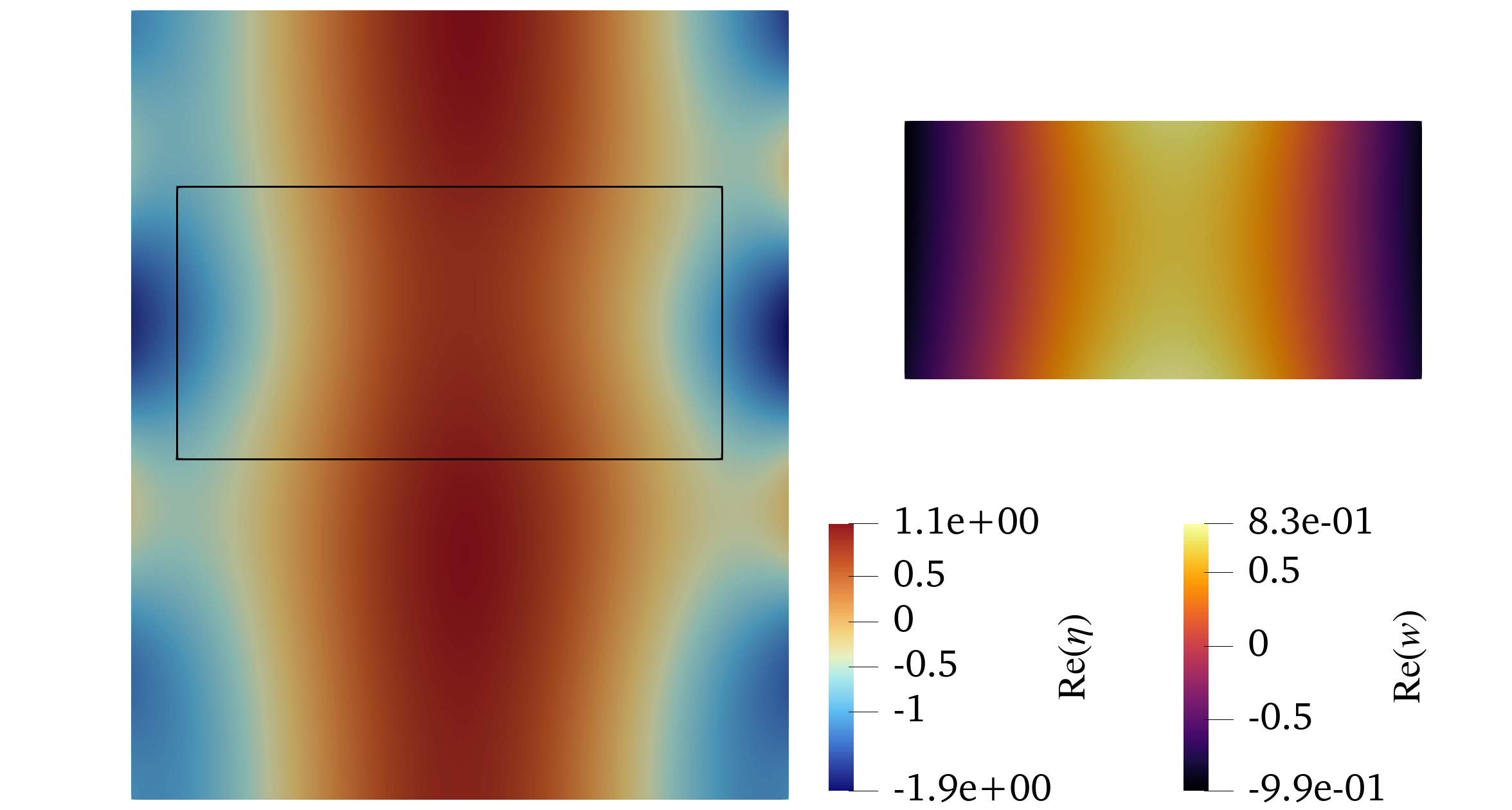}
    \includegraphics[trim={6.5cm 0 4cm 0},clip,width=0.49\linewidth]{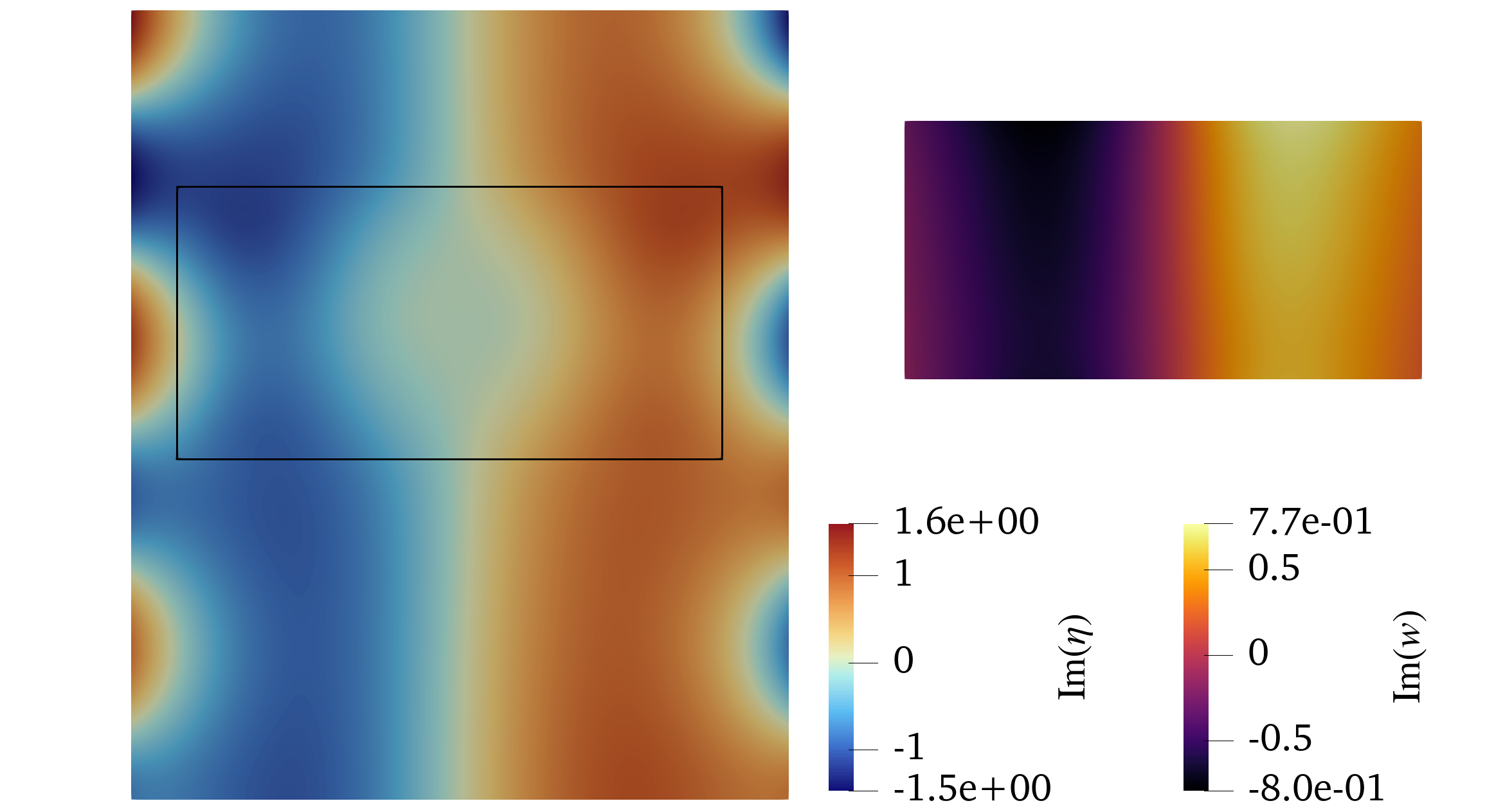}
    \captionof{figure}{Visualisations of the real and imaginary parts of the free-surface displacement and plate displacement for the three-dimensional channel with a submerged rectangular plate that is offset in the $y$ direction with an incident wave with period 4\,s.}
    \label{fig:3d_rect_offset}
    \captionof{table}{The reflection and transmission coefficients of the propagating modes for the case of an offset rectangular plate in Figure~\ref{fig:3d_rect_offset}.}
    \label{tab:3d_rect_offset}
    \begin{tabular}{c|c|c|c}
    $n$ & 1 & 2 & 3 \\\hline
    $\lvert R_{0n}\rvert$ & $2.56\times 10^{-2}$ & $1.59\times 10^{-2}$ & $5.31\times 10^{-3}$ \\
    $\lvert T_{0n}\rvert$ & $9.98\times 10^{-1}$ & $5.19\times 10^{-2}$ & $6.58\times 10^{-2}$ \\
    \end{tabular}
\end{figure}

We also check energy conservation of our solution by computing $\lvert\xi\rvert$ where, for a plane wave, $\xi$ is given by \citep{ALDOSRI2025103493}
\begin{equation}\label{eqn: energy bal 3d}
    \xi\coloneqq\left(\lvert R_{00}\rvert^2+\lvert T_{00}\rvert^2\right)+\frac{1}{2}\sum_{n=1}^q\left(\frac{k_{0n}}{k_{00}}\right)\left(\lvert R_{0n}\rvert^2+\lvert T_{0n}\rvert^2\right)-1,
\end{equation}
where $q$ is the number of propagating modes and $R_{ij}$, $T_{ij}$, and $k_{mn}$ are as in \eqref{eqn: Rij 3d}, \eqref{eqn: Tij 3d}, and \eqref{eqn: kmn 3d}, respectively. Note that $R_{ij}$ and $T_{ij}$ are scaled by $1/\beta$ so that 
$$\left(\lvert R_{00}\rvert^2+\lvert T_{00}\rvert^2\right)+\frac{1}{2}\sum_{n=1}^q\left(\frac{k_{0n}}{k_{00}}\right)\left(\lvert R_{0n}\rvert^2+\lvert T_{0n}\rvert^2\right)\approx 1.$$
Using \eqref{eqn: energy bal 3d}, we find that $\lvert\xi\rvert=3.09 \times 10^{-9}$ and $\lvert\xi\rvert=5.63 \times 10^{-10}$ for the centred and offset plates, respectively. This shows that the solution approximately satisfies energy conservation.

The final example we consider is the case of an annulus-shaped plate with an inner radius of 7\,m and an outer radius of 10\,m. Figure~\ref{fig:3d_meshes annulus} shows the mesh used for this example. Figure~\ref{fig:3d_annulus} visualises the real and imaginary parts of the free-surface displacement and plate displacement for the annulus-shaped plate and in Table~\ref{tab:3d_annulus} we give the reflection and transmission coefficients for the propagating modes. In this case, the reflection and transmission coefficients corresponding to the symmetric mode, $n=2$, are small due to the symmetry of the system. In addition, we find the solution approximately satisfies energy conservation with $\lvert\xi\rvert=1.12 \times 10^{-8}$.
As in the case of the two-dimensional examples, changing the shape of the plate requires no adjustments to the code, other than the mesh generation. This is a significant advantage of the finite element approach. We expect that this will be particularly useful for investigating the effect of the structural shape on the performance of wave energy converters.
\begin{figure}[!t]
    \centering
        \centering
        \includegraphics[trim={6.5cm 0 4cm 0},clip,width=0.49\linewidth]{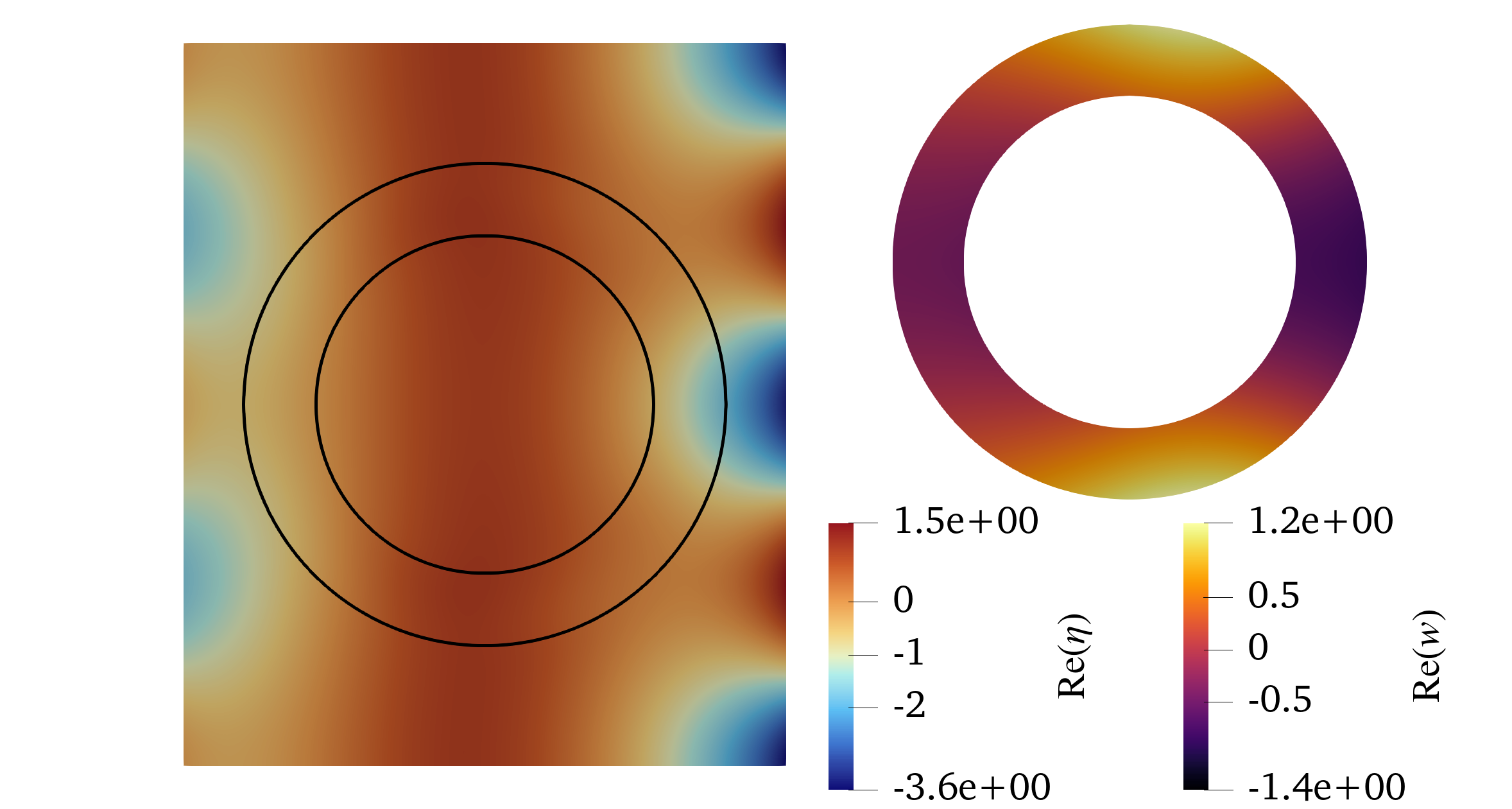}
        \includegraphics[trim={6.5cm 0 4cm 0},clip,width=0.49\linewidth]{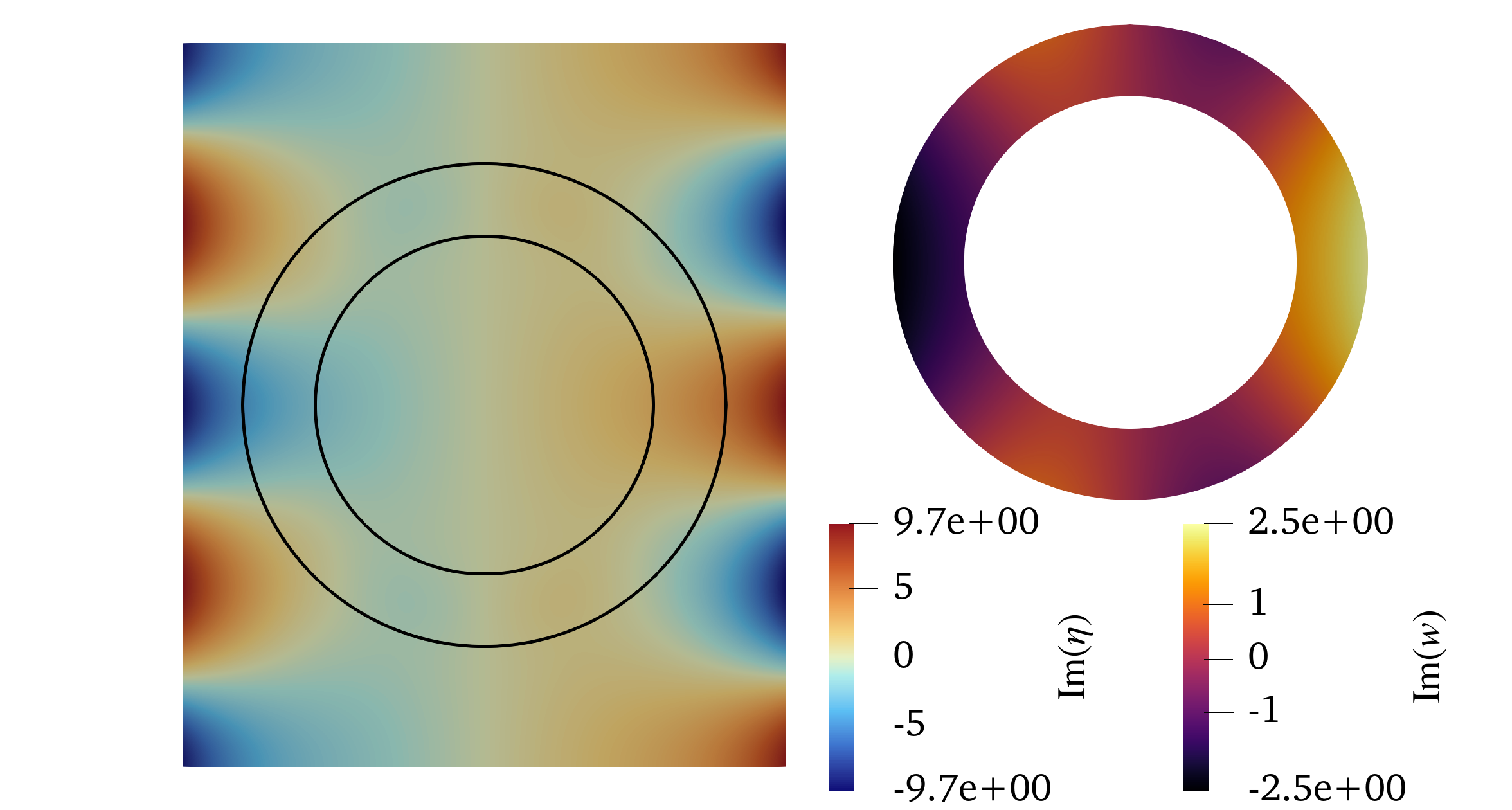}
    \caption{Visualisations of the real and imaginary parts of the free-surface displacement and plate displacement for the three-dimensional channel with a submerged annulus-shaped plate and an incident wave with period 4\,s.}
    \label{fig:3d_annulus}
    \captionof{table}{The reflection and transmission coefficients of the propagating modes for the case of an annulus-shaped plate in Figure~\ref{fig:3d_annulus}.}
	\label{tab:3d_annulus}
    \begin{tabular}{c|c|c|c}
    $n$ & 1 & 2 & 3 \\\hline
	$\lvert R_{0n}\rvert$ & $5.97\times 10^{-2}$ & $5.39\times 10^{-4}$ & $1.62\times 10^{-1}$ \\
    $\lvert T_{0n}\rvert$ & $9.93\times 10^{-1}$ & $5.07\times 10^{-4}$ & $1.02\times 10^{-1}$ \\
    \end{tabular}
\end{figure}

\section{Conclusions}\label{sec: Conclusions}\noindent
In this paper we developed a finite element method for problems involving wave-structure interactions of submerged plates. We formulated the method for the two-dimensional case as well as for the three-dimensional channel. For the fluid part of the problem, we utilised a symmetric discontinuous Galerkin scheme that allowed us to capture the discontinuity in the potential across the plate. To impose the Sommerfeld radiation conditions in a finite domain, we constructed appropriate boundary conditions from the analytic solution outside our computational domain. This yields non-local operators on the fluid boundaries of the computational domain. However, the resulting matrices are sparse compared to the overall size of the discretised system. For the plate, we used a continuous/discontinuous Galerkin (C/DG) method to resolve the 4$^{\rm th}$ order operator without requiring continuously differentiable finite elements. We have shown that the proposed formulation is well-posed by: first, proving that the sesquilinear form is bounded; second, proving the discrete G{\aa}rding inequality; and finally, showing that there exists a unique solution to the problem that is stable.

We implemented our finite element formulation using Gridap \citep{Badia2020,Verdugo2022}, a flexible finite element package written in the Julia programming language. Thanks to Gridap's programming interface, the resulting code is near one-to-one with the mathematical notation and can be used for a range of cases with only minimal changes. The code is freely available, as noted in the \textit{Data availability} statement. Releasing this open-source software allows researchers to further verify our solutions and will facilitate the numerical approximation of more complicated problems.

We validated our method against a hypersingular boundary integral method \citep{Wegert_Wilks_Thamwattana_Challis_Koley_Meylan_2026} for the case of a horizontal submerged plate and found excellent agreement between the two solution methods. As an extension to the two-dimensional case, we considered a submerged slanted plate. Extending the implementation to this case required no adjustments to the code, other than the mesh generation --- a significant benefit of the proposed methodology. Finally, we considered the case of a three-dimensional channel with three different plate configurations: a centred rectangular plate, an offset rectangular plate, and an annulus-shaped plate. We showed that the resulting reflection and transmission coefficients for the propagating waves satisfy energy balance.

We have focused here on hydroelastic problems, which are among the most complicated problems in water-wave scattering and of particular relevance to the modelling of wave energy converters. Such problems are naturally suited to FEM calculations, since this method can readily accommodate both fluid and elastic components with appropriate modifications. Furthermore, the approach is general and 
its application to fixed or rigid structures, or to variable bottom topography, would be a straightforward extension of the present work and would, in fact, represent a simplification of the problem. 
In future, we plan to extend this formulation to a radially-infinite ocean, investigate the extension to piezoelectric wave energy converters, and develop unfitted finite element methods for describing the shape of the submerged plate. The latter will enable shape and topology optimisation of the plate to investigate the effect of the plate's shape on wave energy conversion.

\section*{CRediT authorship contribution statement}\noindent
\textbf{Zachary J Wegert:} Writing -- original draft, Writing -- review and editing, Conceptualisation, Formal analysis, Investigation, Methodology, Software, Validation, Visualisation. \textbf{Oriol Colomés:} Writing -- original draft, Writing -- review and editing, Conceptualisation, Formal analysis, Methodology. \textbf{Michael H Meylan:} Writing -- original draft, Writing -- review and editing, Conceptualisation, Validation, Project administration, Funding acquisition, Resources. \textbf{Ngamta Thamwattana:} Writing -- original draft, Writing -- review and editing, Conceptualisation, Project administration, Funding acquisition, Resources. \textbf{Vivien J Challis:} Writing -- original draft, Writing -- review and editing, Conceptualisation, Project administration, Funding acquisition, Resources, Supervision.

\section*{Declaration of Competing Interest}\noindent
The authors have no competing interests to declare that are relevant to the content of this article.

\section*{Data availability}\noindent
The source code and data for this work is available at \url{https://github.com/zjwegert/Wegert_et_al_2026_SubFEM}.

\section*{Acknowledgement}\noindent
This research was supported by the Australian Government through the Australian Research Council (ARC) Discovery Grant DP240102104. The authors would also like to thank Dr Ben Wilks for many insightful discussions that have helped to improve the manuscript. Oriol Colom\'es gratefully acknowledges the support received by the project DigiOcean4Solar with file number 21225 of the research programme NWO Talent ProgrammeVidi AES 2023 which is financed by the Dutch Research Council (NWO) under the grant ID \hyperlink{https://doi.org/10.61686/OPCTU16570}{https://doi.org/10.61686/OPCTU16570}.

\appendix

\bibliographystyle{elsarticle-num-names} 
\bibliography{main}

\end{document}